\DeclareMathOperator*{\E}{\mathbb{E}}
\DeclareMathOperator*{\Prob}{\mathbb{P}}
\newtheorem{theorem}{Theorem}
\newtheorem{lemma}[theorem]{Lemma}
\newtheorem{corollary}[theorem]{Corollary}
\theoremstyle{definition}
\renewcommand\leq{\leqslant}
\renewcommand\geq{\geqslant}
\renewcommand\le{\leqslant}
\renewcommand\ge{\geqslant}
\begin{document}

\title{Large Sumsets from Small Subsets}
\author{B\'ela Bollob\'as \and Imre Leader \and Marius Tiba}

\address{Department of Pure Mathematics and Mathematical Statistics,
Wilberforce Road,
Cambridge, CB3 0WA, UK, and Department of Mathematical Sciences,
University of Memphis, Memphis, TN 38152, USA}\email{b.bollobas@dpmms.cam.ac.uk}

\address{Department of Pure Mathematics and Mathematical Statistics,
Wilberforce Road, Cambridge, CB3 0WA, UK}\email{i.leader@dpmms.cam.ac.uk}

\address{Department of Pure Mathematics and Mathematical Statistics,
Wilberforce Road, Cambridge, CB3 0WA, UK}\email{mt576@cam.ac.uk}

\thanks{The first author was partially supported by NSF grant DMS-1855745}

\begin{abstract}
In this paper we start to investigate a new body of questions in additive combinatorics.
The fundamental Cauchy--Davenport theorem gives a lower bound on the size of a sumset $A+B$ for subsets
of the cyclic group ${\mathbb Z}_p$ of order $p$ ($p$ prime), and this is just one example of a large
family of results. Our aim in this paper is to investigate what happens if we restrict the number of
elements of one set that we may use to form the sums. Here is the question we set out to answer:
given two subsets, $A$ and $B$, does $B$ have a subset $B'$ of bounded
size such that $A+B'$ is large, perhaps even comparable to the size of $A+B$? In particular,
can we get close to the lower bound of the Cauchy--Davenport theorem?

Our main results show that, rather surprisingly, in many circumstances it is possible to
obtain not merely an asymptotic version of the usual sumset bound, but even the exact bound
itself. For example, in ${\mathbb Z}$, we show that if $A$ and $B$ have size $n$ then there are
three elements $b_1,b_2,b_3 \in B$ such that $|(A+b_1)\cup (A+b_2) \cup (A+b_3)|\ge 2n-1$.
And for ${\mathbb Z}_p$ itself, we show the following: if $A$ and $B$ have size $n$, where
$n \leq p/3$, then there is a subset $B'$ of $B$ of size $c$ such that $|A+B'| \geq 2n-1$. Here
$c$ is an absolute constant. In the inverse direction for this result, we show
that if for
every subset $B'$ of $B$ of size $c$ we have
$|A + B'| \leq 2n-1+r$,
where $r \leq \varepsilon n$ for some absolute constant $\varepsilon$, then $B$ is contained in an
arithmetic progression of size
$n+r$. We also prove `unbalanced' forms
of our results, when the sizes of $A$ and $B$ may differ.

As an application, we prove some
considerable extensions of the Erd\H{o}s-Heilbronn problem. We also present versions in the continuous setting, and give
several open problems.

\end{abstract}

\maketitle

\section{Introduction}\label{intro}

The aim of this paper is to introduce a new direction in the study of sumset sizes, by asking what
happens when we may only use a bounded number of terms from one of the sets. Our main interest is in
sumsets in ${\mathbb Z}_p$, where, as usual, $p$ is a prime number; in fact, throughout
this paper, $p$ will always stand for an arbitrary prime. We start with some
background.

\vspace{5pt}
Cauchy~\cite{Cauchy} was the first to study sums of subsets in ${\mathbb Z}_p$;
over one hundred years later, his result was rediscovered by
Davenport~\cite{Dav1, Dav2}. Their result asserts that if $\emptyset \ne A, B \subset {\mathbb Z}_p$ and
$|A|+B|\le p+1$ then
\begin{equation}\label{BM1}
|A+B|\ge |A|+|B|-1.
\end{equation}
Proving this for sets of integers, without any restrictions on their sizes, is entirely trivial,
and may be viewed as an analogue of the one-dimensional case of the Brunn--Minkowski inequality.

\vspace{5pt}
The Cauchy--Davenport theorem was followed by important
contributions concerning sums of subsets of groups, including ${\mathbb Z}$ itself, by
Mann~\cite{Mann1, Mann2}, Kneser~\cite{Knes}, Vosper~\cite{Vosp1, Vosp2}, Erd\H{o}s and
Heilbronn~\cite{ErdHeil}, Freiman~\cite{Frei-59, Frei-62, Frei-book}, Pl\"unnecke~\cite{Plu-70},
Ruzsa~\cite{Ruz-89}, and others until the 1990s,
when the subject really took off (see e.g. \cite{BSZ, BaBo, GyarMatRu, LevSme, Petr, Ruz-triangle-96, SerZem,
Stan, tao}). For various discrete analogues of the Brunn--Minkowski inequality, see
\cite{BoLe, BoLe-96, GarGro, GonHen, GreenTao, HerIgYep, IgYepZva}  and many other results;
for surveys of the inequality itself, see \cite{Bar, Gard}.

\vspace{5pt}
One should distinguish between `direct' results, giving lower bounds for subset sums, and `inverse'
results, that characterise the cases when the subset sum is close to its minimum (where `close' may be interpreted in
several different ways). Starting with Vosper~\cite{Vosp1}  and Freiman~\cite{Frei-59, Frei-62, Frei-book, Frei-87}, much research has been done on inverse results: see Nathanson~\cite{Nath-first-inverse, Nath-inverse, Nathbook}, Bilu, Lev and Ruzsa~\cite{biluzp}, Bilu~\cite{Bilu}, Nathanson and Tennenbaum~\cite{NathTen}, Breuillard, Green and Tao~\cite{BGT-doubling} and Serra and Z\'emor~\cite{SerZem}, among others.

\vspace{5pt}
Considerable attention has also been given to `restricted' sumsets, meaning sumsets in which
we do not consider the sum of every pair. The best--known example of this is
the Erd\H{o}s--Heilbronn conjecture~\cite{ErdHeil}, proved by
Dias da Silva and Hamidoune~\cite{DiHa}, and then by Alon, Nathanson and Ruzsa~\cite{AlNaRu}. Restricted sumsets sit at the core
of the Balog--Szemer\'edi--Gowers theorem~\cite{BSG1,BSG2}, which is a fundamental tool in additive combinatorics. This notion
has also been explored by several authors including Freiman, Low and Pitman~\cite{FreLowPit99}, Lev~\cite{Lev-00a, Lev-00b, Lev-01}, K\'arolyi~\cite{Kar-05}, Vu and Wood~\cite{VuWood-09}, Tao~\cite{tao}, Griesmer~\cite{griesmers} and Shao~\cite{shao}.

\vspace{5pt}
In this paper we wish to find out what happens if in \eqref{BM1} or its analogues
we are allowed to use only boundedly many elements of $B$. This question is of
interest not only in ${\mathbb Z}_p$, but also in additive groups and even in ${\mathbb Z}$.

\vspace{5pt}
In the rest of this section we describe our main results.

\vspace{10pt}
\subsection{Direct results in $\mathbb{Z}$}
We start in the simplest place, ${\mathbb Z}$ itself. If we are summing just one set $A$ with
itself, then of course with $a_1 = \min A$ and $a_2 = \max A$ we have that the sets $A + a_1$ and
$A + a_2$ meet only at $a_1+a_2$, so that $|A+\{a_1,a_2\}| = 2 |A| - 1$. But with
a sumset $A+B$ of two sets then it is no longer the case that for some two elements $b_1,b_2$ of
$B$ we have that $|A+\{b_1,b_2\}| = |A|+|B| - 1$, even when the sets are the same size.
For example, taking $n$ to be a multiple of $3$, if $A$ is the set $[1,2n/3] \cup [n,4n/3]$ then it is easy to see that
any translate of $A$ by a distance $d$, where $0 \leq d \leq n$, meets $A$ in at least $n/3$
elements. Hence if $B$ is the interval $[0,n]$ then $A$ and $B$ have size $n+1$ and for any $b_1,b_2 \in B$ we have that
$|A+\{b_1,b_2\}| \leq |A|+|B|-n/3$.

\vspace{5pt}
In light of this simple example, it is very surprising that, if we move up to {\em three}
elements of $B$, then in fact we can recover the exact lower bound in \eqref{BM1}. Of course, we
will be taking $|A| \geq |B|$, since if $B$ is much larger than $A$ then the lower bound in
\eqref{BM1} is greater than $3 |A|$, which is the most one could ever obtain by taking the union of
three translates of $A$. This is our first result.

\begin{theorem}\label{intro_three_translates_Z}
Let $A$ and $B$ be finite non-empty subsets of $\mathbb{Z}$ with $|A| \ge |B|$. Then there exist elements
$b_1,b_2,b_3 \in B$ such that
$$|A+\{b_1,b_2,b_3\}| \ge |A|+|B|-1.$$
\end{theorem}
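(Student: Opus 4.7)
My plan is to take $b_1 = \min B$ and $b_3 = \max B$ at the extremes, then hunt for a suitable middle element $b_2 \in B$. The case $|B| \leq 3$ is immediate from the one-dimensional Brunn--Minkowski inequality (take $b_1, b_2, b_3$ to be all of $B$, with repeats if necessary), so assume $m := |B| \geq 4$; by translation $\min A = \min B = 0$, and write $n = |A|$, $d = \max B$, $M = \max A$.

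If $d \geq M$, then $A \subseteq [0,M]$ and $A + d \subseteq [d, d+M]$ meet in at most one point, so $|A \cup (A+d)| \geq 2n - 1 \geq n + m - 1$ using $n \geq m$, and any choice of a third element works. So assume $d < M$. Fix $b_1 = 0$ and $b_3 = d$, and write $k_x = |A \cap (A+x)|$ and $t_b = |A \cap (A+b) \cap (A+d)|$. Inclusion--exclusion gives
$$
|A + \{0,b,d\}| = 3n - k_d - k_b - k_{d-b} + t_b,
$$
so the task reduces to locating $b \in B \setminus \{0,d\}$ with $k_b + k_{d-b} - t_b \leq 2n - k_d - m + 1$. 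Swapping the order of summation produces the identity
$$
\sum_{b \in B} \bigl(k_b + k_{d-b} - t_b\bigr) = \sum_{a \in A \cup (A+d)} r(a),
$$
where $r(a) = |\{(x,y) \in A \times B : x + y = a\}|$. Combining $\sum_a r(a) = nm$, the Brunn--Minkowski bound $|A + B| \geq n + m - 1$, and $|A \cup (A+d)| = 2n - k_d$ (together with $r \geq 1$ on $(A+B) \setminus (A \cup (A+d))$), a short calculation shows that averaging over $b \in B \setminus \{0,d\}$ delivers an admissible $b_2$ provided $k_d \leq n - m + 1$.

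The main obstacle is the structured regime $k_d \geq n - m + 2$: here $A$ decomposes into $c \leq m - 2$ arithmetic progressions of common difference $d$ (``chains''). I would handle this in two stages. First, if some other pair $b_1', b_3' \in B$ has $k_{b_3' - b_1'} \leq n - m + 1$, rerun the averaging argument with that pair. Otherwise $A$ has the same chain structure with respect to \emph{every} difference in $B - B$, which is extremely rigid. In that rigid case I would seek $b \in B \setminus \{0,d\}$ whose residue modulo $d$ lies outside the difference set of the chain bases: such a $b$ makes $A + b$ disjoint from $A \cup (A+d)$, so $|A + \{0,b,d\}| = 2n + c \geq n + m - 1$. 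If no such $b$ exists, then $B$ is forced into the chain-difference set modulo $d$; combined with the chain structure of $A$, this confines $A$ and $B$ to a common arithmetic progression, and after rescaling we land in the case where $A$ is an interval, which is essentially trivial. Turning this structural endgame into a clean case analysis -- accounting for all ways $B \subseteq [0,d]$ of size $m$ can be compatible with a highly periodic $A$ -- is the crux of the proof.
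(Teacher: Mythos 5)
Your generic case is sound: the inclusion--exclusion identity, the double-counting identity $\sum_{b\in B}(k_b+k_{d-b}-t_b)=\sum_{a\in A\cup(A+d)}r(a)$, and the resulting averaging do produce an admissible $b_2$ exactly when $k_d\le n-m+1$ (for $m\ge 4$); I checked the arithmetic. The genuine gap is the structured endgame, and it is not merely a matter of tedium: the final structural claim is false. From ``no $b\in B$ has residue mod $d$ outside $\pi_d(A)-\pi_d(A)$'' you conclude that $A$ and $B$ lie in a common arithmetic progression and that after rescaling $A$ becomes an interval. But take the paper's own motivating example $A=[1,2n/3]\cup[n,4n/3]$, $B=[0,n]$ (so $|A|=|B|$ and $d=n$). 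Here $k_e\ge n/3$ for every nonzero $e\in B-B$, so no pair of elements of $B$ escapes the rigid regime; the chain bases of $A$ reduce mod $d$ to $\{0\}\cup[1,2n/3]$, whose difference set is all of $\mathbb{Z}_n$, so no translate $A+b$ is disjoint from $A\cup(A+d)$; and $A$ lies in the interval $[0,4n/3]$ but is not an interval and does not become one under any rescaling. This example falls through every branch of your case analysis, and the deferred ``crux'' is genuinely unresolved: containment of $\pi_d(B)$ in $\pi_d(A)-\pi_d(A)$ is far too weak to force progression structure, since already a set of about $\sqrt{m}$ residues can have a difference set of size $m-1$ covering $\pi_d(B)$.

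The paper closes exactly this gap by changing what is counted. It too fixes $b_1=0$ and $b_3=d$ and averages over the third element, but the accounting is done fibre-by-fibre modulo $d$: first, $|A\cup(A+d)|\ge|A|+|\pi_d(A)|$, because every fibre meeting $A$ gains at least one point under the shift by $d$; second, for a uniformly random $b\in B\setminus\{d\}$ the expected number of points of $A+b$ landing in fibres \emph{not} met by $A$ (hence automatically new) is at least $|A|\bigl(|B|-1-|\pi_d(A)|\bigr)/(|B|-1)$, since $\pi_d$ is a bijection from $B\setminus\{d\}$ to $\pi_d(B)$. These two bounds sum to at least $|A|+|B|-1$ whatever the value of $|\pi_d(A)|$, so no structural case analysis is needed. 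If you want to rescue your argument, replace ``find $b$ with $A+b$ disjoint from $A\cup(A+d)$'' by this expected count of points of $A+b$ in unoccupied fibres; that single change subsumes both your generic and your rigid case.
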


\vspace{5pt}
Actually, if $B$ is allowed to be {\em larger} than $A$, but only by a given ratio, then
a bounded number of elements of $B$ does still suffice to obtain the bounds in \eqref{BM1}.\\

\begin{theorem}\label{intro_few_translates_Z}
For every $\alpha >0$ there exists $c$ such that, whenever $A$ and $B$ are finite non-empty subsets of
$\mathbb{Z}$ with $|A| \geq \alpha |B|$, there exist elements $b_1, b_2,
\hdots, b_c \in B$ such that
\[
\large|A+\{b_1, \hdots, b_c\}\large| \ge |A|+|B|-1.
\]
\end{theorem}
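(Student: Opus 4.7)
My plan is to reduce Theorem~\ref{intro_few_translates_Z} to Theorem~\ref{intro_three_translates_Z} by partitioning $B$ into boundedly many consecutive blocks and applying the previous theorem to each. Setting $n = |A|$, $m = |B|$, and $k = \lceil m/n \rceil$ (which is at most $\lceil 1/\alpha \rceil + 1$), I sort $B = \{b_1 < \cdots < b_m\}$ and split it into $k$ consecutive blocks $B^{(1)}, \dots, B^{(k)}$ in sorted order, each of size at most $n$. Applying Theorem~\ref{intro_three_translates_Z} to each pair $(A, B^{(j)})$ (which is legitimate since $|A| \ge |B^{(j)}|$) yields a set $T_j \subset B^{(j)}$ with $|T_j| \leq 3$ and $|A + T_j| \geq n + |B^{(j)}| - 1$. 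I then take $B' := T_1 \cup \cdots \cup T_k$, so that $|B'| \leq 3k$, a quantity depending only on $\alpha$; this will be the desired $c = c(\alpha)$.

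The remaining task is to show $|A + B'| \geq n + m - 1$. Summing the block estimates gives $\sum_j |A + T_j| \geq kn + m - k$, so by inclusion--exclusion it suffices to bound the total pairwise overlap of the translates $A + T_j$ by $(k-1)(n-1)$. Since the blocks $B^{(j)}$ are consecutive in sorted order, one expects that only consecutive translates $A + T_j, A + T_{j+1}$ overlap significantly, which reduces the problem to the single estimate
\[
|(A + T_j) \cap (A + T_{j+1})| \leq n - 1
\]
for each $j$. If this overlap bound holds, the clean computation $(kn + m - k) - (k-1)(n-1) = n + m - 1$ completes the argument.

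The main obstacle will be establishing the overlap estimate. When $A$ is an interval of length $n$ the bound is immediate: the translates $A + T_j$ and $A + T_{j+1}$ lie in intervals $[\min T_j, \max T_j + n - 1]$ and $[\min T_{j+1}, \max T_{j+1} + n - 1]$, their intersection has length at most $n - (\min T_{j+1} - \max T_j) \leq n - 1$, and non-consecutive translates are automatically disjoint because each $A + T_j$ has diameter less than $2n$. For general $A$ whose diameter exceeds $n - 1$ both features can fail: consecutive overlaps can be larger and non-consecutive translates may also overlap. To repair this I would try one of three strategies: inspect the proof of Theorem~\ref{intro_three_translates_Z} to arrange that $T_j$ always contains $\min B^{(j)}$ and $\max B^{(j)}$ (so that $A + T_j$ inherits the extremes of $A + B^{(j)}$); adjust the partition so that a selected subcollection of $O(1/\alpha)$ blocks is spatially separated by more than the diameter of $A$, making the corresponding sumsets automatically disjoint; or exploit that whenever an overlap is larger than $n - 1$, the quantities $|A + T_j|$ must correspondingly exceed their tight Cauchy--Davenport lower bound by a compensating amount.
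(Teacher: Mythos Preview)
Your block-decomposition strategy has a genuine gap that you correctly identify but do not close, and none of your three suggested repairs actually works in general. The overlap bound $|(A+T_j)\cap(A+T_{j+1})|\le n-1$ and the disjointness of non-consecutive $A+T_j$ both rely on $A$ having diameter at most $n-1$, i.e.\ essentially on $A$ being an interval. For your first repair, the proof of Theorem~\ref{intro_three_translates_Z} already takes $T_j=\{\min B^{(j)},b_j,\max B^{(j)}\}$, so you get that for free; but it does not control overlaps. For instance, if $A=\{0,\dots,\tfrac{n}{2}-1\}\cup\{n,\dots,\tfrac{3n}{2}-1\}$ and $B=[0,2n-1]$ is split into two blocks of size $n$, the sets $A+T_1$ and $A+T_2$ can overlap in roughly $3n/2$ elements. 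Your second repair requires separating blocks of $B$ by more than $\operatorname{diam}(A)$, which is impossible whenever $\operatorname{diam}(A)>\operatorname{diam}(B)$ (take $A$ sparse with huge diameter and $B$ an interval). Your third repair --- excess in $|A+T_j|$ compensating excess overlap --- has no evident mechanism: Theorem~\ref{intro_three_translates_Z} only guarantees the lower bound $|A+T_j|\ge n+|B^{(j)}|-1$, and you would need to tie any surplus quantitatively to overlaps with \emph{other} blocks' translates, for which there is no obvious coupling.

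The paper takes a completely different route. It explicitly remarks that the proof of Theorem~\ref{intro_few_translates_Z} is ``much more involved'' than that of Theorem~\ref{intro_three_translates_Z} and requires the machinery of Section~4 (Theorem~\ref{implication_shao1-prov}, built on Shao's almost-all Balog--Szemer\'edi--Gowers and hence arithmetic regularity). Concretely, the paper deduces Theorem~\ref{intro_few_translates_Z} from the $\mathbb{Z}_p$ result Theorem~\ref{translates_of_A_Zp}: embed $A,B\subset\mathbb{Z}$ into $\mathbb{Z}_p$ for a prime $p$ large enough that there is no wraparound and $|A|+|B|\le(1-\beta)p$, then invoke Theorem~\ref{translates_of_A_Zp} (itself a consequence of the stability result Theorem~\ref{strong_stab_B_Z_p}, whose proof uses Theorem~\ref{implication_shao1-prov} together with Freiman-type structure in $\mathbb{Z}_p$). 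So the paper's argument for unbalanced sizes is not a direct $\mathbb{Z}$ argument at all; if a purely elementary block proof along your lines exists, it would be a new and interesting contribution, but as written the key step is missing.
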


\vspace{10pt}
\subsection{Direct results in $\mathbb{Z}_p$}
Let us now turn to the deeper question of what happens in the cyclic group ${\mathbb Z}_p$ of prime order. Can we again
recover the exact bounds, this time in the Cauchy--Davenport theorem, if we insist that only
a given number of terms of $B$ may be used? The same examples as in $\mathbb Z$ show that two
translates do not suffice. However, a bounded number of translates is enough. This is again very
surprising, and is the one of the main results of our paper. In a certain sense, it is going far
beyond results like the Erd\H os--Heilbronn conjecture, where we restrict the allowed sums only by
forbidding $a$ to equal $b$ in a sum $a+b$ we use: here on the contrary we only {\em allow} a fixed number
of members of $B$ to appear in the sums.

\begin{theorem}\label{translates_of_A_Zp_equality}
There exists a universal constant $c$ such that the following holds. Whenever
$A$ and $B$ are subsets of $\mathbb{Z}_p$ with $|A|=|B|\leq p/3$,
there exist $b_1, \hdots, b_c \in B$ such that $$|A+\{b_1, \hdots, b_c\} |\geq |A|+|B|-1.$$
\end{theorem}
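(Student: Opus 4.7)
I would proceed by a dichotomy on $|A+B|$, combining a structural (inverse) argument with the integer result Theorem~\ref{intro_few_translates_Z}. Fix a small absolute constant $\delta>0$, to be chosen at the end.

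\emph{Case 1 ($|A+B|\le(2+\delta)n$).} In this regime the sumset is close to the Cauchy--Davenport minimum, so a Freiman/Bilu--Lev--Ruzsa inverse theorem in $\mathbb{Z}_p$ applies (its hypotheses are satisfied because $|A|=|B|\le p/3$ and the doubling is nearly minimal). This produces a common difference $d$ and arithmetic progressions $P_A\supseteq A$, $P_B\supseteq B$ with that difference and lengths at most $(1+C\delta)n$ for an absolute $C$. The Freiman $2$-isomorphism $x\mapsto d^{-1}(x-\text{shift})$ turns $P_A, P_B$ into intervals of $\mathbb{Z}_p$; for $\delta$ small enough, $|P_A|+|P_B|<p$, so the whole configuration lifts to $\mathbb{Z}$ without wraparound in the sumset. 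I then apply Theorem~\ref{intro_few_translates_Z} (with $\alpha=1$) to the lifted integer sets, obtaining $b_1,\ldots,b_{c'}\in B$ with $|A+\{b_1,\ldots,b_{c'}\}|\ge 2n-1$ in $\mathbb{Z}$, and hence in $\mathbb{Z}_p$.

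\emph{Case 2 ($|A+B|\ge(2+\delta)n$).} I would iterate a greedy selection: set $S_1=A+b_1$ for any $b_1\in B$ and at each step pick $b_{i+1}\in B$ maximising the marginal gain $|(A+b_{i+1})\setminus S_i|$, where $S_i=A+\{b_1,\ldots,b_i\}$. Writing $r(x)=|B\cap(x-A)|$ for the multiplicity function (so $\sum_x r(x)=n^2$ and $\operatorname{supp} r=A+B$), the marginal gain on average equals $\sum_{x\notin S_i}r(x)/(n-i)$ and hence is at least that quantity. The aim is to show this is at least a constant fraction (depending only on $\delta$) of the residual gap $2n-1-|S_i|$, so that a bounded number of iterations $c=c(\delta)$ suffices.

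\emph{Main obstacle.} Case~2 is the crux. The trivial bound $\delta_i\ge(|A+B|-|S_i|)/(n-i)$, which follows from using only $r\ge 1$ on its support, forces $\Omega(n)$ iterations rather than a bounded number. Extracting a constant-fraction per-step gain requires a second-moment refinement: combining $|A+B|\ge(2+\delta)n$ with the additive-energy identity $\sum_x r(x)^2 = E(A,B)$ and Cauchy--Schwarz to control how concentrated the multiplicity function may be, so that the greedy $b_{i+1}$ captures substantially more than the naive mean. If greedy stalls at some intermediate step, the ``stuck'' part of $B$ is forced into arithmetic structure and the argument feeds back into Case~1 at a slightly smaller scale. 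Tuning $\delta$ so that the constants produced by Case~1 and Case~2 match into a single absolute $c$ is the final calibration.
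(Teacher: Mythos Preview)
Your Case~1 is sound: when $|A+B|\le(2+\delta)n$ with $n\le p/3$ and $\delta$ small, the $3k-4$ theorem in $\mathbb{Z}_p$ (Theorem~\ref{3k-4_Z_p}) applies, the configuration rectifies to $\mathbb{Z}$, and Theorem~\ref{intro_three_translates_Z} finishes with three points. One caveat: you cite Theorem~\ref{intro_few_translates_Z}, but in this paper that theorem is \emph{deduced from} Theorem~\ref{translates_of_A_Zp_equality}, so invoking it here is circular. Fortunately for $\alpha=1$ you only need Theorem~\ref{intro_three_translates_Z}, which is proved directly.

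Case~2 is where the argument breaks down. You correctly observe that the naive greedy bound $\delta_i\ge(|A+B|-|S_i|)/n$ closes the gap to $2n-1$ only geometrically, requiring $\Theta(n\log(1/\delta))$ steps rather than $O_\delta(1)$. The proposed fix---a second-moment/energy refinement---does not work as stated: knowing $|A+B|\ge(2+\delta)n$ gives an \emph{upper} bound $E(A,B)\le n^3$ trivially and a \emph{lower} bound $E(A,B)\ge n^4/|A+B|$ that is weakest precisely when $|A+B|$ is large, so Cauchy--Schwarz on $r(x)$ pushes in the wrong direction. More fundamentally, the hypothesis $|A+B|\ge(2+\delta)n$ does not propagate to the large subsets any averaging argument implicitly works with: if $A=[1,n-1]\cup\{M\}$ then $|A+B|$ is of order $3n$, yet after discarding one point $|A^*+B^*|$ drops to $2n-O(1)$. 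So ``large sumset'' can be entirely due to a sparse sprinkle, and once you pass to $A^*,B^*$ you are back in a Case-1 situation---but now for sets that differ from $A,B$ by up to $\varepsilon n$ points, and your Case-1 argument (rectify, apply Theorem~\ref{intro_three_translates_Z}) says nothing about those missing points. The phrase ``feeds back into Case~1 at a slightly smaller scale'' names this obstacle without resolving it.

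The paper does not split on $|A+B|$. Instead it proves the stability statement Theorem~\ref{strong_stab_B_Z_p} (of which the present theorem is the case $r<0$), roughly as follows: Theorem~\ref{implication_shao1-prov}, built on Shao's almost-all Balog--Szemer\'edi--Gowers theorem, gives $c$ points with union at least $(1-\varepsilon)|A^*+B^*|$; combined with Freiman in $\mathbb{Z}_p$ this forces $A$ and $B$ to be $\gamma n$-close to arithmetic progressions with the same step (Theorem~\ref{weak_stab_Zp}); then a bespoke combinatorial argument (Theorem~\ref{CD_technical}, in particular Lemmas~\ref{lem27}--\ref{lem30}) handles both the points of $A$ outside its progression and the residual structure inside, to recover the exact bound $|A|+|J|-1$. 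That last step---upgrading ``$(1-\varepsilon)(2n)$'' to ``$2n-1$'' in the presence of a small exceptional set---is the genuine work your outline is missing.
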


\vspace{5pt}
It would be fascinating to know if the number $c$ of summands can be taken to be 3, if the sizes of $A$ and $B$ are a
sufficiently small multiple of $p$.

\vspace{5pt}
More generally, we may allow the sizes of $A$ and $B$ to approach $p/2$, and also their sizes need
not be the same. Note that we {\em have} to let the number of allowed summands in $B$ increase as
the sum of the sizes of $A$ and
$B$ gets closer to $p$: this is easily seen if we choose $A$ to be a random subset.
Indeed, if $A$ is a random subset of $\mathbb{Z}_p$ of size $(1/2 - \beta) p$ and $c$ is fixed then
with high probability we have that for {\em any} $b_1, \hdots, b_c$ the union
$\cup_{1\leq i\leq c}(A+b_i)$ has size about $(1-(1/2 + \beta)^c))p$, and this is smaller than
$(1-3\beta)p$ if $\beta$ is small. In other words, for such an $A$, and for any $B$ at all of size
$(1/2 - \beta) p$, we have that there do not exist $c$ points of $B$ whose sumset with $A$ has size
even close to $|A| + |B|$.

\begin{theorem}\label{translates_of_A_Zp}
For all $\alpha, \beta >0$ there exists a constant $c$ such that the
following holds. Whenever $A$ and $B$ are subsets of $\mathbb{Z}_p$ with
$\alpha |B| \leq |A| \leq \frac{1}{\alpha}|B|$ and $|A|+|B| \leq (1-\beta)p$, there
exist $b_1, \hdots, b_c \in B$ such that
\[
|A+\{b_1, \hdots, b_c\} |\geq |A|+|B|-1.
\]
\end{theorem}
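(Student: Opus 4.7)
The strategy is to reduce the problem in $\mathbb{Z}_p$ to the integer version, Theorem~\ref{intro_few_translates_Z}, via a Freiman-type rectification, after isolating a ``dense'' case where $A+B$ already covers most of $\mathbb{Z}_p$. The hypothesis $|A|+|B| \le (1-\beta)p$ is exactly what will provide the room to either lift $A,B$ to $\mathbb{Z}$ or run a greedy averaging argument.

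\textbf{Step 1: Dichotomy.} By symmetry, assume $|A| \ge |B|$. Fix a threshold $\gamma = \gamma(\beta) \in (0, \beta)$ (for instance $\gamma = \beta/2$). Split into two cases depending on whether $|A+B| \le (1-\gamma)p$ or $|A+B| > (1-\gamma)p$.

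\textbf{Step 2: Rectification case.} When $|A+B| \le (1-\gamma)p$, I would produce a multiplicative rotation $\lambda \in \mathbb{Z}_p^*$ and a translation under which $\lambda A$ and $\lambda B$ lift naturally to subsets $A^*, B^* \subseteq \mathbb{Z}$ with $|A^* + B^*| = |A+B|$, so that the map $A \cup B \to A^* \cup B^*$ is a Freiman $2$-isomorphism. Apply Theorem~\ref{intro_few_translates_Z} to $A^*, B^*$ (which still satisfy $|A^*| \ge \alpha |B^*|$) to obtain $c_1 = c_1(\alpha)$ elements of $B^*$ whose union of translates of $A^*$ has size at least $|A|+|B|-1$; pull back through $\lambda^{-1}$.

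\textbf{Step 3: Dense case.} When $|A+B| > (1-\gamma)p$ we have
\[
|A+B| - (|A|+|B|-1) \ge (\beta - \gamma)p,
\]
giving substantial slack above the Cauchy--Davenport bound. Run a greedy selection: at step $k$, choose $b_{k+1} \in B$ to maximise $|(A+b_{k+1}) \setminus (A+\{b_1, \ldots, b_k\})|$. The average of this marginal gain over $b \in B$ equals
\[
|A| - \frac{1}{|B|} \sum_{t \in A+B'} r_{A,B}(t),
\]
where $r_{A,B}(t) = |\{(a,b) \in A \times B : a+b=t\}|$ and $B' = \{b_1,\ldots,b_k\}$. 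Because $\sum_t r_{A,B}(t) = |A|\,|B|$ is spread over at least $(1-\gamma)p$ points, while $|A+B'| < |A|+|B|-1 \le (1-\beta)p$, the partial sum over $A+B'$ is bounded away from $|A|\,|B|$, forcing a marginal gain of $\Omega_{\alpha,\beta}(|A|)$. After $c_2 = O_{\alpha,\beta}(1)$ steps the target is reached. Setting $c = \max(c_1, c_2)$ concludes the argument.

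\textbf{Main obstacle.} The critical difficulty is the rectification in Step~2: standard Freiman--Ruzsa rectification guarantees a Freiman isomorphism to $\mathbb{Z}$ only when $|A+B| \le K(|A|+|B|)$ with $|A|+|B| \le p/K$ for some small $K$, conditions strictly stronger than $|A+B| \le (1-\gamma)p$. Circumventing this likely requires a two-step approach: pass to a dense subset of $A \cup B$ where the doubling is controlled (via Pl\"unnecke--Ruzsa or a Balog--Szemer\'edi--Gowers-type lemma), rectify on that subset, and handle the residual elements through the integer version separately; alternatively, use inverse theorems in $\mathbb{Z}_p$ of Freiman $3k-4$ type (Serra--Z\'emor, Hamidoune--R\o dseth) to extract an arithmetic-progression skeleton that one can exploit directly. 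A secondary point of care is the averaging in Step~3, where the marginal-gain lower bound must be made uniform in $k$ so that $c_2$ depends only on $\alpha$ and $\beta$.
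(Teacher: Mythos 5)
Your proposal has two genuine gaps, one logical and one substantive. The logical one: Step 2 invokes Theorem~\ref{intro_few_translates_Z} as its engine, but in the paper that theorem is itself \emph{deduced from} Theorem~\ref{translates_of_A_Zp} (via Theorem~\ref{translates_of_A_Zp_equality}); no independent proof of the integer statement with $|A|<|B|$ is given or known to be elementary, so the reduction is circular as it stands. The substantive gap is the one you flag as the ``main obstacle'', but it is not a side issue to be circumvented --- it is essentially the whole theorem. The hypothesis $|A+B|\le(1-\gamma)p$ carries no doubling information whatsoever (the sumset may have size $|A|^{3/2}$ or $|A|^{2}$ while still being at most $(1-\gamma)p$), and every known rectification principle (Bilu--Lev--Ruzsa, Freiman--Green--Ruzsa type) requires bounded doubling together with a strong upper bound on $|A|$ in terms of $p$. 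The correct dichotomy is therefore not ``is $A+B$ dense in $\mathbb{Z}_p$'' but ``are $A$ and $B$ both within $O(\varepsilon\min(|A|,|B|))$ in symmetric difference of arithmetic progressions with a common difference''. That is what the paper proves (Theorem~\ref{weak_stab_Zp}), using Shao's almost-all Balog--Szemer\'edi--Gowers theorem (Theorem~\ref{thm_shao}, packaged as Theorem~\ref{implication_shao1-prov}) together with the $\mathbb{Z}_p$ analogue of Freiman's $3k-4$ theorem; in the unstructured case a random $c$-subset of $B$ already beats $|A|+|B|-1$ by a constant-fraction margin, with no lifting to $\mathbb{Z}$ at all.

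Step 3 is also unjustified as written. That $r_{A,B}$ is supported on at least $(1-\gamma)p$ points does not prevent its mass from concentrating on a set of size less than $|A|+|B|-1$. Take $A=B=[n]\cup R$ with $n=p/5$ and $R$ random of size $\varepsilon n$: then $|A+B|>(1-\gamma)p$, yet all but an $O(\varepsilon)$-fraction of the mass of $r_{A,B}$ lies on $[2,2n]$, so once $A+B'\supseteq[2,2n]$ (already after $B'=\{1,n\}$, at which point $|A+B'|$ is still below $|A|+|B|-1$) the average marginal gain is $O(\varepsilon|A|)$, not $\Omega_{\alpha,\beta}(|A|)$. Making a greedy or averaging argument terminate in boundedly many steps again requires the Shao-plus-Freiman machinery, not the support-size observation. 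Finally, even after both sets are known to be within $\gamma\min(|A|,|B|)$ of intervals, the hardest part of the paper remains: since $B$ may omit up to $\gamma|B|$ points of its convex hull, no random sample and no pair of endpoints achieves the exact bound, and the paper needs the bespoke family $\mathcal{F}\subset B^{(c)}$ of Theorem~\ref{CD_technical} (the three-block, fibre-by-fibre construction of Lemmas~\ref{lem25} and~\ref{lem30}) to reach $|A|+|B|-1$ rather than $|A|+|B|-1-O(\gamma|B|)$. None of that work is present in the proposal.
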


\vspace{5pt}
Interestingly, if $A$ is larger than a fixed (sufficiently large) multiple
of the size of $B$ then we do have that
{\em three} translates are enough.\\

\begin{theorem}\label{intro_general_three_translates_Zp}
For every $\beta>0$ there exists $\alpha>0$ such that the following holds.
Whenever $A$ and $B$ are non-empty subsets of
$\mathbb{Z}_p$ with $|B| \leq \alpha |A|$ and $|A|+|B| \leq (1-\beta)p$,
there exist
elements $b_1,b_2,b_3 \in B$ such that
$$|A+\{b_1, b_2,b_3\}| \ge |A|+|B|-1.$$
\end{theorem}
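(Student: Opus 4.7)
The plan is to prove the result via a dichotomy on the size of $|A+B|$, analogous in spirit to the $\mathbb{Z}$ argument for Theorem~\ref{intro_three_translates_Z} but with structural arguments replacing order-theoretic ones.

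First I would split into two regimes, \emph{large-sumset} and \emph{small-sumset}, separated by a threshold $C|B|^2$ where $C = C(\beta)$ is chosen suitably. In the large-sumset regime $|A+B| \geq |A| + C|B|^2$, a greedy-averaging argument suffices. Fix $b_1 \in B$ arbitrarily and put $S_1 = A + b_1$. For $i = 2, 3$, choose $b_i \in B$ maximising $|S_i| := |S_{i-1} \cup (A + b_i)|$. Averaging over $B$ gives $|S_i| - |S_{i-1}| \geq (|A+B| - |S_{i-1}|)/|B|$, because every element of $(A+B) \setminus S_{i-1}$ lies in some $A + b$. Iterating for three steps yields $|A+B| - |S_3| \leq (|A+B| - |A|)(1 - 1/|B|)^3$, and a routine calculation confirms $|S_3| \geq |A| + |B| - 1$ provided $|A+B| - |A| \gtrsim |B|^2$.

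In the small-sumset regime $|A+B| < |A| + C|B|^2$, the hypothesis $|B| \leq \alpha |A|$ gives $|A+B|/|A| \leq 1 + C\alpha|B|$, which is close to $1$ for small $\alpha$. A quantitative Freiman-type inverse theorem in $\mathbb{Z}_p$ (whose density assumption is supplied by $|A| + |B| \leq (1-\beta)p$) then forces $A$ into an arithmetic progression of length at most $|A| + O(|B|)$. After an affine transformation of $\mathbb{Z}_p$, this progression becomes an interval $[0, L-1] \subset \mathbb{Z}_p$ with $L$ strictly smaller than $p$, so the relevant portion of the problem embeds into $\mathbb{Z}$. Since $|A| \geq |B|$ (as $\alpha \leq 1$), Theorem~\ref{intro_three_translates_Z} then supplies three elements of $B$ with the desired property.

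The main obstacle lies in the small-sumset regime. While $A$ is forced into a short interval of $\mathbb{Z}_p$, the set $B$ could a priori be spread widely around the cycle, so one must select $b_1, b_2, b_3$ so that $A + \{b_1, b_2, b_3\}$ does not wrap modulo $p$. The hope is that if $B$ were spread widely, then $|A+B|$ would have to be large, contradicting the small-sumset hypothesis; making this quantitative---perhaps through an auxiliary use of Kneser's theorem or the Pl\"unnecke--Ruzsa inequality to show that $B$ itself is forced into a short arithmetic progression---constitutes the technical heart of the argument.
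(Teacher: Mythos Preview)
Your large-sumset regime is sound: the greedy covering argument does give $|S_3|\ge |A|+|B|-1$ once $|A+B|-|A|\gtrsim |B|^2/2$ (there is a small slip --- after fixing $b_1$ you perform only two greedy steps, so the exponent is $2$, not $3$ --- but the conclusion survives).

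The small-sumset regime, however, has a genuine gap, and it is more serious than the wraparound issue you flag at the end. The hypothesis $|A+B|\le |A|+C|B|^2$ is far too weak for any Freiman-type inverse theorem of the kind you invoke. Results such as the $3k-4$ theorem in $\mathbb{Z}_p$ (Theorem~\ref{3k-4_Z_p}) require the excess $r=|A+B|-|A|-|B|+1$ to be small compared with $\min(|A|,|B|)=|B|$, whereas your threshold permits $r\sim C|B|^2$; no known statement yields ``$A$ lies in a progression of length $|A|+O(|B|)$'' from that. Pl\"unnecke--Ruzsa does not rescue the situation either: the doubling constant $K=1+C|B|^2/|A|$ only gives $|B-B|\lesssim K^2|A|\approx |A|$, which says nothing useful about $B$. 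And there is a separate obstruction you have not addressed: $|A|$ may be as large as $(1-\beta)p$, and such a set cannot sit inside any progression short enough for the embedding into $\mathbb{Z}$ to make sense.

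The paper's route is different in kind. It argues by contradiction from the \emph{two-translate} failure $|A+\{b_1,b_2\}|\le |A|+(1+\varepsilon)|B|-1$, which says $|(A+d)\setminus A|\le (1+\varepsilon)|B|$ for every $d\in B-B$. Iterating this over $n(B-B)$ and exploiting that $\bigcup_n n(B-B)$ eventually covers $\mathbb{Z}_p$ yields, via a counting argument (Theorem~\ref{stab_2pts}), that $|B-B|$ is close to $2|B|$; now the $3k-4$ theorem in $\mathbb{Z}_p$ applies to $B$ and places it inside a short progression $Q$. The set $A$ is handled in the complementary way: one locates a progression $P$ of size $\sim 2^{10}|B|$ with $|A\cap P|$ \emph{small} (not $A\subset P$ --- this is what allows $|A|$ to be large). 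The contradiction is then obtained by a tailored three-translate construction for this configuration (Theorem~\ref{general_3pts}), not by reducing to Theorem~\ref{intro_three_translates_Z}.
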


(We mention in passing that this result actually means that, in Theorem~\ref{translates_of_A_Zp}, the condition
that $|A| \leq \frac{1}{\alpha}|B|$ may be removed.)

\vspace{5pt}
An important case of this theorem is when $B$ is an interval, i.e. a set of the form $[x,y]= \{x, x+1, \dots , y\} \subset {\mathbb Z}_p$.
One is tempted to imagine that in this situation everything is much easier, but this is not the case,
and indeed a substantial part of our approach to the
various results in the paper consists of first proving the result when $B$ is close to an interval
(which is usually a large part of the overall work) and then seeing how, if at all, the proof
can be modified for general $B$. When $B$ is an interval we also recover the minimal possible number of
translates of $A$.

\begin{theorem}\label{intro_four_translates_Zp}
Let $A$ and $B$ be non-empty subsets of $\mathbb{Z}_p$ with
$ |B| \leq |A| \leq 2^{-20}p$ and $B$ an interval.
Then there exist elements $b_1,b_2,b_3 \in B$
such that $$|A+\{b_1, b_2,b_3\}| \ge |A|+|B|-1.$$
\end{theorem}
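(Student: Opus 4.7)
Translate so that $B = [0, n-1]$, and write $m = |A|$, $n = |B|$. The overall plan is a dichotomy based on whether $A + B$ covers all of $\mathbb{Z}_p$.

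\emph{Unwrapping case.} Suppose some cyclic gap of $A$ has length at least $n$; equivalently, $A + B \ne \mathbb{Z}_p$ (the gap interior of length $\ge n$ in $A$ contains a subinterval of length $\ge 1$ disjoint from all translates $A+i$ with $i \in B$). Cutting $\mathbb{Z}_p$ open at any point of $\mathbb{Z}_p \setminus (A+B)$ embeds $A$ and $A+B$ inside an interval of $\mathbb{Z}$ of length less than $p$, so sumsets in $\mathbb{Z}_p$ and $\mathbb{Z}$ coincide for subsets of $A$ paired with subsets of $B$. Because $|A| \ge |B|$, Theorem~\ref{intro_three_translates_Z} applied in $\mathbb{Z}$ delivers the required $b_1, b_2, b_3 \in B$.

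\emph{Dense case.} Otherwise every cyclic gap of $A$ is less than $n$, so $A + B = \mathbb{Z}_p$; summing the $m$ gaps gives $nm > p$, and combined with $m \le 2^{-20} p$ this forces $n > 2^{20}$ and $p > 2^{40}$. Write $f(d) = |A \cap (A+d)|$ and choose $b_1 = 0$. Inclusion--exclusion gives
\[
\bigl|A \cup (A+s) \cup (A+t)\bigr| \ge 3m - f(s) - f(t) - f(t-s),
\]
so it is enough to find $1 \le s < t \le n-1$ with $f(s) + f(t) + f(t-s) \le 2m - n + 1$. I plan to control the heavy-shift set $H = \{d \in [1, n-1] : f(d) > (2m - n + 1)/3\}$ using the global identity $\sum_d f(d) = m^2$ and an additive-energy estimate. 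When $|H|$ is a small fraction of $n$, a union bound on the ``bad'' pairs --- those with $s$, $t$, or $t-s$ in $H$ --- leaves a good pair by direct counting. When $|H|$ is large, a Freiman- or Vosper-type argument forces $A$ to contain a long arithmetic progression of some small common difference $d^* \in [1, n-1]$, and one then selects $b_1, b_2, b_3 \in B$ whose pairwise differences avoid multiples of $d^*$; this is feasible because $n$ dominates $d^*$.

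The principal obstacle is the dense case in the boundary regime $m \approx n \approx \sqrt{p}$: here the raw heavy-shift count is too crude to win directly, while a clean Freiman conclusion needs stronger hypotheses than the crude lower bound on $|H|$ supplies. I expect to bridge this by an iterative scheme --- removing a few extreme elements of $A$ (for example, the endpoints of the AP detected in the structured regime) to reopen a cyclic gap of length at least $n$ and fall back to the unwrapping case at the cost of a slight decrease in $m$. The constant $2^{-20}$ should enter precisely as the slack needed for this iteration to close.
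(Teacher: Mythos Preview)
Your unwrapping case is correct and clean: if $A$ has a cyclic gap of length at least $n=|B|$, one can lift to $\mathbb{Z}$ and invoke Theorem~\ref{intro_three_translates_Z}. The difficulty is entirely in your dense case, and there the proposal has a genuine gap.

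First, the heavy-shift bound is not strong enough in the critical regime $m\approx n$. From $\sum_{d\ne 0} f(d)=m^2-m$ you get $|H|\cdot\frac{2m-n+1}{3}\le m^2-m$, hence $|H|\lesssim 3m$; when $m$ and $n$ are comparable this is worse than the trivial bound $|H|\le n-1$, so the ``small $|H|$'' branch never fires. There is no reason in the dense case for $\sum_{d=1}^{n-1}f(d)$ to be substantially smaller than the global sum, so the counting approach stalls exactly where the problem is hardest.

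Second, the iterative fallback does not close. Removing $k$ points of $A$ to manufacture a gap of length $\ge n$ and then applying Theorem~\ref{intro_three_translates_Z} yields only $|A'+\{b_1,b_2,b_3\}|\ge (m-k)+n-1$, whereas you need $m+n-1$. The $k$ deleted points need not contribute $k$ new sums under the three chosen translates, so the deficit is not recovered. Nor is it clear that ``a few'' removals suffice: in the dense case the average gap is $(p-m)/m<n$, and merging $k+1$ consecutive gaps gives length roughly $(k+1)(p-m)/m$, forcing $k\gtrsim mn/p-1$, which can be large.

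The paper sidesteps the dichotomy altogether. Since $|A|\le 2^{-20}p$, a random interval $I$ of length $2^{10}|B|$ satisfies $\E|A\cap I|\le 2^{-10}|B|$, so some such $I$ meets $A$ in at most $2^{-10}|B|$ points. With $J=B$ one is then in the setting of Theorem~\ref{general_3pts}, which carries out a careful five-block decomposition of $I$ (two outer blocks of width $|J|$, a central block of width $|J|$ chosen to be sparse in $A$, and two buffer blocks) and combines the $\mathbb{Z}$-estimates of Section~3 on $A\setminus I$ with a direct analysis of the tiny piece $A\cap I$. The role of the constant $2^{-20}$ is precisely to guarantee the sparse window $I$; it is not, as you conjectured, slack for an iteration.
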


\vspace{10pt}
\subsection{Inverse results in $\mathbb{Z}_p$}
We also prove inverse theorems for our results in ${\mathbb Z}_p$ -- where by `inverse' we as usual
mean statements of the form `what happens if the inequality is close to being tight'. (These are also often
known as `stability' results.) These show that, unless the
lower bounds hold `with room to spare', our sets must be highly structured, and in fact be very close to
arithmetic progressions.

One of our main results is the following, which may be viewed
as an extension of Freiman's $3k-4$ theorem in this group where we demand that only a bounded number of
summands from $B$ are used.

\begin{theorem}\label{stability_in_Zp_for_intro}
  For all $\alpha,\beta>0$ there exist constants $c$ and $\varepsilon>0$ such that the following holds.
  Let $A$ and $B$ be subsets of $\mathbb{Z}_p$ of size at least $2$,
  with $\alpha |B| \leq |A| \leq
  \alpha^{-1}  |B|$ and
  $|A|+|B|\leq (1-\beta)p$. Suppose that for any $c$ elements
  $b_1, \hdots, b_c \in B$ we have
$$|A+\{b_1, \hdots, b_c\}|\leq |A|+|B|-1+r$$
where $r \leq \varepsilon |B|$. Then $B$ is contained in an arithmetic progression of size
$|B|+r$.
\end{theorem}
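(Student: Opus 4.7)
The plan is to combine Theorem~\ref{translates_of_A_Zp} with a Freiman $3k-4$-type stability result for Cauchy--Davenport in $\mathbb{Z}_p$ (in the style of Bilu--Lev--Ruzsa), and to propagate the resulting AP structure from a bounded-size witness to all of $B$.

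The first step is to invoke Theorem~\ref{translates_of_A_Zp} with parameters $\alpha,\beta$ to obtain a constant $c_0=c_0(\alpha,\beta)$ and a $c_0$-subset $B_0\subseteq B$ with $|A+B_0|\geq|A|+|B|-1$. Choosing $c\geq c_0$ in the theorem statement and padding $B_0$ (if necessary) to a $c$-subset, the hypothesis yields $|A+B_0|\leq|A|+|B|-1+r$, so $A+B_0$ is within $r$ of the Cauchy--Davenport minimum for the full pair $(A,B)$.

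The intermediate goal is to upgrade this to a bound on the full sumset, namely $|A+B|\leq|A|+|B|-1+Cr$ for some $C=C(\alpha,\beta)$. Take $B_0$ to maximise $|A+B_0|$ among $c$-subsets. For any $b\in B$ and any $b'\in B_0$, applying the hypothesis to the $c$-subset $(B_0\setminus\{b'\})\cup\{b\}$ gives
\[
|A+B_0|-u(b')+|(A+b)\setminus(A+B_0)|\leq|A|+|B|-1+r,
\]
where $u(b')$ denotes the unique contribution of $b'$ to $A+B_0$. Minimising over $b'\in B_0$ (using that some element of the witness set $B_0$ must have small unique contribution) bounds $|(A+b)\setminus(A+B_0)|$, and an appropriate covering argument, exploiting that the hypothesis holds for \emph{every} $c$-subset rather than just a single one, then yields the desired bound on $|A+B|$.

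With $|A+B|$ close to its Cauchy--Davenport minimum, I would invoke the stability result to obtain APs $P_A\supseteq A$ and $P_B\supseteq B$ of common difference $d$ with $|P_B|\leq|B|+O(r)$; the balance $\alpha|B|\leq|A|\leq\alpha^{-1}|B|$ and density $|A|+|B|\leq(1-\beta)p$ ensure the hypotheses of the stability theorem for sufficiently small $\varepsilon$. To sharpen $O(r)$ to exactly $r$, use the now-known AP structure of $A$ and $B$ to compute $|A+B_0|$ in terms of the ``gaps'' of $B$ inside $P_B$ and compare with the hypothesis $|A+B_0|\leq|A|+|B|-1+r$. The main obstacle is the conversion from a hypothesis that is genuinely local (about $c$-subsets) into a bound on $|A+B|$ itself: naive uses of maximality give bounds weaker than needed by a factor of order $|B|/c$, so one must combine the structural features of the witness $B_0$ arising from the direct theorem with the global strength of the hypothesis. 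Turning the resulting $O(r)$ into exactly $r$ at the end will also require careful arithmetic with the AP structure.
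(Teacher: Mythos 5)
There is a genuine gap, and it sits exactly where you locate the difficulty. Your intermediate goal---upgrading the hypothesis to $|A+B|\leq|A|+|B|-1+Cr$---is not merely hard to reach by your covering argument; it is false. Take $B$ an interval of length $n$ and $A$ an interval of length $n$ together with a ``sprinkling'' of $\sqrt{n}$ faraway points whose translates of $B$ are pairwise disjoint. Then every $c$-subset $B'$ of $B$ satisfies $|A+B'|\leq |A|+|B|-1+(c-1)\sqrt{n}$, so the hypothesis holds with $r=O(\sqrt{n})\leq\varepsilon|B|$, yet $|A+B|\geq n^{3/2}$. (The paper flags precisely this obstruction when explaining why no clean inverse statement about $A$ is possible.) Your swap argument quantifies the problem: minimising $u(b')$ over $b'\in B_0$ only gives $u(b')\leq(|A|+|B|-1+r)/c$, so you bound $|(A+b)\setminus(A+B_0)|$ by $r+O(|A|/c)$, not by $O(r)$, and no choice of $c$ depending only on $\alpha,\beta$ repairs this. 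The paper's route is structurally different: it never controls $|A+B|$, but instead uses Theorem~\ref{implication_shao1-prov} (built on Shao's almost-all Balog--Szemer\'edi--Gowers theorem) to pass to large subsets $A^*\subset A$, $B^*\subset B$ with $\E_{B'}|A+B'|\gtrsim(1-\varepsilon)|A^*+B^*|$, and applies the $\mathbb{Z}_p$ Freiman-type theorem to $A^*+B^*$ only. A further architectural point: in the paper, Theorem~\ref{translates_of_A_Zp} is itself a corollary of the strengthened stability theorem (via allowing $r<0$), so taking it as an independent input makes your argument circular as the paper is organised.

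The second gap is the final sharpening from ``$B$ is within $O(r)$ of an AP'' to ``$B$ lies in an AP of size exactly $|B|+r$''. This is the bulk of the paper's work (Theorem~\ref{CD_technical}): one must exhibit, for the interval $J$ spanned by $B$, an explicit family of $c$-subsets $B'$ with $\E|A+B'|\geq|A|+|J|-1$, so that the hypothesis forces $|J|\leq|B|+r$. The danger case is $|J\setminus B|$ small but nonzero (say $\sqrt{n}$): a bounded sample of $B$ will typically miss every gap, so ``computing $|A+B_0|$ in terms of the gaps of $B$'' cannot work for a single witness set. The paper handles this by decomposing $I$ and $J$ into three blocks, choosing moduli $d_i$, and taking unions of whole fibres $B_i^{w}$ of $B$ modulo $d_i$ as the sampled sets, so that a Freiman $3k-4$ argument applies fibrewise. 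Your sketch contains no mechanism of this kind, and without one the loss of a constant factor in $r$ appears unavoidable.
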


\vspace{5pt}
This is actually the key to proving several of our {\em direct} results in ${\Bbb Z}_p$. In fact, our proof shows that $\varepsilon$ depends only on $\beta$: it is independent of $\alpha$.

We remark
that, in the form stated, if $A \neq B$ then this is not really a `true' inverse result to our results
above, as it does not describe
the structure of $A$. In fact, we cannot hope to insist that $A$ is contained in a short arithmetic progression, since
we may always add to $A$ a small `sprinkling' of faraway points without affecting the relevant properties of the
sumsets with a bounded number of points from $B$. But one {\it can} obtain that $A$ has small symmetric difference
with some short arithmetic progression -- we will say a few words about this when we come to prove
Theorem~\ref{stability_in_Zp_for_intro}.

\vspace{10pt}
\subsection{Abelian groups}
There is an important tool in many of our results, that will
often allow us to make progress with `arbitrary' sets. This may be of independent
interest.

\begin{theorem}\label{implication_shao1-prov}
For all $K$ and $\varepsilon >0$ there is an integer $c$ such that the following holds.
Let $A$ and $B$ be finite subsets of an abelian group. Then there are subsets $A^* \subset A$
and $B^* \subset B$, with $|A^*|\ge (1-\varepsilon)|A|$ and
$|B^*|\ge (1-\varepsilon)|B|$, such that if we select points $b_1,\hdots,b_c$ uniformly at random from
$B$ then
\[
{\mathbb E}_{b_1,  \dots , b_c\in B}\ |A+\{b_1, \dots , b_c\}|
\geq \min\big((1-\varepsilon)|A^*+ B^*|,\ K |A|,\ K |B| \big).
\]
\end{theorem}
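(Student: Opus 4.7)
The proof will rest on the elementary representation-count identity
\[
\mathbb{E}_{b_1,\ldots,b_c \in B}\bigl|A+\{b_1,\ldots,b_c\}\bigr| \;=\; \sum_{s \in A+B}\Bigl(1-\bigl(1-r(s)/|B|\bigr)^c\Bigr),
\]
where $r(s)=|\{(a,b)\in A\times B:a+b=s\}|$ and $\sum_s r(s)=|A||B|$. Call a sum $s$ \emph{popular} if $r(s)\geq\delta|B|$ for a threshold $\delta=\delta(\varepsilon,K)$ to be chosen: any such $s$ contributes at least $1-\varepsilon$ to the right-hand side as soon as $c\geq\delta^{-1}\log(1/\varepsilon)$. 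Hence it suffices to produce $A^*,B^*$ of relative sizes at least $1-\varepsilon$ for which either (i) every $s\in A^*+B^*$ is popular, giving $\mathbb{E}\geq(1-\varepsilon)|A^*+B^*|$, or else (ii) one of the direct bounds $\mathbb{E}\geq K|A|$ or $\mathbb{E}\geq K|B|$ is in force.

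I would then split on the size of $|A+B|$. In the \emph{dense} regime $|A+B|\leq K'\min(|A|,|B|)$ for a suitable $K'=K'(\varepsilon,K)$, the total number of \emph{bad pairs} $(a,b)\in A\times B$ with $r(a+b)<\delta|B|$ is at most $\delta K'|A||B|$, which is negligible once $\delta$ is chosen polynomially small in $\varepsilon/K'$. An iterative trimming procedure (in the spirit of Shao's almost-all version of the Balog--Szemer\'edi--Gowers theorem) then produces $A^*,B^*$ with $|A^*|\geq(1-\varepsilon)|A|$ and $|B^*|\geq(1-\varepsilon)|B|$ such that $A^*\times B^*$ contains no bad pair; equivalently every $s\in A^*+B^*$ is popular, and option (i) applies.

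In the \emph{sparse} regime $|A+B|>K'\min(|A|,|B|)$ I would set $A^*=A,\ B^*=B$ and aim directly for option (ii). Using the inequality $1-(1-x)^c\geq\min(cx,1)/2$ and writing $L=\{s:r(s)\leq|B|/c\}$, $H=\{s:r(s)>|B|/c\}$, the identity becomes
\[
\mathbb{E}\;\geq\;\tfrac{c}{2|B|}\sum_{s\in L}r(s)\;+\;\tfrac12|H|.
\]
When $|A+B|$ is much larger than $\min(|A|,|B|)$ the low-representation sums are abundant enough that the first term alone is at least $c\min(|A|,|B|)/4$, so for $c\geq 4K$ one obtains $\mathbb{E}\geq K\min(|A|,|B|)$; a complementary Cauchy--Schwarz estimate on the additive energy $\sum_s r(s)^2$, which controls the expected pairwise overlap $\mathbb{E}|(A+b_1)\cap(A+b_2)|$ via a Bonferroni bound $\mathbb{E}\geq c|A|-\binom c2\sum_s r(s)^2/|B|^2$, handles the borderline configurations where the two regimes meet.

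The main obstacle is the trimming lemma in the dense regime: the na\"ive greedy removal of a highest-bad-degree vertex does not terminate cleanly, because deleting one vertex can leave many unpopular pairs between the surviving elements. The resolution is a potential-function argument, tracking a weighted count of remaining bad pairs, that guarantees each deletion strictly decreases the potential by a quantity proportional to $\delta|B|$ and hence that the process terminates after at most $\varepsilon|A|$ deletions from $A$ and $\varepsilon|B|$ from $B$. This is the step where the constants $c$ and $\delta$ acquire their polynomial dependence on $1/\varepsilon$ and $K$, and verifying its constants is the most labour-intensive part of the plan.
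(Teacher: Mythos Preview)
Your overall shape --- partition $A+B$ into popular and unpopular sums, then split into a dense and a sparse regime --- is the same as the paper's, and the sparse-regime estimate is broadly right. The gap is the trimming step in the dense regime.

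What you need there is: given that the pairs $(a,b)$ with $a+b$ unpopular have density at most $\delta K'$ in $A\times B$, find $A^*,B^*$ of relative sizes $1-\varepsilon$ with $A^*\times B^*$ containing \emph{no} such pair, i.e.\ $A^*+B^*\subset C^+$. This is an instance of the arithmetic removal lemma, and it cannot be reached by a greedy or potential-function deletion. The difficulty is not that ``deleting one vertex can leave many unpopular pairs'' (the set of unpopular pairs is fixed once $A,B,\delta$ are); it is that a bipartite graph of edge density $\eta$ need not contain large edge-free induced bipartite subgraphs at all, and the additive constraint that the bad edges form a union of sum-lines $\{(a,s-a)\}$ is exactly what one needs Green's arithmetic regularity lemma to exploit. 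Your claimed polynomial dependence of $\delta$ on $\varepsilon,K$ would in particular give polynomial bounds for arithmetic removal, a well-known open problem; note too that the paper derives Roth's theorem from Theorem~$8'$, so a genuinely elementary argument here would be remarkable.

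The paper does not attempt to prove removal. It quotes Shao's almost-all Balog--Szemer\'edi--Gowers theorem (Theorem~\ref{thm_shao}, itself proved via regularity) as a black box, and even that delivers something weaker than you want: not $A_0+B_0\subset C^+$, but only $|A_0+B_0|\le |C^+|+\tfrac{\varepsilon}{s}N$. To close this gap the paper \emph{iterates} Shao $s=O(K/\varepsilon)$ times, producing a nested chain $(A_0,B_0)\supset(A_1,B_1)\supset\cdots$. Either some step stops early because $|C_j^+|>10K\min(|A_j|,|B_j|)$ or $|\Gamma_j|<(1-\delta)|A_j||B_j|$ --- both of which yield the $K\min(|A|,|B|)$ alternative directly, exactly as in your sparse case --- or the chain runs to length $s$ and pigeonhole finds a step $j$ with $|A_{j+1}+B_{j+1}|\ge|A_j+B_j|-\tfrac{11K}{s}\min(|A|,|B|)$, whence $|C_j^+|\ge(1-\varepsilon)|A_j+B_j|$ and the popular sums alone already give the bound $(1-\varepsilon)|A_j+B_j|$. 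The two ingredients your plan is missing are therefore (i) taking Shao's theorem as input rather than trying to reprove it by hand, and (ii) the iteration-plus-pigeonhole that converts Shao's slack conclusion into the sharp one.
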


\vspace{5pt}
One could view Theorem~\ref{implication_shao1-prov} as an approximate form of the sharp results we wish to prove in
${\mathbb Z}$ and ${\mathbb Z}_p$.
We remark that simple examples show that
the dependence of $c$ in terms of $\varepsilon$ and $K$ is necessary.

\vspace{5pt}
Amusingly, one can use a variant of Theorem~\ref{implication_shao1-prov} (namely
Theorem~$8'$ below)
to deduce Roth's theorem~\cite{Roth}
on three-term arithmetic
progressions. This is not surprising, since (as we shall explain later) our result builds
on work of Shao~\cite{shao}, which itself builds on the arithmetic regularity lemma of
Green~\cite{green} -- and the
arithmetic regularity lemma has Roth's theorem as an immediate consequence. But it is interesting that
the deduction from Theorem~$8'$ is direct.

\vspace{10pt}
\subsection{Restricted Sums}
Although our aim in this paper is not so much to give applications, we do expect that our results will
prove to be useful tools. As an example, as an application of our methods, we turn our attention to the Erd\H{o}s--Heilbronn problem.
Our results turn out to yield, very easily, some considerable extensions of this. For example, we consider
`restricted' sums, where for each $a \in A$ there are some $b \in B$ that we are
not allowed to use when forming the `restricted sumset' of $A$ with $B$.

\begin{theorem}\label{EH_for_intro}
  For each $\beta>0$ and integer $d$ there is an $n_0$ such that following holds. Let $A$ be a subset of $\mathbb{Z}_p$ with
  $n_0 \le |A|=n < (1-\beta)p/2$, and suppose that we form all sums $a+b$, where $a,b \in A$, except that for
  each $a \in A$ there is a set of $d$ values in $A$ that we are not allowed to take as $b$. Then the resulting `restricted sumset' has
  size at least $2n-1-2d$.
\end{theorem}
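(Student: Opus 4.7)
My plan is to reduce the problem to a direct one-dimensional argument via Theorem~\ref{stability_in_Zp_for_intro}, case-analysing on whether $A$ is contained in a short arithmetic progression. Throughout, write $\widetilde{d}(b) := |\{a \in A : b \in F_a\}|$, so that $\sum_{b \in A} \widetilde{d}(b) = nd$, and call the set of allowed sums the restricted sumset.

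\emph{AP case.} Suppose first that $A$ lies in an AP $P \subset \mathbb{Z}_p$ with $|P| \le n + 2d$. Since $|P| \le (1-\beta)p/2 + 2d < p/2$ for $n \ge n_0$ large, the sumset $A + A$ does not wrap around modulo $p$, so I identify $A$ with a subset of $\mathbb{Z}$ via an affine bijection of $P$ with an integer interval. Writing $A = \{a_1 < a_2 < \cdots < a_n\}$ in this identification, the two sets $a_1 + (A \setminus F_{a_1})$ and $a_n + (A \setminus F_{a_n})$ each have exactly $n - d$ elements, are subsets of the restricted sumset, and lie in the closed integer intervals $[2a_1,\, a_1 + a_n]$ and $[a_1 + a_n,\, 2a_n]$ respectively, meeting in at most the single point $a_1 + a_n$. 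Their union therefore has size at least $2(n-d) - 1 = 2n - 1 - 2d$, proving the result in this case.

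\emph{Non-AP case.} Suppose $A$ is not contained in any AP of size $\le n + 2d$. By the contrapositive of Theorem~\ref{stability_in_Zp_for_intro} with $B = A$ and $r = 2d$ (valid since $2d \le \varepsilon(\beta)\,n$ for $n \ge n_0$), there exist $c = c(\beta)$ elements $b_1, \ldots, b_c \in A$ with $|A + \{b_1, \ldots, b_c\}| \ge 2n + 2d$, and the restricted sumset using only these $c$ translates has size at least
\[
|A + \{b_1, \ldots, b_c\}| - \sum_{i=1}^{c} \widetilde{d}(b_i).
\]
If $\sum_i \widetilde{d}(b_i) \le 4d + 1$ we are done. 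In general this need not hold for the specific $c$-tuple produced by the theorem, so I apply Theorem~\ref{stability_in_Zp_for_intro} a second time with $B$ replaced by the low-degree subset $B^* := \{b \in A : \widetilde{d}(b) \le T\}$, for a threshold $T = T(\beta, d)$ to be chosen. By Markov's inequality $|B^*| \ge n - nd/T$, and for $T$ large the size hypotheses of the theorem remain valid; in the resulting subcases, one either obtains a $c$-tuple in $B^*$ with $\sum_i \widetilde{d}(b_i) \le cT$ controlled and sumset $\ge |A| + |B^*| + r$, or $B^*$ is forced into a short AP -- in which case $A$ differs from this AP by at most $nd/T$ outliers, and a small extension of the AP-case argument (taking the extremal $a_1, a_n$ inside $B^*$) still yields the desired bound.

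The main technical obstacle is this parameter balancing: the number of outliers $nd/T$ and the per-translate loss $\widetilde{d}(b) \le T$ are in tension, so $T$ and the slack $r$ must be chosen carefully (as functions of $\beta$ and $d$) to keep both within the range permitted by the stability theorem while still yielding the clean bound $2n - 1 - 2d$. The assumption $n \ge n_0(\beta, d)$ is used throughout to absorb lower-order terms, both in the reduction and in the extended AP-case argument needed to handle the few elements of $A$ lying outside the short AP containing $B^*$.
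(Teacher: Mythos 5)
Your AP case is sound and is exactly the paper's endgame: once $A$ sits in a short progression with no wraparound, the two sets $a_1+(A\setminus F_{a_1})$ and $a_n+(A\setminus F_{a_n})$ give $2(n-d)-1$. The genuine gap is in your non-AP case, and it is created by an avoidable choice of orientation. The hypothesis of Theorem~\ref{EH_for_intro} controls the \emph{row} degrees ($|F_a|=d$ for every $a$), not the column degrees $\widetilde{d}(b)$, and you then form the restricted union with the selected elements $b_1,\dots,b_c$ in the \emph{second} slot, paying $\sum_i\widetilde{d}(b_i)$, which is indeed uncontrolled (a single $b$ may be forbidden by all $n$ elements). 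But addition is commutative: put the selected elements in the \emph{first} slot. The set $\bigcup_{i}\bigl(b_i+(A\setminus F_{b_i})\bigr)$ is contained in the restricted sumset and has size at least $|A+\{b_1,\dots,b_c\}|-cd$, since each translate loses exactly the $d$ points $b_i+F_{b_i}$. (You already use this orientation in your AP case, just not here.) With that fix the only remaining adjustment is the threshold: take $r=cd$ rather than $r=2d$ in the dichotomy. If some $c$-set has $|A+B'|\ge 2n+cd$ you get a restricted sumset of size at least $2n\ge 2n-1-2d$ immediately; otherwise Theorem~\ref{stability_in_Zp_for_intro} (with $cd\le\varepsilon n$ for $n\ge n_0$) places $A$ in a progression of size $n+cd$, which still has no wraparound, and your AP argument applies verbatim. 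This is precisely the paper's proof.

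Even taken on its own terms, the proposed repair via the low-degree subset $B^*$ does not close the gap. In the subcase where $B^*$ falls into a short progression, that progression has size $|B^*|+r$ with $r\gtrsim nd/T+cT=\Omega(\sqrt{nd})$ after optimising $T$, and $A$ retains up to $nd/T$ outliers that may lie far from it. The two-translate endgame then overlaps in up to $O(nd/T)$ points rather than one (the overlap $|A\cap(A+(a_n-a_1))|$ is only controlled up to the number of outliers), so you obtain $2(n-d)-O(\sqrt{nd})$, not the exact bound $2n-1-2d$. No choice of $T$ and $r$ removes these lower-order losses, which is presumably the ``tension'' you flag; the resolution is not better balancing but the change of orientation above, which makes the second application of the stability theorem unnecessary.
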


\vspace{5pt}
Note that the Erd\H{o}s--Heilbronn problem corresponds to the case when $d=1$ and for each $a \in A$ the one
element of $A$ that we cannot sum with $a$ is $a$ itself.

\vspace{5pt}
In fact, we prove an extension of this where the number of summands from $A$ that we take is bounded, as in the spirit of the rest of this paper.
Indeed, this actually {\it helps} us: because we are already in the situation where the second summands are
from a bounded set, the additional constraint that certain pairs are not allowed turns out to be much easier to
handle than it would be in general. So it turns out that our results here will follow directly from
Theorem~\ref{stability_in_Zp_for_intro}.

\vspace{10pt}

\subsection{Sums in the Continuous Setting}
Our results tend to have consequences in the continuous setting, about sums of compact sets in the reals or
in the circle $\mathbb{T}=\mathbb{R}/\mathbb{Z}$.
We mention here two typical examples. In the statements we shall use $|\cdot|$ to denote Lebesgue measure on the Euclidean space $\mathbb{R}$ or Haar measure (normalized Lebesgue measure) on the circle $\mathbb{T}=\mathbb{R}/\mathbb{Z}$. As we will
see, the deductions from the corresponding discrete results (namely Theorem~\ref{intro_three_translates_Z} for the first and Theorems~\ref{translates_of_A_Zp}
and \ref{intro_general_three_translates_Zp} for the second) 
will be fairly straightforward.

\begin{corollary}\label{cont_intro_three_translates_Z}
Let $A$ and $B$ be non-empty compact subsets of $\mathbb{R}$ with $|A| \ge |B|$. Then there are elements
$b_1,b_2,b_3 \in B$ such that
$$|A+\{b_1,b_2,b_3\}| \ge |A|+|B|.$$
\end{corollary}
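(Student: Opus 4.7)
The plan is to deduce the corollary from Theorem~\ref{intro_three_translates_Z} by discretization together with a compactness argument. I would first reduce to the strict case $|A| > |B|$: if $|A| = |B|$, enlarge $A$ to $A \cup [N, N+\eta]$ for some small $\eta > 0$ and $N$ chosen far from $A \cup B$. The added component contributes at most $3\eta$ to any triple-translate sumset, so the resulting $O(\eta)$ loss in the final bound is recovered by sending $\eta \to 0$ alongside the compactness step below.

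Fix a small $\delta > 0$ and consider the outer discretizations $A_\delta = \{k \in \mathbb{Z} : A \cap [k\delta, (k+1)\delta) \neq \emptyset\}$ and $B_\delta$ defined analogously. Since $A$ is compact,
\[
|A| \;\leq\; \delta |A_\delta| \;\leq\; |A + [-\delta,\delta]| \;\longrightarrow\; |A|
\]
as $\delta \to 0$, and similarly for $B$. Hence $|A_\delta| \geq |B_\delta|$ for all sufficiently small $\delta$, and Theorem~\ref{intro_three_translates_Z} supplies $b_1^\delta, b_2^\delta, b_3^\delta \in B_\delta$ with
\[
|A_\delta + \{b_1^\delta, b_2^\delta, b_3^\delta\}| \;\geq\; |A_\delta| + |B_\delta| - 1.
\]
For each $i$, lift to $\tilde b_i^\delta \in B \cap [b_i^\delta \delta, (b_i^\delta + 1)\delta)$ and set $S_\delta = A + \{\tilde b_1^\delta, \tilde b_2^\delta, \tilde b_3^\delta\}$.

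The technical core is to convert this into a lower bound on the Lebesgue measure $|S_\delta|$ without losing a factor of two. For each $k = k' + b_i^\delta$ in the discrete sumset, picking $a \in A \cap [k'\delta, (k'+1)\delta)$ yields $a + \tilde b_i^\delta \in S_\delta \cap [k\delta, (k+2)\delta)$, so $S_\delta$ meets each of these length-$2\delta$ intervals. To avoid a factor-of-two loss, I would split by parity: the intervals $\{[2j\delta, 2(j+1)\delta) : j \in \mathbb{Z}\}$ tile $\mathbb{R}$ disjointly, and each such interval that $S_\delta$ hits lies in a $[-2\delta,2\delta]$-neighbourhood of its witness point, so the number of even $k$ that occur is at most $|S_\delta + [-2\delta, 2\delta]|/(2\delta)$; the same bound holds for the odd-shifted partition. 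Combining:
\[
|A_\delta| + |B_\delta| - 1 \;\leq\; |A_\delta + \{b_1^\delta, b_2^\delta, b_3^\delta\}| \;\leq\; \frac{|S_\delta + [-2\delta, 2\delta]|}{\delta},
\]
whence $|S_\delta + [-2\delta, 2\delta]| \geq |A| + |B| - \delta$.

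Finally, take $\delta_n \to 0$ along a sequence on which $(\tilde b_1^{\delta_n}, \tilde b_2^{\delta_n}, \tilde b_3^{\delta_n}) \to (b_1^\ast, b_2^\ast, b_3^\ast) \in B^3$ (using compactness of $B^3$). Then the compact sets $S_{\delta_n} + [-2\delta_n, 2\delta_n]$ converge in Hausdorff metric to $A + \{b_1^\ast, b_2^\ast, b_3^\ast\}$, and upper semicontinuity of Lebesgue measure under Hausdorff convergence of compact sets yields
\[
|A + \{b_1^\ast, b_2^\ast, b_3^\ast\}| \;\geq\; \limsup_n |S_{\delta_n} + [-2\delta_n, 2\delta_n]| \;\geq\; |A| + |B|.
\]
The main obstacle is the penultimate step: a naive discrete-to-continuous transfer loses a factor of two because the hitting intervals have length $2\delta$, and the parity splitting is what restores the sharp constant $|A|+|B|$.
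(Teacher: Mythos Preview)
Your argument is correct, but it takes a different route from the paper's. The paper proceeds in two stages: it first reduces to the case where $A$ and $B$ are finite unions of closed intervals with $|A|>|B|$ (by approximating from outside by nested such sets and using compactness of $B_1^3$ together with continuity of $(x_1,x_2,x_3)\mapsto |A+\{x_1,x_2,x_3\}|$ to pass to the limit), and only then discretises by intersecting with the lattice $n^{-1}\mathbb{Z}$. The point of the intermediate reduction is that for a fixed finite union of intervals $X$ one has $|X\cap n^{-1}\mathbb{Z}| = n|X|+O(1)$, and since $A+\{b_1^n,b_2^n,b_3^n\}$ is again a finite union of (boundedly many) intervals, the discrete-to-continuous transfer is exact up to $O(1)$ with no factor-of-two issue arising at all. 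Your single-step outer discretisation is more direct but forces you to control the overlap of the length-$2\delta$ witness intervals, which is where your parity argument enters; the paper's two-step reduction simply sidesteps this.

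One small simplification: your parity splitting, while correct, is not needed. For $K=A_\delta+\{b_1^\delta,b_2^\delta,b_3^\delta\}$ you have shown $\bigcup_{k\in K}[k\delta,(k+2)\delta)\subset S_\delta+[-2\delta,2\delta]$; but this union already contains the disjoint family $\bigcup_{k\in K}[k\delta,(k+1)\delta)$, which has measure exactly $\delta|K|$. So $\delta|K|\le |S_\delta+[-2\delta,2\delta]|$ immediately, and there is no factor-of-two threat in the first place.
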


\begin{corollary}\label{cont_translates_of_A_Zp}
For all $\alpha, \beta >0$ there exists a constant $c$ such that the
following holds. Whenever $A$ and $B$ are non-empty compact subsets of $\mathbb{T}$ with
$\alpha |B| \leq |A|$ and $|A|+|B| \leq 1-\beta$, there
are $b_1, \hdots, b_c \in B$ such that
\[
|A+\{b_1, \hdots, b_c\} |\geq |A|+|B|.
\]
\end{corollary}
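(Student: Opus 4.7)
The plan is to discretize $\mathbb{T}$, apply the strengthened form of Theorem~\ref{translates_of_A_Zp} on $\mathbb{Z}_p$, and transfer the conclusion back to the continuous setting via a thickening trick combined with a Hausdorff-limit argument. We may assume $|B|>0$; if $|B|=0$ then any single $b_1\in B$ already gives $|A+b_1|=|A|\ge|A|+|B|$ and we are done.

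For each large prime $p$, partition $\mathbb{T}$ into arcs $I_i=[i/p,(i+1)/p)$ for $i\in\mathbb{Z}_p$, and form the outer discretizations $\tilde A=\{i:I_i\cap A\ne\emptyset\}$, $\tilde B=\{i:I_i\cap B\ne\emptyset\}$. Since $A,B$ are compact, $|\tilde A|/p\ge|A|$ and $|\tilde B|/p\ge|B|$, with both ratios tending to $|A|,|B|$ as $p\to\infty$ (by outer regularity). Hence for $p$ large enough the hypotheses of Theorem~\ref{translates_of_A_Zp} with parameters $\alpha/2$ and $\beta/2$ are satisfied for $\tilde A,\tilde B$ (using the remark after Theorem~\ref{intro_general_three_translates_Zp} to dispense with the upper bound on $|A|/|B|$), yielding $c=c(\alpha,\beta)$ and $\tilde b_1,\ldots,\tilde b_c\in\tilde B$ with
\[
|\tilde A+\{\tilde b_1,\ldots,\tilde b_c\}|\ge|\tilde A|+|\tilde B|-1\ge p(|A|+|B|)-1.
\]
Choose $b_k^{(p)}\in B\cap I_{\tilde b_k}$ arbitrarily, and set $K^{(p)}=A+\{b_1^{(p)},\ldots,b_c^{(p)}\}$ and $S=\tilde A+\{\tilde b_1,\ldots,\tilde b_c\}$. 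For each $j\in S$, write $j=i+\tilde b_k$ with $i\in\tilde A$ and pick $a\in A\cap I_i$: then $a+b_k^{(p)}\in K^{(p)}$ lies in $I_i+I_{\tilde b_k}=[j/p,(j+2)/p)$, and so $I_j\subset(a+b_k^{(p)})+[-2/p,2/p]\subset K^{(p)}+[-2/p,2/p]$. Taking the union over $j\in S$ yields $|K^{(p)}+[-2/p,2/p]|\ge|S|/p\ge|A|+|B|-1/p$.

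Finally, by compactness of $B$, I extract a subsequence $p_n\to\infty$ along which $b_k^{(p_n)}\to b_k^*\in B$ for each $k$. Then $K^{(p_n)}\to K^*:=A+\{b_1^*,\ldots,b_c^*\}$ in the Hausdorff metric (Minkowski sum by the fixed compact $A$ is Hausdorff-continuous), and since the thickening radius $2/p_n$ also tends to zero, we further have $K^{(p_n)}+[-2/p_n,2/p_n]\to K^*$ in Hausdorff. Upper semicontinuity of Lebesgue measure under Hausdorff convergence of compact sets then gives
\[
|K^*|\;\ge\;\limsup_n\,|K^{(p_n)}+[-2/p_n,2/p_n]|\;\ge\;|A|+|B|,
\]
which is exactly what we want. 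The main obstacle is the middle step: the discrete bound lives on the grid $\tfrac1p\mathbb{Z}/\mathbb{Z}$, whereas $K^{(p)}$ does not, since $b_k^{(p)}$ is only known to lie in $I_{\tilde b_k}$ rather than at $\tilde b_k/p$, and this grid mismatch can cause genuine losses in the measure of the sumset. The thickening $[-2/p,2/p]$ absorbs this mismatch cleanly, and the Hausdorff/upper-semicontinuity limit removes it at the end.
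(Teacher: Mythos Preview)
Your proof is correct. The paper's own argument (sketched for Corollary~\ref{cont_intro_three_translates_Z} and said to transfer verbatim here) proceeds in two stages: first it approximates $A,B$ from outside by nested finite unions of closed intervals and uses compactness of $B$ together with the continuity of $(x_1,\dots,x_c)\mapsto |A+\{x_1,\dots,x_c\}|$ to pass to the limit in the $b_i$'s; then, for finite unions of intervals, it intersects with a fine lattice $p^{-1}\mathbb{Z}/\mathbb{Z}$ and invokes Theorems~\ref{translates_of_A_Zp} and~\ref{intro_general_three_translates_Zp}, using the trivial count $|X\cap p^{-1}\mathbb{Z}|=p|X|+O(1)$ for unions of intervals. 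Your route collapses these two reductions into one: you discretise $A,B$ directly via their outer covers by arcs, and you handle the resulting grid mismatch (the fact that $b_k^{(p)}$ need not lie at $\tilde b_k/p$) by the thickening $[-2/p,2/p]$ and then remove it via Hausdorff convergence and upper semicontinuity of Lebesgue measure on compact sets. This is cleaner in that it avoids the intermediate class of finite unions of intervals; the paper's version is perhaps slightly more elementary in that it only needs continuity of the map $(x_1,\dots,x_c)\mapsto |A+\{x_1,\dots,x_c\}|$ rather than the general upper-semicontinuity fact. Both arguments ultimately rest on the same compactness extraction of limit points $b_k^*\in B$.
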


It is tempting to believe that
these corollaries are equivalent to the corresponding results in the discrete case, but this does not
appear to be the case: it does not seem easy to prove the reverse implications, essentially because the
points we sample (the $b_i$) in the continuous case may not `line up' with the way we are embedding our
discrete sets into the continuous world.

\vspace{10pt}
\subsection{Structure}
The plan of the paper is as follows. In Section~2 we mention various results from the literature that we
will make use of. In Section~3 we prove
the results in ${\mathbb Z}$, in
the regime where $A$ and $B$ have the same size. We also determine the cases of equality. Then in Section~4 we prove
Theorem \ref{implication_shao1-prov}, which is a key component in several of later proofs. Section~5
contains the
result in ${\mathbb Z}_p$ when $B$ is an interval, and also deals with the situation when $A$ is much larger than $B$, and then Section~6 deals with the
general situation in ${\mathbb Z}_p$ (including Theorem \ref{stability_in_Zp_for_intro}).
In Section 7 we give the applications to the Erd\H{o}s--Heilbronn problem mentioned above. Then Section 8
has the the continuous versions of our statements.
We finish in Section~9 with some open problems.

\vspace{5pt}
It is worth pointing out that a reader
who is only interested in what happens in the integers will still need Section 4 since, although the
proof of Theorem \ref{intro_three_translates_Z} is direct, the proof of Theorem~\ref{intro_few_translates_Z}
is much more involved and does make use of
Theorem~\ref{implication_shao1-prov}. Indeed, we will actually deduce
Theorem~\ref{intro_few_translates_Z} from Theorem~\ref{translates_of_A_Zp_equality}, which is a particular case of Theorem~\ref{translates_of_A_Zp}. The latter will turn out to follow from a form of
Theorem~\ref{stability_in_Zp_for_intro} itself.

\vspace{10pt}
\subsection{Notation}
Our notation is standard. To make our paper more readable, we often omit integer-part signs when these do not affect the
argument. For example, given
$x>0$, we use the notation $[x]$ for $\{1,2, \hdots, \lfloor x \rfloor\}$.

\vspace{5pt}
Sometimes we write `$x \mod d$' as shorthand for the infinite arithmetic progression
$\{ y \in \mathbb{Z}: y \equiv x \mod d \}$, and refer to it as a {\em fibre} mod $d$.
When $S$ is a subset of $\mathbb{Z}$ we often write $S^x$ for the intersection of this fibre with
$S$ -- when the value of $d$ is clear. (We sometimes write $S^x$ as $S^x_d$ when we want to stress the value of $d$.)
Thus $S^x=S\cap \pi^{-1}(x)$, where $\pi = \pi_d$ denotes
the natural projection from $\mathbb{Z}$ to $\mathbb{Z}_d$. We also write $\widetilde{S}$
for $\pi_d(S)$.

\vspace{5pt}
When we write a probability or an expectation over a finite set, we always assume that the elements
of the set are being sampled uniformly. Thus, for example,
for a finite set $X\subset \mathbb{Z}$ we denote the expectation and probability when we sample uniformly over
all $x \in X$ by respectively $\E_{x\in X} \text{ and } \Prob_{x\in X}.$ Similarly, an expectation when
we sample uniformly over a product space $X_1 \times \cdots \times X_n$ may be written as
$\E_{\substack{x_j \in X_j \\ j \in \{1,\hdots,n \}}}$.

We also often sample uniformly over all $c$-sets of a given set $X$. In most of those cases,
we could instead sample $c$ elements uniformly and independently, but the notation would tend to get
unwieldy, and this is why we use the sampling over all $c$-sets instead.

\vspace{5pt}
For a set $X$ and integer $c \in \mathbb{N}$ we as usual denote by $X^{(c)}$ the family of
all subsets of $X$ of size $c$, and by $X^{(\le c)}$ the family of
all subsets of size at most $c$. However, in several places it is very useful, when we want to
specify that a set is small, to also allow $X$ to have size smaller than $c$, and in that case
we make the convention that $X^{(c)}$ denotes the singleton $\{ X \}$. We hope that the reader will
not mind this convention, which will allow arguments to flow without having to deal
with many (unimportant) special cases.

\vspace{5pt}
For more general background on sumsets, or for background on any of the results
mentioned in the next section, see the survey of Breuillard, Green and Tao \cite{BGT-doubling} or the
books of Nathanson \cite{Nathbook} or Tao and Vu \cite{taobook}.

\vspace{5pt}
To end this Introduction, we mention two further questions of a seemingly similar flavour,
although really there is little connection with our topic.
First, given sets $A$ and $B$ in an abelian group, is there a large set
of disjoint translates $A+b$ with $b \in B$? The answer is given by a simple
yet fundamental tool in additive combinatorics, Ruzsa's covering lemma~\cite{RuzsaCov}
(see also Lemma 2.14 in Tao and Vu~\cite{taobook}), which states that there is a set $X\subset B$
such that $X+A-A$ contains $B$ and $|X+A|=|A||X|$, i.e. the translates of $A$ through
elements of $X$ are disjoint and the translates of $A-A$ through $X$ cover $B$. In
particular, this shows that there are at least $|B|/|A-A|$ disjoint
translates of $A+b$ with $b \in B$.

\vspace{5pt}
Second, one might also wonder about the following related question. Given sets $A$
and $B$ in an Abelian group, are there {\em small} subsets $A'\subset A$ and
$B'\subset B$ such that $A+B=(A+B') \cup (B+A')$? A result of Ellenberg~\cite{Ell} answers
this question in ${\mathbb Z}_p^n$, showing that we can always take $|A'|, |B'| \leq c_q^n$
where $c_q<q$ (and in fact $c_q \leq \alpha q$ for some absolute constant $\alpha<1$).  Of course, this result and our main result consider very
different ranges. In \cite{Ell} the union $(A+B') \cup (B+A')$ is required to be much
bigger, potentially of size $|A||B|$, but to achieve this $A'$ and $B'$ are allowed to be very large, of
size $c_q^n$.

\vspace{5pt}
\section{Prerequisites}
As mentioned above, we will make use of
the following recent result of Shao~\cite{shao}, which can be viewed as an `almost all'
version of the Balog--Szemer\'edi--Gowers theorem~\cite{BSG1,BSG2}. The main ingredient
in this result is an application of the arithmetic removal lemma (an adaptation
of the graph removal lemma to groups) of
Green (see \cite{green}). We shall actually use a version of this result for unequal size sets
that can be proved in the same way. Here as usual we write $A+_{\Gamma}B$, where $\Gamma$ is a subset
of $A \times B$, to denote the set of all sums $a+b$ where $(a,b) \in \Gamma$. (And later we will also
use $A-_{\Gamma}B$ to denote the set of all sums $a-b$ with $(a,b) \in \Gamma$.) Here is the (variant of the)
result of Shao~\cite{shao} that we shall need.

\begin{theorem}\label{thm_shao}
For all $\varepsilon, K>0$ there exist $\delta>0$ such that the following holds. Let $G$ be an
abelian group and let $N \in \mathbb{N}$. Let $A,B \subset G$
be two subsets with $|A|, |B| \geq N$, and let $\Gamma \subset A \times B$ be a subset with $
|\Gamma|\geq (1-\delta)|A||B|$. If $|A+_{\Gamma}B| \leq KN$, then there
exist $A_0 \subset A, B_0 \subset B$ such that $$|A_0| \geq (1-\varepsilon)|A|
\text{ and } |B_0|\geq (1-\varepsilon)|B| \text{ and } |A_0+B_0| \leq |A+_{\Gamma}B|+\varepsilon N.$$
\end{theorem}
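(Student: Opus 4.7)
The plan is to reduce the unequal-size statement to Shao's original (equal-size) theorem via an inflation construction in an auxiliary larger group. Without loss of generality I assume $|A|\le|B|$. A useful preliminary observation is that $|A|,|B|\le 2KN$: for $\delta<1/2$, averaging $|\{b\in B:(a,b)\in\Gamma\}|$ over $a\in A$ shows that some $a$ has $|\{b\in B:(a,b)\in\Gamma\}|\ge(1-\delta)|B|$, so $|A+_\Gamma B|\ge(1-\delta)|B|$ and hence $|B|\le 2KN$; symmetrically $|A|\le 2KN$.

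For the main construction, I would work in $G\times H$ for an auxiliary finite abelian group $H$, pick an integer $r$ close to $|B|/|A|$ and distinct elements $h_1,\dots,h_r\in H$, and set
$$A'=A\times\{h_1,\dots,h_r\},\quad B'=B\times\{0_H\},\quad \Gamma'=\{((a,h_i),(b,0)):(a,b)\in\Gamma,\ 1\le i\le r\}.$$
Then $|A'|=r|A|\approx|B|=|B'|$, the density of $\Gamma'$ in $A'\times B'$ is exactly $1-\delta$, and $A'+_{\Gamma'}B'=(A+_\Gamma B)\times\{h_1,\dots,h_r\}$ has size $r|A+_\Gamma B|\le K\cdot r|A|=K|A'|$. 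When $|A|$ divides $|B|$ we have $|A'|=|B'|$ exactly; the general case is handled by padding $B'$ with a few generic elements to align sizes, at the cost of a tiny density loss absorbed into $\delta$. I then apply Shao's equal-size theorem with parameter $\varepsilon'$ to obtain $A_0'\subset A'$, $B_0'\subset B'$ with $|A_0'|,|B_0'|\ge(1-\varepsilon')|B|$ and $|A_0'+B_0'|\le r|A+_\Gamma B|+\varepsilon'|B|$.

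It remains to descend back to $G$. Writing $B_0'=B_0\times\{0_H\}$ gives $|B_0|\ge(1-\varepsilon')|B|$ directly. For each $i$ let $A_0^{(i)}=\{a\in A:(a,h_i)\in A_0'\}$, so that $\sum_i|A_0^{(i)}|\ge(1-\varepsilon')r|A|$ and $\sum_i|A_0^{(i)}+B_0|\le r|A+_\Gamma B|+\varepsilon'|B|$. An averaging argument shows that at least $(1-\sqrt{\varepsilon'})r$ indices $i$ satisfy $|A_0^{(i)}|\ge(1-\sqrt{\varepsilon'})|A|$, and among these the average of $|A_0^{(i)}+B_0|$ is bounded by $(|A+_\Gamma B|+\varepsilon'|B|/r)/(1-\sqrt{\varepsilon'})\le|A+_\Gamma B|+C\sqrt{\varepsilon'}N$ for some $C=C(K)$, using $|B|/r\le|A|\le 2KN$ and $|A+_\Gamma B|\le KN$. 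Hence some index $i^*$ satisfies both inequalities simultaneously; setting $A_0=A_0^{(i^*)}$ and choosing $\varepsilon'\le\min(\varepsilon^2,(\varepsilon/C)^2)$ yields $|A_0|\ge(1-\varepsilon)|A|$, $|B_0|\ge(1-\varepsilon)|B|$, and $|A_0+B_0|\le|A+_\Gamma B|+\varepsilon N$.

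The main technical nuisance is the final cleanup, namely extracting a single good slice $i^*$ from averaged bounds over all $r$ slices, and in particular ensuring that the \emph{additive} sumset error $\varepsilon N$ is preserved rather than blown up to something multiplicative; this is why we first bound $|A|,|B|=O_K(N)$ and why we lose a square root in $\varepsilon'$. One can alternatively bypass the reduction entirely and directly adapt Shao's original proof, which rests on Green's arithmetic removal lemma -- a tool that treats $A$ and $B$ symmetrically without any equal-size hypothesis -- and simply check that each step goes through unchanged when $|A|\ne|B|$.
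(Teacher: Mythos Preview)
The paper does not give a proof of this theorem at all: it simply states the result and asserts that Shao's original argument ``can be proved in the same way'' for unequal-size sets. So your closing sentence --- bypass the reduction and rerun Shao's proof, noting that the arithmetic removal lemma makes no use of an equal-size hypothesis --- \emph{is} the paper's approach in its entirety.

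Your main argument, by contrast, is a genuine black-box reduction: inflate $A$ by a factor $r\approx|B|/|A|$ inside $G\times H$, apply Shao's equal-size theorem, and pigeonhole back to a good slice. This is correct, and it has the advantage of not requiring the reader to open up Shao's proof. The preliminary bound $|A|,|B|\le 2KN$ is exactly what is needed to convert the multiplicative loss $1/(1-\sqrt{\varepsilon'})$ in the averaging step into the required additive error $\varepsilon N$, and your square-root loss in $\varepsilon'$ is the right order. The padding step to align $|A'|$ and $|B'|$ exactly is routine (place the extra points in a fresh $H$-coset and exclude them from $\Gamma'$, costing density at most $|A|/|B'|\le 1/r$, which is harmless once $r\ge 2$; the case $r=1$ means $|A|,|B|$ are already within a factor of two and one can pad directly). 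So your route is longer but self-contained, while the paper's is a one-line appeal to the robustness of the original proof.
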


\vspace{5pt}
We will need the extension of Freiman's $3k-4$ theorem \cite{Frei-59,Frei-62} by Lev and
Smeliansky~\cite{LevSme} and Stanchescu~\cite{Stan}.

\begin{theorem}\label{3k-4}
Let $A$ and $B$ be finite non-empty subsets of ${\mathbb Z}$ with $|A+B|=|A|+|B|-1+r$, where
$r \leq \min(|A|,|B|)-3$. Then there are arithmetic progressions $P_A \supset A$ and $P_B \supset B$,
having the same common difference, such that $|P_A \setminus A|,|P_B \setminus B| \leq r$.
\end{theorem}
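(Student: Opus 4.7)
The plan is to proceed by induction on $|A| + |B|$, after normalizing $A$ and $B$ by an affine transformation so that $\min A = \min B = 0$ and the gcd of the elements of $(A - A) \cup (B - B)$ is $1$. Under this normalization, any pair of APs $P_A \supseteq A$, $P_B \supseteq B$ sharing a common difference $d$ must have $d = 1$, so the conclusion reduces to showing $\max A \leq |A| - 1 + r$ and $\max B \leq |B| - 1 + r$. Writing $M = \max A$ and $N = \max B$, the containment $A + B \subseteq [0, M + N]$ combined with $|A + B| = |A| + |B| - 1 + r$ gives $M + N \geq |A| + |B| - 2 + r$ as a starting point.

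For the base case $r = 0$, the hypothesis $|A + B| = |A| + |B| - 1$ forces $A$ and $B$ to be APs with a common difference by the classical characterization of equality in the Cauchy--Davenport-type bound over $\mathbb{Z}$, hence intervals after normalization. For the inductive step, list $A = \{a_1 < \dots < a_k\}$, set $A' = A \setminus \{a_k\}$, and define $\tau = |(a_k + B) \setminus (A' + B)|$, so that $1 \leq \tau \leq |B|$ and $|A' + B| = |A'| + |B| - 1 + (r + 1 - \tau)$. If $\tau \geq 2$ then $r' = r + 1 - \tau \leq r - 1$, and the inductive hypothesis applied to $(A', B)$ produces APs $P_{A'}, P_B$ with common difference $1$ and at most $r'$ holes each. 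Analogous moves from removing $a_1$, $b_1$, or $b_l$ (via the symmetries $(A, B) \leftrightarrow (B, A)$ and $x \mapsto -x$) handle the other extreme points. If the $\tau = 1$ case occurs simultaneously at all four extremes, the ``new'' boundary layer of $A + B$ is so tightly constrained that one can argue directly -- using that the gcd is $1$ -- that $A$ and $B$ must already satisfy the conclusion without further induction.

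The main obstacle is the gluing step at the end of the induction: given APs $P_{A'}, P_B$ for the smaller instance, one must verify that $a_k$ fits into an AP $P_A$ with the \emph{same} common difference and at most $r$ holes overall, i.e.\ that reinserting $a_k$ raises the hole-count by at most $\tau - 1$. The key quantitative input is that $a_k$ cannot sit far from $P_{A'}$, since otherwise $a_k + B$ would be nearly disjoint from $A' + B$, forcing $\tau$ close to $|B|$ and hence $r' < 0$, which is impossible. The hypothesis $r \leq \min(|A|, |B|) - 3$ is precisely what makes these bookkeeping inequalities close comfortably, and carrying out the combinatorial accounting -- especially checking that the common-difference property survives reinsertion and that one does not trade the hole-count between $P_A$ and $P_B$ in a lossy way -- is where the bulk of the technical work lies.
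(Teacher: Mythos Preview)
The paper does not prove this statement: Theorem~\ref{3k-4} appears in Section~2 (Prerequisites) as a result quoted from Lev--Smeliansky and Stanchescu, with no proof given. So there is no ``paper's own proof'' to compare against; I can only assess your proposal on its own merits.

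Your inductive scheme is a reasonable starting point, and you are right that after normalising to $\gcd((A-A)\cup(B-B))=1$ the conclusion becomes a diameter bound. But the proposal has two genuine gaps that you yourself flag without resolving. First, the gluing step: after applying the inductive hypothesis to $(A',B)$ you obtain progressions $P_{A'},P_B$ with some common difference $d$, and you must show that $a_k$ can be absorbed into an extension of $P_{A'}$ with at most $r$ holes. You observe correctly that $\tau<|B|$ forces $a_k$ into the right residue class modulo $d$ (since $a_k+b=a'+b'$ for some $a'\in A'$, $b,b'\in B$), but you give no quantitative argument bounding how far $a_k$ lies beyond the end of $P_{A'}$. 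The sentence ``$a_k$ cannot sit far from $P_{A'}$, since otherwise $\tau$ would be close to $|B|$'' is the right intuition, but turning it into the inequality $|P_A\setminus A|\le r$ requires real work that is absent here. Second, the case where $\tau=1$ simultaneously at all four extreme points is dismissed with ``one can argue directly''; this is precisely the delicate configuration, and no argument is supplied. Note also that when $\tau=1$ and $|A|=\min(|A|,|B|)$ with $r=|A|-3$, the pair $(A',B)$ has $r'=r$ but $\min(|A'|,|B|)-3=|A|-4<r'$, so the inductive hypothesis is not even available.

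For context, the standard route in the literature (Lev--Smeliansky) is different: one reduces the two-set statement to Freiman's original $3k-4$ theorem for a single set $C+C$ by a suitable embedding, rather than running a direct induction on $|A|+|B|$ with endpoint removal.
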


\vspace{5pt}
We shall also make use of the following analogue of Freiman's $3k-4$ theorem in $\mathbb{Z}_p$
(see Theorem 21.8 of Grynkiewicz~\cite{grynkiewicz-2}).

\begin{theorem}\label{3k-4_Z_p}
There is an absolute constant $\eta >0$ such that the following holds. Let $A$ and $B$ be finite nonempty subsets of $\mathbb{Z}_p$ and let $C=-(A+B)^c$ and let $r$ be a integer with $0\leq r \leq \eta p-2$. Suppose that $$|A+B|=|A|+|B|-1+r$$
and $$|A|\geq r+3 \text{, } |B|\geq r+3 \text{ and } |C|\geq r+3.$$
Then there exist arithmetic progressions $P_A$, $P_B$ and $P_c$ with the same common difference containing $A$, $B$ and $C$, respectively, such that
$$|P_A\setminus A|\leq r \text{, } |P_B \setminus B|\leq r \text{ and } |P_C \setminus C|\leq r.$$
\end{theorem}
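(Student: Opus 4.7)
The plan is to exploit the symmetric structure hidden in the hypothesis: rewriting $C=-(A+B)^c$ gives $|A|+|B|+|C|=p+1-r$ together with $0\notin A+B+C$, so $A$, $B$, $C$ play symmetric roles. I would then rectify the whole configuration into $\mathbb{Z}$, where the Lev--Smeliansky--Stanchescu Theorem~\ref{3k-4} applies, and pull the structure back. Since $p$ is prime, Kneser's theorem in $\mathbb{Z}_p$ gives no nontrivial stabilizer to worry about (the sumset is a proper subset because $|C|\geq r+3>0$), so the hypothesis $|A+B|=|A|+|B|-1+r$ with $r\le \eta p-2$ places us in a small-doubling regime; by Pl\"unnecke--Ruzsa, $|A-A|$, $|B-B|$ and $|A-B|$ are all bounded by $O(|A|+|B|)$ plus $O(r)$.

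Next comes the rectification step. The goal is to find $t\in\mathbb{Z}_p^\times$ such that $tA$, $tB$ and $-tC$ all lie inside a common short interval of $\mathbb{Z}_p$ of length $<p/3$. For $\eta$ a sufficiently small absolute constant, the Freiman--Bilu rectification principle (applied to the small-doubling set $A\cup B\cup(-C)$) produces such a $t$. Once $tA$ and $tB$ are lifted to subsets $A',B'\subset\mathbb{Z}$, the preservation of sums is automatic because $A'+B'$ fits inside $[0,2L]$ with $2L<p$, so $|A'+B'|=|A+B|$. Theorem~\ref{3k-4} then produces arithmetic progressions $P_{A'}\supset A'$ and $P_{B'}\supset B'$ with a common difference and $|P_{A'}\setminus A'|,|P_{B'}\setminus B'|\leq r$; pulling back by $t^{-1}$ yields the progressions for $A$ and $B$ in $\mathbb{Z}_p$.

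For the progression containing $C$, I would use the relation $0\notin A+B+C$ together with the structure already obtained. Since $A$ and $B$ are now confined (up to at most $r$ holes each) to an arithmetic progression with common difference $d$, the sumset $A+B$ fills all but at most $2r$ elements of a $d$-progression, so its complement $-C$ does too, up to at most $2r$ missing elements. A direct counting argument using $|C|=p-|A+B|\geq r+3$ then refines this into an arithmetic progression of common difference $d$ containing $C$ and missing at most $r$ of its elements; alternatively one may re-apply the $\mathbb{Z}$ version of Theorem~\ref{3k-4} symmetrically to the pair $(B,C)$ whose sumset size is controlled by Pl\"unnecke. The main obstacle is the rectification: one needs a genuinely quantitative $2$-isomorphism simultaneously embedding $A$, $B$ and $-C$ in a short interval of $\mathbb{Z}_p$, and the bound $r\leq\eta p-2$ with $\eta$ absolute is precisely what makes this feasible while leaving enough slack to prevent the progressions from wrapping around modulo $p$. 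The auxiliary conditions $|A|,|B|,|C|\geq r+3$ serve the same purpose in the subsequent application of Theorem~\ref{3k-4}, ensuring that each of the three sets is large enough compared to $r$ for the $3k-4$ conclusion to be non-vacuous.
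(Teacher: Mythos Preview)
The paper does not prove this theorem at all: it is quoted in Section~2 (``Prerequisites'') as Theorem~21.8 of Grynkiewicz's book, and is used as a black box. So there is no ``paper's own proof'' to compare against.

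As for your proposed argument, the overall shape (exploit the $A,B,C$ symmetry, rectify into $\mathbb{Z}$, apply Theorem~\ref{3k-4}, pull back) is natural, but the rectification step as you describe it has a genuine gap. You propose to apply Freiman--Bilu rectification to the set $A\cup B\cup(-C)$, claiming it has small doubling. But the hypotheses allow $|A|,|B|,|C|$ each to be of order $p/3$ (indeed $|A|+|B|+|C|=p+1-r$), in which case $A\cup B\cup(-C)$ may occupy a positive proportion of $\mathbb{Z}_p$, and no dilation places it inside an interval of length $<p/3$. Likewise, the Pl\"unnecke bounds you invoke become vacuous in this regime: from $|A+B|\approx 2p/3$ and $|A|\approx p/3$ one only gets $|A-A|\le 4|A|$, which exceeds $p$ and carries no information. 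So the heart of the difficulty --- handling sets of density bounded away from zero in $\mathbb{Z}_p$ --- is precisely where your rectification step breaks down.

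The actual proofs of this $3k-4$ result in $\mathbb{Z}_p$ (Serra--Z\'emor, and the treatment in Grynkiewicz) do not proceed by naive rectification; they use Fourier-analytic or isoperimetric arguments that work directly in $\mathbb{Z}_p$ and can handle the dense case. Your outline would be salvageable only in the regime where $|A|+|B|$ is a small multiple of $r$ (so that genuine rectification is available), but the theorem as stated covers much more than that.
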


\vspace{5pt}
Finally, we recall Vosper's theorem  \cite{Vosp1, Vosp2}.
\begin{theorem}\label{vosper}
Let $A$ and $B$ be finite subsets of $\mathbb{Z}_p$ with at least two elements such that $|A+B| = |A|+|B|-1 \leq p-2$. Then $A$ and $B$ are arithmetic progressions with the same common difference.
\end{theorem}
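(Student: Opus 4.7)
The plan is to reduce to Theorem~\ref{3k-4_Z_p} applied with $r = 0$, handling a few boundary cases by hand. Set $C = -(A+B)^c$, so that $|C| = p - (|A| + |B| - 1) \geq 2$ by hypothesis. In the generic case $|A|, |B|, |C| \geq 3$, Theorem~\ref{3k-4_Z_p} with $r = 0$ produces arithmetic progressions $P_A \supseteq A$, $P_B \supseteq B$, $P_C \supseteq C$ of sizes $|A|, |B|, |C|$ sharing a common difference, and this immediately forces $A$ and $B$ to be APs with the same common difference. (For primes too small for the constraint $r \leq \eta p - 2$ to hold at $r = 0$, Vosper's theorem can be verified by inspection.)

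The first boundary case is $|B| = 2$ (symmetrically $|A| = 2$). Write $B = \{b_1, b_2\}$ with $d = b_2 - b_1 \neq 0$. Inclusion--exclusion gives $|A + B| = 2|A| - |A \cap (A + d)|$, and the hypothesis $|A + B| = |A| + 1$ then forces $|A \cap (A + d)| = |A| - 1$. Since $p$ is prime and $d \neq 0$, the map $x \mapsto x + d$ is a single $p$-cycle on $\mathbb{Z}_p$; in this cyclic ordering, the number of elements of $A$ whose predecessor lies outside $A$ equals the number of maximal arcs of $A$, so this number must be $1$. Hence $A$ is a single arc, i.e.\ an arithmetic progression with common difference $d$, matching $B$.

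Finally, suppose $|A|, |B| \geq 3$ but $|C| = 2$, so $|A + B| = p - 2$ and $|A| + |B| = p - 1$. For any $c \in C$, the condition $c \notin A + B$ rewrites as $(c - B) \cap A = \emptyset$, giving $C - B \subseteq A^c$; Cauchy--Davenport then yields $|C - B| \geq |B| + 1 = |A^c|$, forcing $C - B = A^c$ and hence $|(-B) + C| = |{-B}| + |C| - 1 \leq p - 2$ (using $|B| \leq p - 3$). Applying the preceding paragraph to the pair $(-B, C)$ --- with $C$ playing the role of the size-two set --- shows that $-B$, and hence $B$, is an arithmetic progression with common difference equal to the gap $c_2 - c_1$ of $C$. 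The symmetric identity $C - A = B^c$, fed into the same argument applied to $(-A, C)$, shows that $A$ too is an arithmetic progression with the same common difference. The main obstacle worth flagging is exactly this last step: one needs both $A$ and $B$ to inherit the \emph{same} common difference, which is arranged by using the shared size-two set $C$ on both sides.
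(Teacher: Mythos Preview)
The paper does not give its own proof of Vosper's theorem: it is listed in Section~2 as a prerequisite, with references to Vosper's original papers \cite{Vosp1,Vosp2}, and is used as a black box. So there is no proof in the paper to compare yours against.

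Your argument is internally correct. Case~2 ($|B|=2$) is a clean direct computation, and Case~3 ($|C|=2$) correctly bootstraps off Case~2 via the duality $C-B=A^c$ and $C-A=B^c$, ensuring that both $A$ and $B$ inherit the \emph{same} common difference from the two-element set $C$. The generic case reduces immediately to Theorem~\ref{3k-4_Z_p} with $r=0$.

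One caveat worth flagging: Theorem~\ref{3k-4_Z_p} (the $3k-4$ theorem in $\mathbb{Z}_p$) is a much deeper result than Vosper's theorem, and in the literature its proof rests on Vosper or on machinery that subsumes it. So while your deduction is valid within the formal dependency structure of the paper's stated prerequisites, as a self-contained proof of Vosper's theorem it is circular. The hand-wave ``verified by inspection'' for primes below $2/\eta$ is also a bit loose, though harmless since only finitely many primes are involved and your Cases~2 and~3 already cover all configurations with $\min(|A|,|B|,|C|)=2$ without appealing to Theorem~\ref{3k-4_Z_p}.
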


\section{Sums in ${\mathbb Z}$}

\vspace{7pt}
\noindent
We start by proving Theorem \ref{intro_three_translates_Z} that {\em if $A$ and $B$ are finite non-empty sets of
integers with $|A| \ge |B|$, then there exist elements
$b_1,b_2,b_3 \in B$ such that $|A+\{b_1, b_2,b_3\}| \ge |A|+|B|-1.$}

\vspace{5pt}
One might imagine that a random argument cannot
be helpful in proving this, since in the case when $A$ and $B$ are intervals then two of our elements {\em must} be the
first and last elements of $B$. But, remarkably, what we will find is that if we fix those two elements and select only the third element at random, we do obtain exactly the bound we want.

\vspace{5pt}
\noindent
{\em Proof of Theorem \ref{intro_three_translates_Z}.}
Let $B$ have first element $0$ and last element $m$, and write $\pi_m: {\mathbb Z} \to {\mathbb Z}_m={\mathbb Z}/m {\mathbb Z}$ for the canonical projection of ${\mathbb Z}$ onto ${\mathbb Z}_m$. Then $A+\{0, m\}=A\cup (A+m)$ satisfies
$\pi_m(A)=\pi_m\big(A\cup (A+m)\big)= \widetilde{A}$. Although it is easy to bound the size of this set $A\cup (A+m)$, we record it as a lemma because we shall make use of it several times later on.

\begin{lemma}\label{lem8.1}
We have
\[
\bigg|A\cup (A+m) \bigg| \geq |A|+|\widetilde{A}|.
\]
\end{lemma}
\begin{proof}
Note that $(A+m)^x=A^x+m$ for every $x\in {\mathbb Z}$. Hence, if $x\in \widetilde{A}=\pi_m(A)$ then
$(A+m)^x=A^x+m\ne A^x$,
so
$
\bigg| \big[A\cup (A+m)\big]^x\bigg|=\bigg|A^x\cup (A+m)^x \bigg| \geq |A^x|+1.
$
Consequently,
\[
\bigg|A\cup (A+m)\bigg| \geq \sum_{x\in \widetilde{A}} \bigg|[A\cup (A+m)]^x\bigg| \geq \sum_{x\in \widetilde{A}} |A^x|+1 = |A|+|\widetilde{A}|,
\]
completing the proof.
\end{proof}

\vspace{5pt}
We now turn to the contribution from the `random' term. It does seem very fortunate that this meshes so well with the bound above. Again, we record it as a separate lemma for future use.

\begin{lemma}\label{lem8.2specialcase} If we choose an element $b$ from $B \setminus \{m\}$, uniformly at random, then
$$\E_{b\in B\setminus \{m\}}\bigg|(A+b) \setminus \pi_m^{-1}(\widetilde{A})\bigg| \geq
|A| \max\bigg(0,\frac{|\widetilde{B}|-|\widetilde{A}|}{|\widetilde{B}|}\bigg).$$
\end{lemma}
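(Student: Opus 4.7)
The plan is to exploit the constraint $B \subseteq [0,m]$, which comes from $\min B = 0$ and $\max B = m$. Under this constraint, the map $\pi_m$ restricted to $\{0,1,\dots,m\}$ collapses only $0$ and $m$ to the same class, and all other singletons. Since $0,m \in B$, the fibre $B^0$ equals $\{0,m\}$, while $B^x$ is either empty or a singleton for $x \in \{1,\dots,m-1\}$. Therefore $\pi_m$ restricted to $B \setminus \{m\}$ is a \emph{bijection} onto $\widetilde{B}$. This is the crucial observation: sampling $b$ uniformly from $B \setminus \{m\}$ is the same as sampling $\pi_m(b)$ uniformly from $\widetilde{B}$.

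Next, I would observe that the random variable $|(A+b) \setminus \pi_m^{-1}(\widetilde{A})|$ depends on $b$ only through $\pi_m(b)$: an element $a+b \in A+b$ fails to lie in $\pi_m^{-1}(\widetilde{A})$ precisely when $\pi_m(a) + \pi_m(b) \notin \widetilde{A}$. Grouping the $a$'s by the value $y = \pi_m(a) \in \widetilde{A}$, one obtains the clean formula
\[
\bigl|(A+b)\setminus \pi_m^{-1}(\widetilde{A})\bigr| \;=\; \sum_{\substack{y \in \widetilde{A} \\ y + \pi_m(b) \notin \widetilde{A}}} |A^y|.
\]
Combining with the uniformity from the first step and swapping the two summations,
\[
\E_{b\in B\setminus\{m\}}\bigl|(A+b) \setminus \pi_m^{-1}(\widetilde{A})\bigr| \;=\; \frac{1}{|\widetilde{B}|} \sum_{y \in \widetilde{A}} |A^y|\cdot \bigl|\widetilde{B} \setminus (\widetilde{A} - y)\bigr|.
\]

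To finish, I would use the trivial lower bound $|\widetilde{B} \setminus (\widetilde{A}-y)| \ge \max(0,|\widetilde{B}| - |\widetilde{A}|)$, valid since $|\widetilde{A}-y| = |\widetilde{A}|$. Plugging this in and summing $\sum_{y\in\widetilde{A}} |A^y| = |A|$ gives the desired bound. I do not anticipate any real obstacle: the whole argument is a double count plus the structural observation that removing exactly $m$ from $B$ uniformizes the projection, which is precisely why the statement privileges the extreme element $m$. The same template should recur in the forthcoming more general Lemma 8.2, where only the bijectivity step will need a suitable substitute.
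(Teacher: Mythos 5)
Your proof is correct and follows essentially the same route as the paper's: the bijection $\pi_m\colon B\setminus\{m\}\to\widetilde{B}$, the reduction of the event $a+b\notin\pi_m^{-1}(\widetilde{A})$ to $\widetilde{a}+\widetilde{b}\notin\widetilde{A}$, and the trivial bound on the resulting probability. The only cosmetic difference is that you group the elements of $A$ by fibre and swap sums explicitly, whereas the paper sums $\Prob_{b}(a+b\notin\pi_m^{-1}(\widetilde{A}))$ directly over $a\in A$; these are the same computation.
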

\begin{proof}
Clearly, $\pi_m: B\setminus\{m\} \to \widetilde{B}$ is a bijection. Also, for $a\in A$  we have

\begin{equation*}
        \Prob_{b \in B\setminus \{m\}} \bigg(a+b\not \in \pi^{-1}_m(\widetilde{A})\bigg) =\Prob_{\widetilde{b}\in \widetilde{B}}\bigg(\widetilde{a}+\widetilde{b}\not \in \widetilde{A}\bigg)
        \geq \max\bigg(0,\frac{|\widetilde{B}| - |\widetilde{A}|}{|\widetilde{B}|}\bigg).
\end{equation*}
so
\begin{equation*}
        \E_{b \in B\setminus \{m\}} \bigg|(A+b)\setminus \pi^{-1}_m(\widetilde{A})\bigg| = \sum_{a\in A}\Prob_{b\in B\setminus\{m\}}\bigg(a+b\not \in \pi_m^{-1}(\widetilde{A})\bigg)
        \geq |A|\max\bigg(0,\frac{|\widetilde{B}| - |\widetilde{A}|}{|\widetilde{B}|}\bigg),
\end{equation*}
as claimed.
\end{proof}

\vspace{5pt}
To prove Theorem~\ref{intro_three_translates_Z}, we combine these two lemmas. We have

\begin{eqnarray*}
\E_{b \in B\setminus \{m\}} \bigg|A
+\{0,b,m\}\bigg|&\geq&|A|+|\widetilde{A}| +|A| \max\bigg(0,\frac{|\widetilde{B}| - |\widetilde{A}|}{|\widetilde{B}|}\bigg)\\
&=&|A|+|\pi_m(A)|+|A|\max\bigg(0, \frac{|B|-1-|\pi_m(A)|}{|B|-1} \bigg).
\end{eqnarray*}
If $|\pi_m(A)| \geq |B|-1$ then
\begin{equation*}
    \E_{b\in B \setminus \{m\}}\bigg|A+\{0,b,m\}\bigg| \geq |A|+|\pi_m(A)|
    \geq |A|+|B|-1.
\end{equation*}
On the other hand, if $|\pi_m(A)| \leq |B|-1$ then
\begin{eqnarray*}
    \E_{b\in B \setminus \{m\}}\bigg|A+\{0,b,m\}\bigg| \hspace{-17pt} &&\geq |A|+|\pi_m(A)|+ |A|\frac{|B|-1-|\pi_m(A)|}{|B|-1}\\
    \hspace{-17pt}&&\geq |A|+|\pi_m(A)|+|B|-1-|\pi_m(A)|
    =|A|+|B|-1,
\end{eqnarray*}
completing our proof of Theorem~\ref{intro_three_translates_Z}. \hfill{$\square$}

\vspace{5pt}
In fact, the proof of Theorem~\ref{intro_three_translates_Z} above shows the following stronger result, which we record here. Let $A$ and $B$ be finite non-empty subsets of integers with $|A|\geq |B|$ and $B$ having smallest element $0$ and greatest element $m$.  Then we have
\begin{equation}\label{thm1.0.0}
     \max_{b_1,b_2,b_3 \in B}\bigg|A+\{b_1,b_2,b_3\}\bigg| \geq \E_{b\in B\setminus \{m\}} \bigg| A +\{0,b,m\} \bigg| \geq |A|+|B|-1.
\end{equation}

\vspace{7pt}
We now collect together some variants of the results above that we shall need at later points in the paper. We urge the reader to skip these (rather pedestrian) variations, returning to them only when they are actually needed in the sequel. To avoid clutter and some repetition, we leave the proofs for the Appendix.

First, we investigate when equality is attained in inequality \eqref{thm1.0.0}, that is when $|A+\{b_1,b_2,b_3\}| \leq |A|+|B|-1$ for all $b_1,b_2,b_3 \in B$.

\begin{theorem}\label{equality_four_translates}
  Let $A$ and $B$ be finite non-empty subsets of $\mathbb{Z}$ such that $|A| = |B|$, with $\min B = 0$ and
  $\max B = m$. Suppose that when we choose an element $b$ of $B \setminus \{0,m\}$ uniformly at random we have
\begin{equation}\label{thm23.0.0}
    \E_{b \in B \setminus \{0,m\}} \bigg|A+\{0,b,m\}\bigg| \leq |A|+|B|-1.
\end{equation}
Then $A$ and $B$ are arithmetic progressions with the same common difference.

\vspace{7pt}
In particular, suppose that when we choose any three elements $b_1, b_2, b_3$ of $B$ we have
\[
\bigg|A+\{b_1,b_2,b_3\}\bigg| \le |A|+|B|-1.
\]
Then $A$ and $B$ are arithmetic progressions with the same common difference.
\end{theorem}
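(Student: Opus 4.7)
The approach is to revisit the proof of Theorem~\ref{intro_three_translates_Z} and track the equality cases. Write $n=|A|=|B|$ and assume $n\ge 3$ (the degenerate cases $n\le 2$ are handled directly). The bound used in that proof was built as
\[
|A+\{0,b,m\}| \;\ge\; |A|+|\widetilde{A}|\;+\;\bigl|(A+b)\setminus \pi_m^{-1}(\widetilde{A})\bigr|,
\]
combining Lemma~\ref{lem8.1} with the inclusion $\pi_m^{-1}(\widetilde{A})\supset A\cup(A+m)$. Averaging the random term over $b\in B\setminus\{0,m\}$, the counting in Lemma~\ref{lem8.2specialcase}, but with $\widetilde{B}$ replaced by $\widetilde{B}\setminus\{0\}$, yields
\[
\E_{b\in B\setminus\{0,m\}}\bigl|(A+b)\setminus \pi_m^{-1}(\widetilde{A})\bigr| \;\ge\; \frac{n\,(n-1-|\widetilde{A}|)}{n-2}
\]
whenever $|\widetilde{A}|\le n-1$; the point is that $\widetilde{a}\in \widetilde{A}$ forces $0\in \widetilde{A}-\widetilde{a}$, so removing $0$ from $\widetilde{B}$ costs one element of $(\widetilde{A}-\widetilde{a})\cap\widetilde{B}$, which is what changes the denominator from $n-1$ to $n-2$.

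Plugging the hypothesis $\E\le 2n-1$ into the combined lower bound forces $|\widetilde{A}|\ge n-1$; and $|\widetilde{A}|=n$ is impossible by Lemma~\ref{lem8.1} alone, since it would already give $|A+\{0,m\}|\ge 2n$. Hence $|\widetilde{A}|=n-1$ and every inequality above is tight. Equality in Lemma~\ref{lem8.1} means each fibre $A^x$ (for $x\in\widetilde A$) is an arithmetic progression with common difference $m$; since $|A|=n$ and $|\widetilde{A}|=n-1$, exactly one fibre is a doubleton $\{y,y+m\}$ (call it fibre $j$) and all others are singletons. Equality in the other step means $A+b\subset A\cup(A+m)$ for every $b\in B\setminus\{0,m\}$; projecting modulo $m$ gives $\widetilde{A}+\widetilde{b}=\widetilde{A}$ for every $\widetilde{b}\in\widetilde{B}\setminus\{0\}$.

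The subgroup step is now short. Let $H=\langle\widetilde{B}\setminus\{0\}\rangle\le\mathbb{Z}_m$. The previous equation makes $\widetilde{A}$ a union of $H$-cosets, so $|H|$ divides $n-1$; and $\widetilde{B}\subset H$, so $|H|\ge n-1$. Thus $|H|=n-1$, $H$ is the unique subgroup of $\mathbb{Z}_m$ of that order (the multiples of $d:=m/(n-1)$), $\widetilde{B}=H$ --- so $B=\{0,d,2d,\dots,(n-1)d\}$ is the desired arithmetic progression --- and $\widetilde{A}$ is a single coset $\alpha+H$.

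The step I expect to be most delicate is transferring this information back to $\mathbb{Z}$, since a priori each singleton-fibre element $a_i\equiv \alpha+id\pmod m$ can sit anywhere in its residue class (differing by arbitrary multiples of $m$). The cleanest way around this is to track the \emph{doubleton} fibre: applying $A+kd\subset A\cup(A+m)$ for each $k\in\{1,\dots,n-2\}$, the pair $\{y+kd,\,y+kd+m\}$ has to land in a singleton fibre of $A$ (because $(j+k)\bmod (n-1)\ne j$), whose total contribution to $A\cup(A+m)$ is precisely the pair $\{a_{(j+k)\bmod(n-1)},\,a_{(j+k)\bmod(n-1)}+m\}$. Matching minima forces $a_{(j+k)\bmod(n-1)}=y+kd$, and letting $k$ range over $\{1,\dots,n-2\}$ recovers $A=\{y,\,y+d,\,y+2d,\,\dots,\,y+(n-1)d\}$, an arithmetic progression with common difference $d$. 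Finally, the \emph{in particular} clause is immediate, since an upper bound for every triple $b_1,b_2,b_3$ forces the same bound on the expectation in the main statement.
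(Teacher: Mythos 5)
Your proof is correct, but it takes a genuinely different route from the paper's. The paper's argument is much shorter: since $b=0$ attains the minimum of $|A+\{0,b,m\}|$ over $b\in B\setminus\{m\}$, deleting it can only increase the average, so $\E_{b\in B\setminus\{0,m\}}|A+\{0,b,m\}|\ge\E_{b\in B\setminus\{m\}}|A+\{0,b,m\}|\ge |A|+|B|-1$ by \eqref{thm1.0.0}; the hypothesis forces equality throughout, whence $A+b\subset A\cup(A+m)$ for every $b\in B$, so $|A+B|=|A|+|B|-1$ and Freiman's $3k-4$ theorem (Theorem~\ref{3k-4}) finishes the job. You reach the same key intermediate fact ($A+b\subset A\cup(A+m)$ for all $b\in B\setminus\{0,m\}$) by instead sharpening Lemma~\ref{lem8.2specialcase} to the averaging set $B\setminus\{0,m\}$ — the observation that $0\in(\widetilde{A}-\widetilde{a})\cap\widetilde{B}$ correctly yields the numerator $n-1-|\widetilde{A}|$ over the denominator $n-2$, which pins $|\widetilde{A}|=n-1$ — and then, rather than invoking Freiman, you extract the structure directly: tightness in Lemma~\ref{lem8.1} identifies the fibre profile (one doubleton, the rest singletons), translation-invariance of $\widetilde{A}$ gives the subgroup argument identifying $\widetilde{B}$ as the order-$(n-1)$ subgroup and $\widetilde{A}$ as a coset, and chasing the doubleton fibre lifts this to $\mathbb{Z}$. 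The trade-off is clear: the paper's proof is a few lines but leans on a substantial black box, while yours is longer but self-contained (in effect reproving the case $r=0$ of the $3k-4$ theorem that is needed here). Both correctly dispose of the degenerate small cases and of the "in particular" clause.
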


Next we have a version of Theorem \ref{intro_three_translates_Z}.

\begin{theorem}\label{technical_three_translates_Z}
Let $A$ and $B$ be finite non-empty subsets of $\mathbb{Z}$, with $\min B  = 0$ and $\max B =m$. Then
$$\E_{b\in B \setminus \{m\}}\bigg|A+\{0,b,m\} \bigg| \ge |A|+|\pi_m(A)|+|A|\max\bigg(0, \frac{|B|-1-|\pi_m(A)|}{|B|-1} \bigg).$$
In particular, if $|A|\geq |B|-1$ we have
$$\E_{b\in B \setminus \{m\}}\bigg|A+\{0,b,m\}\bigg| \ge |A|+|B|-1$$
and if $|A|\leq |B|-1$ we have
$$\E_{b\in B \setminus \{m\}}\bigg|A+\{0,b,m\}\bigg| \ge 2|A| + |\pi_m(A)| \frac{|B|-1-|A|}{|B|-1}.$$
\end{theorem}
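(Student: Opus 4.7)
The proof is essentially a repackaging of the argument used for Theorem~\ref{intro_three_translates_Z}: the main inequality is exactly the intermediate bound appearing in that proof after combining Lemmas~\ref{lem8.1} and~\ref{lem8.2specialcase}, and the two special cases then follow by elementary algebra. So I do not foresee any substantive obstacle; the only task is to isolate the chain of inequalities cleanly and then carry out two short manipulations.

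First I would observe that $A \cup (A+m) \subset \pi_m^{-1}(\widetilde{A})$, since $\pi_m(A+m)=\pi_m(A)=\widetilde{A}$. Hence for any $b\in B\setminus\{m\}$,
$$\bigl|A+\{0,b,m\}\bigr| \;=\; \bigl|(A\cup(A+m))\cup(A+b)\bigr| \;\geq\; \bigl|A\cup(A+m)\bigr| + \bigl|(A+b)\setminus \pi_m^{-1}(\widetilde{A})\bigr|,$$
because any element of the second term cannot lie in the first. Applying Lemma~\ref{lem8.1} to bound $|A\cup(A+m)|\geq |A|+|\pi_m(A)|$, taking expectations over $b\in B\setminus\{m\}$, and applying Lemma~\ref{lem8.2specialcase} to the second term (noting that $|\widetilde{B}|=|B|-1$ because $\pi_m$ restricted to $B\setminus\{m\}$ is a bijection onto $\widetilde{B}$) immediately yields the main inequality of the theorem.

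For the two particular cases I would argue by elementary manipulation of the main bound. If $|A|\geq |B|-1$: when $|\pi_m(A)|\geq |B|-1$ the max vanishes and the bound becomes $|A|+|\pi_m(A)|\geq |A|+|B|-1$; when $|\pi_m(A)|<|B|-1$ the bound reads
$$|A|+|\pi_m(A)|+|A|\cdot\frac{|B|-1-|\pi_m(A)|}{|B|-1}\geq |A|+|\pi_m(A)|+\bigl(|B|-1-|\pi_m(A)|\bigr)=|A|+|B|-1,$$
using $|A|\geq |B|-1$ to bound the fraction from below by $|B|-1-|\pi_m(A)|$. If $|A|\leq |B|-1$ then $|\pi_m(A)|\leq |A|\leq |B|-1$, so the max is realised by the positive branch, and the algebraic identity
$$|A| + |\pi_m(A)| + |A|\cdot\frac{|B|-1-|\pi_m(A)|}{|B|-1} \;=\; 2|A| + |\pi_m(A)|\cdot\frac{|B|-1-|A|}{|B|-1},$$
obtained by expanding and collecting terms, gives the claimed form.
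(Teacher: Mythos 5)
Your proposal is correct and follows essentially the same route as the paper: the main inequality is obtained by combining Lemma~\ref{lem8.1} with Lemma~\ref{lem8.2specialcase} (using the disjointness of $A\cup(A+m)\subset\pi_m^{-1}(\widetilde{A})$ from $(A+b)\setminus\pi_m^{-1}(\widetilde{A})$, exactly as in the proof of Theorem~\ref{intro_three_translates_Z}), and the two special cases follow by the same elementary case analysis and algebraic identity used in the paper's Appendix.
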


We also have a simple variant of Lemma~\ref{lem8.2specialcase}.

\begin{lemma}\label{lem8.2} Let $A$, $B$ and $A_1$ be finite non-empty subsets of $\mathbb{Z}$, with
  $\min B =0$ and $\max B =m$. Then with $\widetilde{A}=\pi_m(A)$ and $\widetilde{B}=\pi_m(B)$ we have
\[
\ \ \ \ \ \ \ \ \ \ \E_{b\in B \setminus \{m\}} \bigg|(A_1+b) \setminus \pi_m^{-1}(\widetilde{A})\bigg| \geq
|A_1| \max\bigg(0,\frac{|\widetilde{B}|-|\widetilde{A}|}{|\widetilde{B}|}\bigg).
\ \ \ \ \ \ \ \ \ \ \square
\]
\end{lemma}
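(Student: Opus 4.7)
The plan is to mimic the proof of Lemma~\ref{lem8.2specialcase} almost verbatim, noting that the role of $A$ in that proof actually splits into two unrelated roles: on one hand $A$ is the set of elements we translate by~$b$ (which contributes the factor $|A|$ out front), and on the other hand $A$ is only used through $\widetilde{A}=\pi_m(A)$ to define the forbidden region $\pi_m^{-1}(\widetilde{A})$. Because these two roles are independent, nothing changes if we translate a different set $A_1$ while still forbidding the fibres over $\widetilde{A}$.

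First I would recall that, since $\min B = 0$ and $\max B = m$, the two elements of $B$ lying in the fibre over $0 \in \mathbb{Z}_m$ are exactly $0$ and $m$, so the restriction $\pi_m: B\setminus\{m\}\to\widetilde{B}$ is a bijection. This is the only place the hypotheses on $B$ enter.

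Next, for each fixed $a\in A_1$ I would compute the probability that $a+b$ lies outside $\pi_m^{-1}(\widetilde{A})$ when $b$ is sampled uniformly from $B\setminus\{m\}$. Using the bijection above, this probability equals the probability that $\widetilde{a}+\widetilde{b}\notin \widetilde{A}$ when $\widetilde{b}$ is sampled uniformly from $\widetilde{B}$, where $\widetilde{a}=\pi_m(a)$. Now $\widetilde{a}+\widetilde{B}$ is a translate of $\widetilde{B}$ in $\mathbb{Z}_m$ and has size $|\widetilde{B}|$, so it meets $\widetilde{A}$ in at most $|\widetilde{A}|$ points; hence the probability is at least $\max\bigl(0,(|\widetilde{B}|-|\widetilde{A}|)/|\widetilde{B}|\bigr)$.

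Finally, writing $|(A_1+b)\setminus\pi_m^{-1}(\widetilde{A})|$ as $\sum_{a\in A_1} \mathbbm{1}[a+b\notin\pi_m^{-1}(\widetilde{A})]$, taking expectations, and exchanging the sum and the expectation, I would pull out the factor $|A_1|$ and obtain the claimed bound. There is no genuine obstacle here beyond bookkeeping, since the argument of Lemma~\ref{lem8.2specialcase} never really used that the translated set and the projected set coincide; the only thing to check carefully is that $\pi_m^{-1}(\widetilde{A})$ is defined using $A$ and not $A_1$, which is exactly what the statement demands.
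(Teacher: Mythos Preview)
Your proposal is correct and is exactly the approach the paper intends: the paper presents Lemma~\ref{lem8.2} as ``a simple variant of Lemma~\ref{lem8.2specialcase}'' with no separate proof, precisely because the argument of Lemma~\ref{lem8.2specialcase} goes through verbatim once one observes (as you do) that the translated set and the set defining the forbidden fibres play independent roles. Your justification that $\pi_m\colon B\setminus\{m\}\to\widetilde{B}$ is a bijection (using $B\subset[0,m]$) and your linearity-of-expectation computation are both sound.
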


\vspace{7pt}
Our next lemma follows from Lemma~\ref{lem8.2} by simple manipulations.

\begin{lemma}\label{residue.vs2}
Let $A$ and $B$ be finite non-empty subsets of $\mathbb{Z}$. Let $\min(B)=0$ and $\max(B)=m$, and suppose that $|B|-1=|\widetilde{B}| \geq 8|\widetilde{A}|$, where $\widetilde{A}=\pi_m(A)$ and $\widetilde{B}=\pi_m(B)$. Then
\[
  \ \ \ \ \ \ \ \ \ \ \ \ \ \E_{b_2,b_3 \in B\setminus\{m\}} \bigg|A+\{0,b_2,b_3\}  \bigg|\geq 2.5|A|.
  \ \ \ \ \ \ \ \ \ \ \ \ \ \ \square
\]
\end{lemma}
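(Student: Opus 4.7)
The plan is to bound $|A+\{0,b_2,b_3\}|=|A\cup (A+b_2)\cup (A+b_3)|$ from below by the sizes of three pairwise disjoint pieces, carved out along the fibres of $\pi_m$, and then take expectations by applying Lemma~\ref{lem8.2} twice. The role of the hypothesis $|\widetilde{B}|\ge 8|\widetilde{A}|$ will be to guarantee that each of the two random pieces covers at least a constant fraction of $|A|$ new fibres.

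Concretely, write $\widetilde{A}=\pi_m(A)$, and for each fixed $b_2\in B\setminus\{m\}$ set $T_2=\widetilde{A}\cup(\widetilde{A}+\widetilde{b_2})$, so that $|T_2|\le 2|\widetilde{A}|$. I consider the three sets
\[
P_1=A,\qquad P_2(b_2)=(A+b_2)\setminus \pi_m^{-1}(\widetilde{A}),\qquad P_3(b_2,b_3)=(A+b_3)\setminus \pi_m^{-1}(T_2).
\]
They live in pairwise disjoint collections of $\pi_m$-fibres: $P_1\subset\pi_m^{-1}(\widetilde{A})$, $P_2(b_2)\subset\pi_m^{-1}(\widetilde{A}+\widetilde{b_2})\setminus\pi_m^{-1}(\widetilde{A})$, and $P_3(b_2,b_3)$ avoids $\pi_m^{-1}(T_2)$, which contains the fibres hosting both other pieces. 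Since all three are contained in $A\cup(A+b_2)\cup(A+b_3)$, the disjoint-union bound gives
\[
|A+\{0,b_2,b_3\}|\ \ge\ |A|+|P_2(b_2)|+|P_3(b_2,b_3)|.
\]

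Lemma~\ref{lem8.2} with $A_1=A$ yields $\E_{b_2\in B\setminus\{m\}}|P_2(b_2)|\ge |A|\,(|\widetilde{B}|-|\widetilde{A}|)/|\widetilde{B}|\ge \tfrac{7}{8}|A|$, using $|\widetilde{B}|\ge 8|\widetilde{A}|$. For the last piece, fix $b_2$ and apply Lemma~\ref{lem8.2} again, this time with the set $A\cup(A+b_2)$ playing the role of $A$ in that lemma (so its projection becomes $T_2$) and with $A_1=A$; this produces
\[
\E_{b_3\in B\setminus\{m\}}|P_3(b_2,b_3)|\ \ge\ |A|\,\frac{|\widetilde{B}|-|T_2|}{|\widetilde{B}|}\ \ge\ |A|\,\frac{|\widetilde{B}|-2|\widetilde{A}|}{|\widetilde{B}|}\ \ge\ \tfrac{3}{4}|A|,
\]
uniformly in $b_2$. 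Averaging over $b_2$ as well and adding,
\[
\E_{b_2,b_3\in B\setminus\{m\}}|A+\{0,b_2,b_3\}|\ \ge\ \Big(1+\tfrac{7}{8}+\tfrac{3}{4}\Big)|A|\ =\ \tfrac{21}{8}|A|\ \ge\ 2.5|A|.
\]
The only delicate point is arranging the three pieces so that pairwise disjointness is automatic from the fibre structure, and the cushion between $21/8$ and $5/2$ means the hypothesis $|\widetilde{B}|\ge 8|\widetilde{A}|$ is comfortable; once the decomposition is in place, the proof reduces to two invocations of Lemma~\ref{lem8.2} and arithmetic, so I do not expect a genuine obstacle.
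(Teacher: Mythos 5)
Your proof is correct, and it is exactly the route the paper intends: the lemma is stated as following ``from Lemma~\ref{lem8.2} by simple manipulations'' (its proof is not written out explicitly), and your fibre-wise decomposition into $A$, $(A+b_2)\setminus\pi_m^{-1}(\widetilde{A})$ and $(A+b_3)\setminus\pi_m^{-1}(\widetilde{A}\cup(\widetilde{A}+\widetilde{b_2}))$, with two applications of Lemma~\ref{lem8.2} giving $1+\tfrac{7}{8}+\tfrac{3}{4}=\tfrac{21}{8}\geq 2.5$, is precisely those manipulations. The disjointness of the three pieces is indeed automatic from the fibre structure, and the independence of $b_2$ and $b_3$ justifies the iterated expectation, so there is no gap.
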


Finally, we have a strengthening of Theorem~\ref{intro_three_translates_Z}.

\begin{theorem}\label{strengthening_of_three_translates}
  Let $A$ and $B$ be finite non-empty subsets of $\mathbb{Z}$
with $|A| = |B|$, with $\min B  = 0$ and $\max B =m$. Then
$$\E_{b\in B \setminus \{m\}}\bigg|A+\{0,b,m\}\bigg| \ge |A|+|B|-1+\max\bigg(0,\frac{(2|\pi_m(A)|-m)(
m-(|B|-1))-1}{|B|-1} \bigg).$$
\end{theorem}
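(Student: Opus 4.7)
The plan is to follow the structure of the proof of Theorem~\ref{intro_three_translates_Z}, but to extract extra savings from a \emph{global} lower bound on the escape quantity $|\widetilde{B} \setminus (\widetilde{A}-\widetilde{a})|$ rather than relying only on the pointwise bound used there. Writing $n = |B|-1 = |\widetilde{B}|$, $k = |\widetilde{A}|$, and $\ell = m - n$, the decomposition $|A+\{0,b,m\}| \ge |A \cup (A+m)| + |(A+b) \setminus \pi_m^{-1}(\widetilde{A})|$, together with Lemma~\ref{lem8.1} and the expectation computation behind Lemma~\ref{lem8.2specialcase}, gives
\[
\E_{b \in B \setminus \{m\}} |A + \{0, b, m\}| \ge |A| + k + \frac{1}{n}\sum_{\widetilde{a} \in \widetilde{A}} |A^{\widetilde{a}}|\, |\widetilde{B} \setminus (\widetilde{A}-\widetilde{a})|.
\]
When $(2k-m)\ell \le 1$ the $\max$-term in the theorem is zero and the conclusion is Theorem~\ref{intro_three_translates_Z} with $|A|=|B|$, so I only need to handle the regime $(2k-m)\ell \ge 2$; there the target reduces to
\[
\sum_{\widetilde{a} \in \widetilde{A}} |A^{\widetilde{a}}|\, |\widetilde{B} \setminus (\widetilde{A}-\widetilde{a})| \ge n(n-k) + (2k-m)\ell - 1.
\]

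The key new ingredient is a dual-counting identity. Interchanging the order of summation,
\[
\sum_{\widetilde{a} \in \widetilde{A}} |\widetilde{B} \setminus (\widetilde{A}-\widetilde{a})| = \sum_{\widetilde{c} \in \widetilde{A}^c} |\widetilde{A} \cap (\widetilde{c} - \widetilde{B})|,
\]
and since the ambient group $\mathbb{Z}_m$ has $m$ elements, inclusion-exclusion forces each inner term $\ge \max(0, k + n - m) = \max(0, k - \ell)$. Summing over the $m-k$ elements of $\widetilde{A}^c$ yields the new bound
\[
\sum_{\widetilde{a} \in \widetilde{A}} |\widetilde{B} \setminus (\widetilde{A}-\widetilde{a})| \ge (m-k)\max(0, k - \ell),
\]
which is strong precisely in the regime where the strengthening is active.

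To reintroduce the weights $|A^{\widetilde{a}}|$, I would split $|A^{\widetilde{a}}| = 1 + (|A^{\widetilde{a}}|-1)$, both summands being non-negative since $\widetilde{a} \in \widetilde{A}$ forces $|A^{\widetilde{a}}| \ge 1$. The $1$-part is controlled by the global sum bound just derived; the $(|A^{\widetilde{a}}|-1)$-part has total weight $|A|-k$ and is handled by the pointwise bound $|\widetilde{B} \setminus (\widetilde{A}-\widetilde{a})| \ge \max(0, n-k)$ from the proof of Theorem~\ref{intro_three_translates_Z}. A short case check (using $k \le |A| = n+1$) confirms that whenever the strengthening is active one has $k \ge \ell$, so both estimates combine to
\[
\sum_{\widetilde{a} \in \widetilde{A}} |A^{\widetilde{a}}|\, |\widetilde{B} \setminus (\widetilde{A}-\widetilde{a})| \ge (m-k)(k-\ell) + (|A|-k)\max(0, n-k).
\]
Substituting $m = n + \ell$ and $|A| = n+1$ and expanding, the right-hand side exceeds the target $n(n-k) + (2k-m)\ell - 1$ by $n - k + 1$ when $n \ge k$, and meets it with equality in the boundary case $k = n + 1$. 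The main obstacle is spotting the dual-counting step: the pointwise bound used in Theorem~\ref{intro_three_translates_Z} is intrinsically blind to the $(2k-m)$ gain, whereas the global sum version captures it cleanly; after that everything is routine algebra.
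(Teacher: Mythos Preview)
Your proof is correct and is essentially the same argument as the paper's, just organised differently. The paper partitions $A=A_1\sqcup A_2$ with $A_2$ containing exactly one point per fibre, then bounds $\E_b|(A_2+b)\setminus\pi_m^{-1}(\widetilde A)|$ by summing over targets $z\in\widetilde A^c$ the probability $\Prob_{\widetilde b}(z\in\widetilde b+\widetilde A)\ge(n+k-m)/n$, and bounds the $A_1$ contribution via the pointwise estimate of Lemma~\ref{lem8.2}. Your split $|A^{\widetilde a}|=1+(|A^{\widetilde a}|-1)$ is exactly this partition in disguise: the ``$1$'' part coincides with the $A_2$ contribution, and your dual-counting identity $\sum_{\widetilde a\in\widetilde A}|\widetilde B\setminus(\widetilde A-\widetilde a)|=\sum_{\widetilde c\in\widetilde A^c}|\widetilde A\cap(\widetilde c-\widetilde B)|$ is precisely the paper's switch from sources to targets; the inclusion--exclusion bound $|\widetilde A\cap(\widetilde c-\widetilde B)|\ge k+n-m$ is identical, and the resulting bound $(m-k)(k-\ell)+(|A|-k)\max(0,n-k)$ matches the paper's exactly (as does the final algebra, including the excess $n-k+1$ for $k\le n$ and equality for $k=n+1$).
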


All of the above results are proved in the Appendix.

\vspace{5pt}

To end this section, we return to {\em two} translates. As pointed out above, even if $|A|=|B|$, there may
not be two points in $B$ whose sum with $A$ gives at least $|A|+|B|-1$ points. When {\it would} two
translates suffice?
In the first version of this paper we proved the following result.
\begin{theorem}\label{two-translates}
There is a constant $K>0$ such that for any subsets $A$ and $B$ of $\mathbb{Z}$ with $|B|=n \geq 2$ and
$|A| \ge Kn\log n$ there exist elements $b_1,b_2$ in $B$
such that
\[
|A+\{b_1,b_2\}| \geq |A|+|B|-1.
\]
\end{theorem}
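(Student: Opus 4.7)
The plan is to argue by contradiction. Suppose $|A+\{b_1,b_2\}|\le M+n-2$ for every pair of distinct $b_1,b_2\in B$, where $M=|A|$. Equivalently, $|A\cap(A+d)|\ge M-n+2$ for every $d\in D:=(B-B)\setminus\{0\}$. First I would dispose of an easy case via Lemma~\ref{lem8.1}: writing $m=\max B-\min B$, if $|\pi_m(A)|\ge n-1$ then $|A+\{\min B,\max B\}|=|A\cup(A+m)|\ge M+n-1$, an immediate contradiction. So from now on $|\pi_m(A)|\le n-2$ and $A$ lives in at most $n-2$ residue classes modulo~$m$.

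Next I would compute the additive energy $E(A,B):=\sum_d |A\cap(A+d)|\,r_B(d)$, where $r_B(d):=|\{(b,b')\in B^2:b-b'=d\}|$. The hypothesis and the contribution from $d=0$ give $E(A,B)\ge n^2M - n(n-1)(n-2)$, very close to the trivial upper bound $n^2M$. Switching to the representation function $r(x):=|\{(a,b)\in A\times B:a+b=x\}|$ and using $E(A,B)=\sum_x r(x)^2$ together with $\sum_x r(x)=Mn$, this forces the mass of $r$ to concentrate on a popular sumset $U^*:=\{x:r(x)>n/2\}$ of size $M+O(n)$, and the incidence set $\Gamma:=\{(a,b)\in A\times B:a+b\in U^*\}$ has density at least $1-O(n/M)$ inside $A\times B$.

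I would then apply Shao's theorem (Theorem~\ref{thm_shao}) to $A$, $B$, and $\Gamma$, obtaining subsets $A_0\subset A$, $B_0\subset B$ of sizes $\ge(1-\varepsilon)M$ and $\ge(1-\varepsilon)n$ with $|A_0+B_0|\le|U^*|+\varepsilon n\le M+O(n)+\varepsilon n$. The logarithmic factor in $|A|\ge Kn\log n$ enters precisely here: for Freiman's $3k-4$ theorem (Theorem~\ref{3k-4}) applied to $A_0,B_0$ to force a common AP structure, the defect $r:=|A_0+B_0|-|A_0|-|B_0|+1$ must satisfy $r\le\min(|A_0|,|B_0|)-3\approx n$, which forces $\varepsilon=O(n/M)=O(1/(K\log n))$; Shao's theorem then requires the density of $\Gamma$ to exceed $1-\delta(\varepsilon)$, and the bound $1-O(1/(K\log n))$ from Step~2 is just what is needed. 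Theorem~\ref{3k-4} then places $A_0$ and $B_0$ inside arithmetic progressions with the same common difference, with at most $r$ missing elements each.

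Finally, lifting back to $A$ and $B$ (whose symmetric differences with $A_0,B_0$ are $\le\varepsilon M$ and $\le\varepsilon n$) and choosing $b_1=\min B$, $b_2=\max B$ in the common progression, a direct computation using the AP structure gives $|A\cap(A+(b_2-b_1))|\le M-n+1$, hence $|A+\{b_1,b_2\}|\ge M+n-1$, contradicting the assumption. The main obstacle is the delicate coordination in the third step between Shao's quantitative dependence $\delta(\varepsilon)$ and the Freiman defect constraint $r\le n$; the $\log n$ factor in the hypothesis is precisely the slack required to reconcile these two, and replacing it by any smaller function of $n$ would break the argument at this point.
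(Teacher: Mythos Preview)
Your approach is genuinely different from the paper's, which simply quotes the Konyagin--Lev theorem on the function $f_A(s)=|(A+s)\setminus A|$: that result gives directly that $\max_{s\in S} f_A(s)\ge |S|$ whenever $|S|<c|A|/\log|A|$, and applying it to $S=\{b_2-b_1:b_1<b_2,\ b_i\in B\}$ (so $|S|\ge n-1$) yields the theorem in one line.

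Your route via energy, Shao, and Freiman has a real quantitative gap at the point you yourself flag as ``the main obstacle''. Two concrete problems:

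\smallskip
\textbf{(i)} The claim $|U^*|\le M+O(n)$ is not justified. From the contradiction hypothesis one gets $|(A+b)\setminus(A+b')|\le n-2$ for every $b,b'\in B$, hence $|A+B|\le M+(n-1)(n-2)$; the energy argument gives nothing sharper. So the defect $r=|A_0+B_0|-|A_0|-|B_0|+1$ is only bounded by $O(n^2)+\varepsilon M$, which already exceeds the threshold $\min(|A_0|,|B_0|)-3\approx n$ needed for Theorem~\ref{3k-4}, regardless of how small $\varepsilon$ is.

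\smallskip
\textbf{(ii)} Even setting (i) aside, Theorem~\ref{thm_shao} is stated for \emph{fixed} parameters $\varepsilon,K$, with $\delta=\delta(\varepsilon,K)$ coming (ineffectively) from the arithmetic removal lemma. Since $N\le\min(|A|,|B|)=n$ and $|A+_\Gamma B|\ge M-O(n^2)$, the hypothesis $|A+_\Gamma B|\le KN$ forces $K\ge M/n\ge K\log n$, which is unbounded in $n$; and you need $\varepsilon=O(n/M)\to 0$ together with $\delta(\varepsilon,K)\gtrsim n/M$. There is no reason for $\delta$ to decay only linearly in $\varepsilon$ (indeed the removal-lemma dependence is tower-type), so the ``delicate coordination'' you describe cannot be carried out. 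The $\log n$ factor in the hypothesis does not buy you the slack you claim; what is really needed is a result with polynomial (indeed linear) dependence of $\delta$ on $\varepsilon$ and uniformity in $K$, which Theorem~\ref{thm_shao} does not provide.
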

One of the referees was kind enough to point out that this result is an immediate
consequence of a theorem of Konyagin and Lev~\cite{KoLe} about the Erd\H{o}s--Heilbronn--Olson
function $f_A(s)=|(A+s)\setminus A|$. Writing $f_A(S)$ for $\max \{f_A(s): s\in S\}$,
Konyagin and Lev proved that {\em there is an absolute constant $c>0$ such that $f_A(S)\ge |S|$
holds for all finite sets $A\subset {\mathbb Z}$, $S\subset {\mathbb N}$ with $|A|>1$ and $|S|<c|A|/\log |A|$.}

\vspace{5pt}
Now, trivially, $|A+\{b_1, b_2\}|=|A|+f_A(b_2-b_1)$, so
\[
\max_{b_1, b_2 \in B} |A+\{b_1, b_2\}| = |A|+f_A(S),
\]
where $S=\{b_2-b_1: b_1, b_2 \in B, b_1<b_2\}$. Since $|S|\ge n-1$, Theorem~\ref{two-translates}
follows.

\vspace{5pt}
In fact, Lev~\cite{Levtwo} and Huicochea~\cite{Hui} proved analogues of the Konyagin--Lev theorem
for ${\mathbb Z}_p$, which imply the analogue of Theorem~\ref{two-translates} in ${\mathbb Z}_p$.
We are grateful to the referee for drawing our attention to these results of Konyagin, Lev and
Huicochea.

\vspace{5pt}

\section{Proof of Theorem \ref{implication_shao1-prov}}

Our aim in this section is to prove Theorem \ref{implication_shao1-prov}. The main
ingredient of the proof is Theorem \ref{thm_shao}, and our strategy is to
apply it several times in succession. In fact, we shall prove Theorem \ref{implication_shao1-prov}
in the following slightly stronger form; the easy deduction of Theorem \ref{implication_shao1-prov}
is by changing the values of $\varepsilon$ and $c$ slightly, as required.

\vspace{7pt}

\noindent {\bf Theorem $8'$.} {\em For all $K$ and $\varepsilon >0$ there is an integer $c$ such that the following holds.
Let $A$ and $B$ be finite subsets of an abelian group. Then there are subsets $A^* \subset A$
and $B^* \subset B$, with $|A^*|\ge (1-\varepsilon)|A|$ and
$|B^*|\ge (1-\varepsilon)|B|$, such that if we select points $b_1,\hdots,b_c$ uniformly at random from
$B^*$ then
\[
\hspace{51pt}      \E_{b_1, \hdots, b_c \in B^*} |A^*+ \{b_1,\hdots,b_c\}|
 \geq \min\big( (1-\varepsilon)|A^*+ B^*|,\ K |A^*|,\ K |B^*| \big). \hspace{60pt}
\]
}

\vspace{-5pt}
\noindent
Note that Theorem $8'$ asserts the existence of a pair of large subsets $A^*$ and $B^*$ with a property that is
`intrinsic' to them.

\vspace{5pt}
\noindent
{\em Proof of Theorem~$8'$.}
Fix $\varepsilon>0$  and $K>0$, where we assume that $\varepsilon$ is sufficiently small and
$K$ is sufficiently large.
Pick $s=\lfloor \frac{50K}{\varepsilon} \rfloor$. Let
$\delta$ be given by Theorem \ref{thm_shao} with parameters $\frac{\varepsilon}{s}, 10K$.
Also pick  $\alpha$ sufficiently small that
$(1-t)^{\frac{10K}{\delta}}\leq 1-\frac{10K}{2\delta}t$ for $t \leq \alpha$. Finally pick $c\geq \frac{10K}{\delta}$ sufficiently large that
$(1-(1-\alpha)^n) \geq \frac{1-\varepsilon}{1-\varepsilon/2} \geq \frac{1}{2}$. Let  $n_A$ and $n_B$ be the sizes of $A$ and $B$ respectively.

We shall examine a process in which we repeatedly apply Theorem \ref{thm_shao} in order to construct a decreasing sequence of $s+1$ pairs of sets $(A,B)=(A_0,B_0),(A_1,B_1), \hdots ,(A_s,B_s),$
satisfying $A_i \subset A_{i-1} \text{ and } B_i \subset B_{i-1}$ and
$|A_i|\geq (1-\varepsilon/s)^i n_A$ and  $|B_i|\geq (1-\varepsilon/s)^i n_B $.
Fix $i<s$ and assume that the pair of sets $(A_i,B_i)$ has already been
constructed. We shall either stop the process at step $i$ or construct the pair of
sets $(A_{i+1},B_{i+1})$.

Let $A_i+B_i=C_i^+\sqcup C_i^-$ be the partition into `popular'  and `unpopular' elements  given by $ C_i^+=\{c \in A_i+B_i: \
|(c-A_i)\cap B_i| \geq \alpha |B_i|\}$, and
$C_i^-=\{c \in A_i+B_i: \ |(c-A_i)\cap B_i| < \alpha |B_i|\}$.
Also, let the partition $A_i \times B_i=\Gamma_i\sqcup \Gamma_i^c$ be given by $\Gamma_i=\{(a,b) \in A_i \times
B_i: \ a+b \in C_i^+  \} \subset A_i \times B_i$, and $\Gamma_i^c=\{(a,b) \in A_i
\times B_i : \ a+b \in C_i^-  \} \subset A_i \times B_i$,
so that $A_i+_{\Gamma_i}B_i=C_i^+$  and  $A_i+_{\Gamma^c_i}B_i=C_i^-$.
Finally, for
each $x \in A_i+B_i$ set
\[
r_i(x)=|(x-A_i)\cap B_i|=|\{(a,b)\in A_i \times B_i: \
x=a+b\}|,
\]
so that $ \sum_x r_i(x)=|A_i||B_i|.$
We stop this process `early', at step $i$, if $$|\Gamma_i|<(1-\delta)|A_i||B_i| \text{ or } |A_i+_{\Gamma_i}B_i|> 10K \min(|A_i|,|B_i|).$$
Otherwise, we apply Theorem \ref{thm_shao} with parameters
$\varepsilon/s, 10K$ to the pair of sets $(A_i,B_i)$.  Thus, we produce a pair of sets $(A_{i+1},B_{i+1})$, satisfying
$A_{i+1} \subset A_i$,  $B_{i+1} \subset B_i$, \
$|A_{i+1}| \geq (1-\varepsilon/s)|A_i|$, \  $|B_{i+1}|\geq (1-\varepsilon/s)|B_i|$
and $|A_{i+1}+B_{i+1}| \leq |A_i+_{\Gamma_i}B_i|+\frac{\varepsilon}{s} \min(|A_i|,|B_i|)$.
We shall analyse separately the cases in which the process continues until the end and in
which the process stops before that.

\vspace{7pt}
{\bf Claim A.}
{\em
Suppose the process
stops early, say at step $j<s$. Then the pair of sets $(A_j,B_j)$ has the desired properties.
}

\begin{proof}
\vspace{7pt}
\noindent
\textbf{Case 1:} Consider first the
case when $|C^+_j|=|A_j+_{\Gamma_j}B_j|> 10K \min(|A_j|,|B_j|).$ For $x\in C_j^+$,
by construction we have that $r_j(x)=|(x-A_j)\cap B_j|\geq \alpha
|B_j|.$ If we choose elements $b_i$ uniformly at random
from $B_j$, for $1\leq i \leq c$, then
\begin{eqnarray*}
    \E_{b_1, \hdots, b_c \in B}|A_j+\{b_1, \hdots, b_c\})|
    &\geq& \sum_x \Prob_{b_1, \hdots, b_c \in B}(x \in  A_j+\{b_1, \hdots, b_c\})
      \geq \sum_{x \in C^+_j} 1-[1-\Prob_{b \in B}(x \in A_j+b)]^c\\
    &=& \sum_{x \in C^+_j} 1-[1- r_j(x)/|B_j|]^c
      \geq \sum_{x \in C^+_j} 1-(1-\alpha)^c\\
    &\geq& (1-(1-\alpha)^c) |C^+_j|
      \geq (1-(1-\alpha)^c) 10K \min(|A_j|,|B_j|) \\
    &\geq& 5K \min(|A_j|,|B_j|)
      \geq K \min(|A|,|B|).
\end{eqnarray*}

\vspace{7pt}
\noindent
\textbf{Case 2:} Consider now the case when $|\Gamma_j|<(1-\delta)|A_j||B_j|.$ By construction,
$\sum_{x\in C_j^-}r_j(x) \geq \delta|A_j||B_j|.$ Moreover, for
$x\in C_j^-$ we have $r_j(x)=|(x-A_j)\cap B_j|\leq \alpha
|B_j|.$
In particular, we deduce that
\[
1-[1- r_j(x)/|B_j|]^c \geq 1-[1- r_j(x)/|B_j|]^{\frac{10K}{\delta}} \geq \frac{5K}{\delta}r_j(x)/|B_j|.
\]
Again, if we choose elements $b_i$ uniformly at
random from $B_j$, for $1\leq i \leq c$, then
\begin{align*}
    \E_{b_1, \hdots, b_c \in B}|A_j+\{b_1, \hdots, b_c\}|
    & \geq \sum_x \Prob_{b_1, \hdots, b_c \in B}(x \in A_j+\{b_1, \hdots, b_c\})
      = \sum_{x \in C^-_j} 1-[1-\Prob_{b \in B}(x \in A_j+b)]^c\\
    & = \sum_{x \in C^-_j} 1-[1- r_j(x)/|B_j|]^c
      \geq \sum_{x \in C^-_j} \frac{5K}{\delta}r_j(x)/|B_j|\\
    & \geq \frac{5K}{\delta}\delta |A_j||B_j|/|B_j|
      \geq K|A|.
\end{align*}
We conclude the pair of sets $(|A_j|,|B_j|)$ has the desired properties, so Claim A is proved.
\end{proof}

We now turn to the case when the process does not stop early.

\vspace{7pt}
{\bf Claim B.} {\em
Suppose that the process continues until the terminal step $s$.
Then there is an index $j\leq s$ such that the  pair of sets $(A_j,B_j)$ has the desired
properties.}
\begin{proof}
Note that $|A_{1}+B_{1}| \leq |A_0+_{\Gamma_0}B_0|+\frac{\varepsilon}{s} \min(|A_0|,|B_0|) \leq 11K \min(|A_0|,|B_0|).$
Moreover, $11K \min(|A|,|B|)= 11K \min(|A_0|,|B_0|)\geq |A_{1}+B_{1}| \geq \hdots \geq |A_{s}+B_{s}| \geq 0.$ Therefore
$|A_{j+1}+B_{j+1}|\geq |A_{j}+B_{j}|-\frac{11K}{s-1}\min(|A|,|B|)$ for some index $j$,
$1 \leq j \leq s$.  We shall
show that the pair  $(A_j,B_j)$ has the desired properties.
Indeed, by construction,
\[
|A_{j}+_{\Gamma_j}B_{j}|+\frac{\varepsilon}{s}
\min(|A|,|B|) \geq |A_{j}+_{\Gamma_j}B_{j}|+\frac{\varepsilon}{s}
\min(|A_j|,|B_j|) \geq |A_{j+1}+B_{j+1}|.
\]
It follows that
\[
|A_j+_{\Gamma_j}B_j|\geq |A_{j}+B_{j}|- \big(\frac{11K}{s-1}+\frac{\varepsilon}{s} \big)\min(|A|,|B|) \geq \big(1- \frac{20K}{s} \big)|A_{j}+B_{j}|.
\]
If we choose elements $b_i$ uniformly at random from $B_j$, for $1\leq i \leq c$ then
\begin{equation*}
\begin{split}
    \E_{b_1, \hdots, b_c \in B}|A_j+\{b_1, \hdots, b_c\}|
    & \geq \sum_x \Prob_{b_1, \hdots, b_c \in B}(x \in A_j+\{b_1, \hdots, b_c\})
      \geq \sum_{x \in C^+_j} 1-[1-\Prob_{b \in B}(x \in A_j+b)]^c\\
    & = \sum_{x \in C^+_j} 1-[1- r_j(x)/|B_j|]^c
      \geq \sum_{x \in C^+_j} 1-(1-\alpha)^c\\
    & = (1-(1-\alpha)^c) |C^+_j|
      =  (1-(1-\alpha)^c) |A_j+_{\Gamma_j}B_j|\\
    & \geq (1-(1-\alpha)^c)(1- \frac{20K}{s}) |A_{j}+B_{j}|
      \geq (1-\varepsilon)|A_{j}+B_{j}|.
\end{split}
\end{equation*}
Thus the pair $(|A_j|,|B_j|)$ has the desired properties, so Claim B is proved.
\end{proof}

\vspace{5pt}
This concludes the proof of Theorem~\ref{implication_shao1-prov}: whether the process stops early or does not, the  pair of sets $(A_j,B_j)$ has the desired
properties. \hfill{$\square$}
%\end{proof}

\vspace{5pt}

We now digress briefly  to point out that Roth's theorem is a direct consequence of Theorem~$8'$. Recall that Roth's theorem~\cite{Roth} states that
{\em for every $\alpha>0$ there is an integer $k_0$ such that if $k\ge k_0$ then every set $A\subset [k]$ of size at least $\alpha k$ contains $3$ terms in arithmetic progression.}

\vspace{5pt}
\noindent
{\em Proof of Roth's Theorem.}
%\begin{proof}
Fix $\alpha>0$ (where we may assume that $\alpha$ is sufficiently small). Pick $c$ as given by
Theorem~$10'$ with parameters $\frac{\alpha}{4}, \frac{2}{\alpha}$, and set
$k_0=2c[\alpha(1-\alpha)]^{-1}$.

\vspace{5pt}
Suppose for a contradiction that there exists a set $A$ that is free of three-term arithmetic
progressions, of density $\alpha$ in $[1,k]$ for some $k \geq k_0$. We apply Theorem~$10'$
with parameters $\frac{\alpha}{4}, \frac{2}{\alpha}$ to construct sets $A_1, A_2 \subset A$ of size at least $(1-\frac{\alpha}{4})|A|$ with the property that if we select $a_1, \hdots, a_c$ uniformly at random
from $A_2$ then $$\E_{a_1, \hdots, a_c \in A_2}\bigg|A_1+\{a_1, \hdots, a_c\}\bigg| \geq \min\bigg(\big(1-\frac{\alpha}{4}\big)|A_1+A_2|,\frac{2}{\alpha} |A| \bigg).$$
On the one hand, note that, as the only solutions to the equation $x+y=2z$ in $A$ are $x=y=z \in A$, the left-hand side is bounded from above by $$|A_1+A_2|-|A_1\cap A_2|+c\geq \E_{a_1, \hdots, a_c \in A_2}\bigg|A_1+\{a_1, \hdots, a_c\}\bigg|.$$
On the other hand, as $|A| \geq \alpha k$ and $|A_1+A_2| \leq 2k$, the right-hand side is bounded from below by
$$(1-\frac{\alpha}{4})|A_1+A_2| \geq |A_1+A_2| - \frac{\alpha}{2}k \geq |A_1+A_2|-\frac{|A|}{2}$$
Combining the two inequalities above we obtain $$ 2c \geq 2|A_1\cap A_2| -|A|.$$
Finally, as $A_1, A_2 \subset A$ satisfy $|A|\geq \alpha k$ and $|A_1|,|A_2| \geq (1-\frac{\alpha}{4})|A|$, we conclude
$$2c \geq \alpha (1-\alpha)k $$
which gives the desired contradiction. \hfill{$\square$}
%\end{proof}

\vspace{10pt}
In later sections we shall make considerable use of Theorem~\ref{implication_shao1-prov}.

\section{Sums in ${\mathbb Z}_p$: the Case of an Interval}

Our aim in this section is to prove Theorem \ref{intro_four_translates_Zp}. Although we are `only' considering
the case when $B$ is an interval, nevertheless the proof is not nearly as simple as one would expect.
Actually, in this section we also treat the case of general $B$ when $B$ is much smaller than $A$,
Theorem \ref{intro_general_three_translates_Zp}, as this seems to go hand-in-hand with the case when $B$ is an
interval. When in the next section we turn to the `most interesting' case, of general $B$ of comparable size to $A$, the
proofs will have their genesis in the proofs in this section,  although several more ideas will still be needed.

\vspace{5pt}
We need two technical results. In proving the first one we will make use of the
following corollary of Theorem~\ref{3k-4_Z_p}.

\begin{corollary}\label{Freiman_Zp}
Given $\beta, \gamma >0$ there is an $\varepsilon>0$ such that the following holds.
  Let $A$ and $B$ be subsets of $\mathbb{Z}_p$. Suppose that
\begin{equation}\label{eq21.-2}
    2\leq \min(|A|,|B|)\text{ and }|A|+|B| \leq (1-\beta)p
\end{equation}
and
\begin{equation}\label{eq21.-1}
    |A+B| \leq |A|+|B|-1 + \varepsilon \min(|A|,|B|).
\end{equation}
Then there are arithmetic progressions $P$ and $Q$ with the same common difference, containing $A$ and $B$, respectively,
 such that $$|P \Delta A|\leq  \gamma \min(|A|,|B|)  \text{ and }  |Q \Delta B|\leq \gamma \min(|A|,|B|).$$
\end{corollary}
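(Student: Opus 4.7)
Set $r := |A+B| - (|A|+|B|-1)$. The Cauchy--Davenport theorem, applied using $|A|+|B|-1 < p$ from \eqref{eq21.-2}, gives $r \ge 0$, and hypothesis \eqref{eq21.-1} gives $r \le \varepsilon \min(|A|,|B|)$. The plan is to invoke Theorem~\ref{3k-4_Z_p}, whose conclusion---arithmetic progressions $P_A \supset A$ and $P_B \supset B$ with the same common difference and $|P_A\setminus A|, |P_B\setminus B|\le r$---is essentially what the corollary asks for: since $A\subset P_A$ we have $|P_A\Delta A| = |P_A\setminus A| \le r$, and taking $\varepsilon \le \gamma$ then delivers $|P_A \Delta A| \le \gamma \min(|A|,|B|)$ and similarly for $B$. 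The whole task therefore reduces to verifying the three hypotheses of Theorem~\ref{3k-4_Z_p}.

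We will fix $\varepsilon \le \min(\gamma,\beta/4,\eta/2,1/4)$, where $\eta$ is the absolute constant from Theorem~\ref{3k-4_Z_p}, and assume $p$ exceeds a constant depending only on $\beta$ and $\eta$ (the finitely many smaller primes can be absorbed by shrinking $\varepsilon$ further). The inequality $r \le \eta p - 2$ follows from $r \le \varepsilon p \le \eta p/2$ once $p \ge 4/\eta$. For the complement $C = -(A+B)^c$ one has $|C| = p - |A+B| \ge \beta p + 1 - r$, and the inequality $\beta p + 1 - r \ge r+3$ rearranges to $2r \le \beta p - 2$, which follows from $r \le \varepsilon p \le \beta p/4$ once $\beta p \ge 4$.

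The only delicate point is the requirement $|A|,|B| \ge r+3$. If $\min(|A|,|B|) \ge 1/\varepsilon$, then $r \le \varepsilon \min(|A|,|B|) \le \min(|A|,|B|)/4$, which immediately yields $r+3 \le \min(|A|,|B|)$. If instead $\min(|A|,|B|) < 1/\varepsilon$, then $r \le \varepsilon \min(|A|,|B|) < 1$ forces $r=0$; by \eqref{eq21.-2} we have $\min(|A|,|B|) \ge 2$ and (using integrality together with $|A|+|B|\le(1-\beta)p$) also $|A+B| = |A|+|B|-1 \le p-2$, so Vosper's theorem (Theorem~\ref{vosper}) directly tells us that $A$ and $B$ are themselves arithmetic progressions with the same common difference. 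In this case we simply take $P_A=A$, $P_B=B$, giving $|P_A\Delta A| = |P_B\Delta B| = 0$.

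The main obstacle is not any deep idea: once the parameters are arranged, all of the estimates are routine. The single point that demands care is the case split by the size of $\min(|A|,|B|)$, because the rigid $r+3$ hypothesis in Theorem~\ref{3k-4_Z_p} can fail for small sets; Vosper's theorem is what we use to bridge that regime, exploiting the fact that when $\min(|A|,|B|)$ is small the slack $\varepsilon \min(|A|,|B|) < 1$ forces $r=0$ and hence the Cauchy--Davenport bound is actually attained.
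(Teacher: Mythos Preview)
Your proposal is correct and follows essentially the same approach as the paper: verify the three size hypotheses of Theorem~\ref{3k-4_Z_p} when $\min(|A|,|B|)$ is large enough, and fall back on Vosper's theorem when it is not (since then $r=0$). The paper organises the case split slightly differently---it sets $r=\lfloor\varepsilon\min(|A|,|B|)\rfloor$ and uses Vosper to dispose of the case $r=0$ up front---but the logic and ingredients are the same.
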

\begin{proof}
  Fix $2^{-10}> \beta >\gamma>0$. Let $\eta$ be as given in Theorem~\ref{3k-4_Z_p}, and let
  $\varepsilon=2^{-10} \eta \min(\beta, \gamma)$. Finally, write $r=\lfloor\varepsilon\min(|A|,|B|)\rfloor$.\\

\vspace{5pt}
Theorem~\ref{vosper}, together with \eqref{eq21.-2} and \eqref{eq21.-1}, implies that we may assume that
\begin{equation}\label{eq21.0}
    1 \leq  r \leq \varepsilon p.
\end{equation}
In particular, we have
\begin{equation}\label{eq21.1}
    r \leq \eta p-2,
\end{equation}
and also
\begin{equation}\label{eq21.15}
    r \leq 2^{-10}\min(|A|,|B|) \leq \min(|A|,|B|)-3.
\end{equation}
If we let
\begin{equation*}\label{eq21.2}
    C=-(A+B)^{c}
\end{equation*}
then by \eqref{eq21.-2} and \eqref{eq21.-1} we see that
\begin{equation*}\label{eq21.3}
    |C|=p-|A+B|\geq p-(1-\beta+\varepsilon)p=(\beta-\varepsilon)p \geq 2^{-1}\beta p,
\end{equation*}
and by \eqref{eq21.0} we obtain
\begin{equation}\label{eq21.4}
    r\leq 2^{-9}|C| \leq |C|-3.
\end{equation}
Theorem~\ref{3k-4_Z_p} and  \eqref{eq21.1}, \eqref{eq21.15} and \eqref{eq21.4} now imply that there exist arithmetic
progressions $P$ and $Q$ with the same common difference, containing $A$ and $B$, respectively, such that
\begin{equation*}\label{eq21.5}
    |A\Delta P|\leq r \text{ and } |B \Delta Q|\leq r.
\end{equation*}
So we obtain
$$|A\Delta P|\leq \gamma \min(|A|,|B|) \text{ and } |B\Delta Q| \leq \gamma \min(|A|,|B|).$$

\vspace{5pt}
This concludes the proof of Corollary~\ref{Freiman_Zp}.
\end{proof}

\vspace{5pt}

Here then is the first technical result that we shall need in the proof of Theorem \ref{intro_four_translates_Zp}.
The reader should note that here the set $A$ has {\it small} intersection with a relevant
interval, in contrast to the usual requirement.

\begin{theorem}\label{stab_2pts}
  For all $\beta, \gamma >0$ there exist $\varepsilon, \alpha>0$ such that the following holds. Let $A$ and $B$ be subsets of $\mathbb{Z}_p$. Suppose that
\begin{equation}\label{eq37.-2}
    2\leq |B| \leq \alpha |A| \text{ and } |A|+|B|\leq (1-\beta)p
\end{equation}
and
\begin{equation}\label{eq37.-1}
    \max_{B' \in B^{(2)}}|A+B'| \leq |A|+|B|-1+\varepsilon |B|.
\end{equation}
Then there exist arithmetic progressions $P$ and $Q$ with the same common difference, and sizes at least $2^{10}|B|$ and
at most $\lfloor(1+\gamma)|B|\rfloor$, respectively, such that
$|A \cap  P| \leq \gamma|B|$ and  $B \subset Q$.
\end{theorem}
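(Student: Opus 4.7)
The plan is to extract, from the pairwise sumset hypothesis, enough structure to (a) conclude $B$ lies in a short AP $Q$, and (b) exhibit a long AP $P$ with the same common difference in the complement of $A$. The hypothesis $|A+B'|\le|A|+|B|-1+\varepsilon|B|$ for every $B'\in B^{(2)}$ is equivalent to saying $|(A+d)\setminus A|\le|B|(1+\varepsilon)-1$ for every $d\in B-B$, i.e.\ $A$ is approximately invariant under translation by any difference of two elements of $B$.

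Iterating the pairwise hypothesis over all elements of $B$ (adding them one at a time to the running sumset, each contributing at most $|B|(1+\varepsilon)-1$ fresh points compared to the first translate) gives $|A+B|\le|A|+|B|^2(1+\varepsilon)$, a small-doubling condition under $|B|\le\alpha|A|$. I would then apply Theorem~\ref{implication_shao1-prov} with $K$ slightly above $1$ and $\varepsilon'$ small to obtain subsets $A^*\subset A$ and $B^*\subset B$ of proportion $\ge 1-\varepsilon'$ for which $\E|A+\{b_1,\dots,b_c\}|\ge(1-\varepsilon')|A^*+B^*|$ when $b_1,\dots,b_c$ are drawn uniformly from $B$ (the $K|A|,K|B|$ options in the minimum exceed the pairwise-derived upper bound). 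Since iterating the pairwise hypothesis bounds this expectation above by $|A|+(c-1)|B|(1+\varepsilon)$, we obtain $|A^*+B^*|\le|A^*|+|B^*|-1+\varepsilon''|B^*|$ for a $\varepsilon''$ we can make arbitrarily small by tuning $\alpha,\varepsilon,\varepsilon'$. Corollary~\ref{Freiman_Zp} then yields APs $P_A^*\supset A^*$, $P_B^*\supset B^*$ with common difference $d$ and $|P_A^*\setminus A^*|,|P_B^*\setminus B^*|\le\gamma_0|B|$ for some $\gamma_0<\gamma$, furnishing the shared direction $d$ of $P$ and $Q$.

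To produce $B\subset Q$, I extend $P_B^*$ to absorb $B\setminus B^*$: for any $b\in B\setminus B^*$, pairing with $b^*\in B^*\cap P_B^*$ closest to $b$, the hypothesis $|(A+(b-b^*))\setminus A|\le|B|(1+\varepsilon)$ combined with $A$ being approximately the short AP $P_A^*$ in direction $d$ forces $b-b^*$ to be a small multiple of $d$; otherwise the translate $A+(b-b^*)$ would destroy $A$'s AP structure in direction $d$, creating too much new material. Quantifying the allowed multiple, $b$ sits in a bounded extension of $P_B^*$, yielding $|Q|\le(1+\gamma)|B|$. For $P$: since $P_A^*$ is an AP of length $\le|A|+\gamma_0|B|$, the complement $\mathbb{Z}_p\setminus P_A^*$ is an AP with common difference $d$ of length $\ge p-|A|-\gamma_0|B|\ge\beta p$ by $|A|+|B|\le(1-\beta)p$; for $\alpha$ small relative to $\beta$ this exceeds $2^{10}|B|$, and we select $P$ as any length-$2^{10}|B|$ subprogression, so that $|A\cap P|\le|A\setminus P_A^*|\le(\varepsilon'/\alpha+\gamma_0)|B|\le\gamma|B|$ after tuning.

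The main obstacle is calibrating the parameters of Theorem~\ref{implication_shao1-prov} so that the $(c-1)|B|$ iteration error (where $c$ depends on $K,\varepsilon'$ and cannot be taken to be $2$) fits inside the $\varepsilon''|B^*|$ tolerance needed by Corollary~\ref{Freiman_Zp} to give $\gamma_0<\gamma$; this forces $\alpha$ to be chosen very small relative to $\varepsilon$ and $\gamma$. A secondary challenge is making the "small multiple of $d$" argument in the $Q$-extension step quantitative: one must bound precisely how much $A$'s approximate AP structure along $d$ is disrupted by shifts $d'$ that differ from small multiples of $d$, which typically requires a Pl\"unnecke-type propagation of approximate invariance in $\mathbb{Z}_p$.
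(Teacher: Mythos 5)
Your overall strategy --- running Theorem~\ref{implication_shao1-prov} against the iterated two-point bound and feeding the output into Corollary~\ref{Freiman_Zp} --- breaks down in the unbalanced regime this theorem lives in, for three concrete reasons. First, the hypotheses give only $|B|\le\alpha|A|$ with no bound the other way, so $|A|/|B|$ is unbounded (e.g.\ $|B|=2$, $|A|=p/4$); for any fixed constant $K$ the term $K|B^*|$ in the minimum of Theorem~\ref{implication_shao1-prov} is then far below $|A|\le|A^*+B^*|$, the minimum is attained at $K|B^*|$, and the conclusion $\E|A+\{b_1,\dots,b_c\}|\ge K|B|$ is weaker than the trivial bound $|A|$ --- the theorem yields nothing. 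Second, even where it does bite, your comparison gives $|A^*+B^*|\le|A^*|+|B^*|-1+(c-2)|B|+O(\varepsilon'|A|)$, and the slack $(c-2)|B|$ with $c\ge 3$ can never fit inside the tolerance $\varepsilon_1\min(|A^*|,|B^*|)\approx\varepsilon_1|B|$ of Corollary~\ref{Freiman_Zp}: both quantities scale with $|B|$, so shrinking $\alpha$ (which only changes the ratio $|A|/|B|$) does not help, contrary to your proposed calibration. Third, the step $|A\setminus A^*|\le\varepsilon'|A|\le(\varepsilon'/\alpha)|B|$ reverses an inequality: $|B|\le\alpha|A|$ gives $|A|\ge|B|/\alpha$, not $\le$, and $\varepsilon'|A|$ can dwarf $\gamma|B|$, so taking $P$ inside the complement of $P_A^*$ does not give $|A\cap P|\le\gamma|B|$. (Your extension step forcing $B\subset Q$ is also left as an unquantified sketch.)

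The paper's proof avoids Theorem~\ref{implication_shao1-prov} entirely. From the two-point hypothesis it extracts, for each $d\in B-B$, a set $A'_d\subset A'$ of size at least $|A'|-((1+\varepsilon)|B|-1)$ with $A'_d+d\subset A'$, iterates this over the $n$-fold sumsets $n(B-B)$, covers $\mathbb{Z}_p$ by roughly $p/(|B-B|-1)$ such iterates via Cauchy--Davenport, and double-counts the resulting pairs $(a,a+d)$ inside $A'\times A'$ to force $|B-B|\le(2+o(1))|B|$. Corollary~\ref{Freiman_Zp} is then applied to the pair $(B,-B)$ --- two sets of equal size $|B|$, so the tolerance is at the right scale and $B\subset Q$ comes for free from its conclusion. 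The progression $P$ is found by a first-moment argument: the bound $\E_{d\in 2^{10}(B-B)}|(A+d)\setminus A|\le 2^{11}|B|$ rewrites as an average over $x\in A^c$ of $|(x-2^{10}(B-B))\cap A|$, and since $|A^c|\ge\beta p$ vastly exceeds the exceptional set of size $O(\gamma^{-1}|B|)=O(\gamma^{-1}\alpha p)$, some window $x-2^{10}(B-B)$ meets $A$ in at most $\gamma|B|$ points. You would need to replace the middle of your argument with something of this flavour for the proof to go through.
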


\begin{proof}
Fix $2^{-10}>\beta>\gamma>0$. Choose $\varepsilon'$ to be the output of Corollary~\ref{Freiman_Zp} with input $\beta, \gamma$. Choose $\varepsilon=2^{-3}\varepsilon'$ and $\alpha=\min(2^{-5}\beta \varepsilon, 2^{-23}\beta \gamma )$.  \\

\vspace{5pt}
Define
$$A'=
\begin{cases}
A \text{ if } |A| < p/2 \\
A^c \text{ if } |A| \geq p/2.
\end{cases}
$$
By \eqref{eq37.-2} and \eqref{eq37.-1} we deduce
\begin{equation}\label{eq37.0}
    2\leq |B| \leq \frac{\alpha}{\beta} |A'| \leq |A'| \leq p/2.
\end{equation}
and 
\begin{equation}\label{eq37.0.0.5}
    \max_{B' \in B^{(2)}}|A'+B'| \leq |A'|+|B|-1+\varepsilon |B|.
\end{equation}
In particular, we have
\begin{equation*}\label{eq37.0.1}
    p-1\geq |B-B|-1 \geq 2.
\end{equation*}
Define
\begin{equation*}\label{eq37.0.2}
    m=\big\lfloor \frac{p}{|B-B|-1} \big\rfloor \geq 1
\end{equation*}
and note that
\begin{equation}\label{eq37.0.3}
    m+1 \geq \frac{2|A'|}{|B-B|-1}.
\end{equation}
By the Cauchy-Davenport theorem, for $1\leq n \leq m $
we have
\begin{equation}\label{eq37.0.5}
    |n(B-B)| \geq n(|B-B|-1) \text{ and } |(m+1)(B-B)|=p.
\end{equation}
Therefore,  there exists a partition
\begin{equation}\label{eq37.0.6}
    \mathbb{Z}_p=C_1 \sqcup \hdots \sqcup C_m \sqcup C_{m+1}
\end{equation}
such that for $1\leq n \leq m$ we have
\begin{equation}\label{eq37.0.7}
   |C_n| \geq |B-B|-1
\end{equation}
and for $1\leq n \leq m+1$ we have
\begin{equation}\label{eq37.0.9}
    C_n \subset n(B-B).
\end{equation}
By \eqref{eq37.0.0.5}, for every $c \in B-B$ there exists a set $A'_c$  of size
\begin{equation*}\label{eq37.1}
    |A'_c| \geq |A'|-((1+\varepsilon) |B|-1)
\end{equation*}
such that
\begin{equation*}\label{eq37.2}
    A'_c \subset A' \text{ and } A'_c+c\subset A'.
\end{equation*}
It follows that for every $n \in \mathbb{N}$ and every $c \in n (B-B)$ there is a set
$A'_{n,c} $ of size
\begin{equation}\label{eq37.3}
    |A'_{n,c}| \geq |A'|-n((1+\varepsilon) |B|-1)
\end{equation}
such that
\begin{equation}\label{eq37.4}
     A'_{n,c} \subset A' \text{ and } A'_{n,c}+c\subset A'.
\end{equation}
By \eqref{eq37.0.6}, \eqref{eq37.0.9} and \eqref{eq37.4} we have
\begin{equation*}\label{eq37.5}
     \bigsqcup_{n \in [m+1]} \bigsqcup_{c \in C_n} \bigsqcup_{a \in A'_{n,c}}\{(a,a+c)\} \subset A'\times A'.
\end{equation*}
In particular, we deduce
\begin{equation}\label{eq37.5.5}
     |A'|^2 \geq \sum_{n \in [m+1]}\sum_{c \in C_n} \sum_{a \in A'_{n,c}}1.
\end{equation}
On the one hand,  \eqref{eq37.0.6}, \eqref{eq37.3} and \eqref{eq37.5.5} imply
\begin{equation*}\label{eq37.5.7}
     |A'|^2 \geq p(|A'|-(m+1)((1+\varepsilon)|B|-1)).
\end{equation*}
and \eqref{eq37.0} further implies
\begin{equation}\label{eq37.5.8}
     m+1 \geq \frac{\beta}{4\alpha} .
\end{equation}
On the other hand, \eqref{eq37.0.7}, \eqref{eq37.3} and \eqref{eq37.5.5}  imply
\begin{eqnarray}\label{eq37.6}
         |A'|^2
         &\geq& \sum_{n \in [m]} (|B-B|-1)\max\bigg(0,|A'|-n((1+\varepsilon)|B|-1)\bigg)\\
         &=& \sum_{n \in [m]} ((1+\varepsilon)|B|-1)(|B-B|-1)\max\bigg(0,\frac{|A'|}{(1+\varepsilon)|B|-1}-n\bigg).
         %&\geq 2^{-1}((1+\varepsilon)|B|-1)(|B-B|-1)\min\bigg(m(\frac{2|A'|}{(1+\varepsilon)|B|-1}-m-1), (\frac{|A'|}{(1+\varepsilon)|B|-1}-2)^2  \bigg)
\end{eqnarray}
We distinguish two cases.

\vspace{7pt}
{\bf Case A: }{\em Suppose
\begin{equation}\label{eq37.7}
m+1 \leq \frac{|A'|}{(1+\varepsilon)|B|-1}.
\end{equation}
}
In this case, from inequalities \eqref{eq37.0.3}, \eqref{eq37.5.8} and \eqref{eq37.7} we deduce that
\begin{equation}\label{eq37.8}
\sum_{n \in [m]}\max\bigg(0,\frac{|A'|}{(1+\varepsilon)|B|-1}-n\bigg)
\ge \big(2-\frac{8\alpha}{\beta}\big) \frac{|A'|}{|B-B|-1}\bigg(\frac{|A'|}{(1+\varepsilon)|B|-1}-\frac{|A'|}{|B-B|-1}\bigg).
\end{equation}
Combining \eqref{eq37.6} and \eqref{eq37.8}, we obtain
\begin{equation}\label{eq37.9}
\begin{split}
1 \geq (2-\frac{8\alpha}{\beta}) (1- \frac{(1+\varepsilon)|B|-1}{|B-B|-1}).
\end{split}
\end{equation}

\vspace{7pt}
{\bf Case B: }{\em Suppose
\begin{equation}\label{eq37.11}
m+1 > \frac{|A'|}{(1+\varepsilon)|B|-1}.
\end{equation}
}
In this case,
by \eqref{eq37.0} and \eqref{eq37.11} we deduce
\begin{equation}\label{eq37.12}
    \begin{split}
        \sum_{n \in [m]}\max\bigg(0,\frac{|A'|}{(1+\varepsilon)|B|-1}-n\bigg) &=
        \sum_{n \in \mathbb{N}}\max\bigg(0,\frac{|A'|}{(1+\varepsilon)|B|-1}-n\bigg)\\
        &\geq 2^{-1}\frac{|A'|}{(1+\varepsilon)|B|-1}(\frac{|A'|}{(1+\varepsilon)|B|-1}-1)\\
        &\geq (2^{-1}-\frac{4\alpha}{\beta}) (\frac{|A'|}{(1+\varepsilon)|B|-1})^2
    \end{split}
\end{equation}
By combining \eqref{eq37.6} and \eqref{eq37.12}, after simplifying we obtain
\begin{equation}\label{eq37.13}
    1\geq (2^{-1}-\frac{4\alpha}{\beta}) \frac{|B-B|-1}{(1+\varepsilon)|B|-1}.
\end{equation}
In both cases, by \eqref{eq37.9} and \eqref{eq37.13}, after rearranging we get
\begin{equation}\label{eq37.14}
\begin{split}
|B-B|-1
&\leq (2+\frac{16 \alpha}{\beta})((1+\varepsilon)|B|-1)\\
&\leq (2+\frac{16\alpha}{\beta}+4\varepsilon)|B|-2.
\end{split}
\end{equation}
By Corollary~\ref{Freiman_Zp}, together with \eqref{eq37.0} and \eqref{eq37.14}, it follows that there is an arithmetic progression $Q$ containing $B$ such that $$ |Q \setminus B| \leq \gamma|B|.$$

We may assume that $Q$ is an interval. It is now easy to see that the interval $J=[-|B|+1,|B|-1]$ of size
$2|B|-1$ is contained inside $B-B$. Indeed, suppose instead that some $l$, with $0<l<|B|$, does not belong to
$B-B$. If $l>|B|/2$ then at most one of each pair $(t,t+l)$, as $t$ ranges from $\min Q$ to $\max Q - l$, can
belong to $B$, and this contradicts the fact that $|B| \geq |Q|/(1+\gamma)$. On the other hand, if
$l \leq |B|/2$ then we consider the fibres of $Q$ mod $l$: each fibre has size at least 2, and no fibre can have
two consecutive points in $B$. It follows that each fibre meets $B$ in at most $2/3$ of its points, and again this
contradicts the fact that $|B| \geq |Q|/(1+\gamma)$.

Hence we may also assume that $B$ is an interval and that $J=B-B$.

\noindent
By \eqref{eq37.-1} we have $\max_{c\in J} |(A+c)\setminus A| \leq |B|-1+\varepsilon|B|$, so
$\max_{c\in 2^{10}J} |(A+c)\setminus A| \leq 2^{10}(|B|-1+\varepsilon|B|)$.
Set
$K=\{x \in A^c \text{ : } |(x-2^{10}J) \cap A| \leq 2^{-11}\gamma |2^{10}J|    \}$,
and note that
\begin{eqnarray*}
    2^{11}|B|\geq 2^{10}(|B|-1+\varepsilon|B|)
    &\geq& \E_{c\in 2^{10}J}|(A+c)\setminus A|
    =\sum_{x \in A^c} \Prob_{c \in 2^{10}J} \bigg( x \in A+c \bigg)\\
    &=&\sum_{x \in A^c} \frac{|(x-2^{10}J)\cap A|}{|2^{10}J|}
    \geq 2^{-11}\gamma |A^c\setminus K|.\\
\end{eqnarray*}
By \eqref{eq37.-2}, we have
\[
    |K| \geq |A^c|-2^{22}\gamma^{-1}|B|
    \geq \beta p -  2^{22} \gamma^{-1} \alpha p
    >0.
\]
Therefore $K\not= \emptyset$, so we can pick an element $x \in K$. By construction,
$P=x-2^{10}J$
is an interval of size
$|P| = 2^{10}(2|B|-1)-2^{10}+1 \in (2^{10}|B|, 2^{11}|B|)$
such that
$|A\setminus P^c|=|A\cap P| \leq \gamma |B|.$
This concludes the proof of Theorem~\ref{stab_2pts}.
\end{proof}

\vspace{5pt}
Here is the second technical result that we shall need.
\begin{theorem}\label{general_3pts}
Let $A$ and $B$ be subsets of $\mathbb{Z}_p$ and let $I=[p_l, p_r]$ and $J=[q_l, q_r]$ be intervals of $\mathbb{Z}_p$ satisfying
\begin{equation}\label{eq39.-2}
2\leq |B| \leq \min(|A|, p/2^{11}) \text{, } |I|=2^{10}|B| \text{ and } |J| \leq (1+2^{-10})|B|
\end{equation}
and
\begin{equation}\label{eq39.-1}
|A\setminus I^c| \leq 2^{-10} |B| \text{ and } \{q_l,q_r\} \subset B \subset J.
\end{equation}
Then there exists
$B'\in B^{(3)}$ such that $A+B'|\geq |A|+|B|-1$.
\end{theorem}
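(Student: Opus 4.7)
By translating $A$ and $B$ independently (which leaves every $|A+B'|$ unchanged), we normalise so that in $\mathbb{Z}_p$ we have $I = [p-|I|, p-1]$ and $B \subset [0, m]$ with $\{0, m\} \subset B$ and $m \leq (1+2^{-10})|B|$. Decompose $A = A_0 \sqcup A_1$ where $A_0 = A \cap [0, p-|I|-1]$ and $A_1 = A \cap I$, so $|A_1| \leq 2^{-10}|B|$. The decisive observation is that $A_0 + B' \subset [0, p-|I|+m-1]$ for any $B' \subset [0, m]$, and since $m < |I|$ this interval sits inside $[0, p-1]$; hence there is no wrap-around modulo $p$ and $A_0, B$ may be treated as subsets of $\mathbb{Z}$.

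Apply Theorem~\ref{technical_three_translates_Z} to $A_0, B \subset \mathbb{Z}$: there is $b \in B \setminus \{m\}$ such that
\[
|A_0 + \{0, b, m\}| \;\geq\; |A_0| + k + |A_0|\max\!\left(0,\, \frac{|B|-1-k}{|B|-1}\right),\qquad k = |\pi_m(A_0)|.
\]
In the balanced regime ($|A_0| \geq |B|-1$) this already yields $|A_0+\{0,b,m\}| \geq |A_0|+|B|-1$, which falls short of the desired $|A|+|B|-1$ by precisely $|A_1|$.

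We recover the deficit from $A_1$. An element $a \in A_1$ lies outside $A_0 + \{0, b, m\}$ unless $a \in [p-|I|, p-|I|+m-1]$ (the collision strip), because $A_0 \subset [0, p-|I|-1]$, so $a-b$ or $a-m$ can fall into $A_0$ only when $a$ is in this strip. Hence every element of $A_1 \cap [p-|I|+m, p-1]$ contributes one new element to $A+\{0,b,m\}$. If $A_1$ misses the collision strip entirely we are done. Otherwise we absorb the strip: set $L' = [0, p-|I|+m-1]$, $I' = [p-|I|+m, p-1]$, and redefine $\tilde A_0 = A \cap L' \supset A_0$ and $\tilde A_1 = A \cap I'$ with $|\tilde A_1| < |A_1|$. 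Since $|I'| \geq |I|-m \geq 2^9|B|$, the lift argument still applies and we repeat with $(\tilde A_0, \tilde A_1, I')$.

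\textbf{Main obstacle.} The iterative absorption is the delicate step. Each iteration strictly decreases $|A_1|$ and shifts the right boundary of $L$ by $m$; wrap-around freedom requires the cumulative shift to stay below $|I|$, giving a budget of $\lfloor |I|/m \rfloor = \Theta(2^{10})$ iterations. Since $|A_1| \leq 2^{-10}|B|$ is commensurate with this budget, the procedure terminates either with all of $A_1$ absorbed (so the $\mathbb{Z}$-lift bound for $\tilde A_0$ gives $|\tilde A_0|+|B|-1 = |A|+|B|-1$) or with the residual $A_1$ outside the final collision strip (handled as above). Verifying that at every stage the bound from Theorem~\ref{technical_three_translates_Z}, combined with the residual far-$A_1$ contribution, sums to exactly $|A|+|B|-1$ is where the careful bookkeeping lives.
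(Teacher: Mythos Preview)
Your approach has a genuine gap in the ``balanced regime'' assumption. You assume throughout that after absorption one has $|\tilde A_0|\ge |B|-1$, so that Theorem~\ref{technical_three_translates_Z} yields $|\tilde A_0+\{0,b,m\}|\ge |\tilde A_0|+|B|-1$. But the hypotheses allow $|A|=|B|$, and then $|\tilde A_0|=|A|-|\tilde A_1|\le |B|-|\tilde A_1|$; as soon as $|\tilde A_1|\ge 2$ you are in the unbalanced case of Theorem~\ref{technical_three_translates_Z}, which only gives $2|\tilde A_0|+|\pi_m(\tilde A_0)|\cdot\frac{|B|-1-|\tilde A_0|}{|B|-1}$. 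If $|\pi_m(\tilde A_0)|$ is small this is essentially $2|\tilde A_0|$, and adding back only $|\tilde A_1|$ (your ``one new element per far point'') leaves you short. Concretely: take $n=|A|=|B|$ large, $B=[0,n-1]$, $m=n-1$, $A_0=\{0,m,2m,\dots,(n-4)m\}$ an arithmetic progression of step $m$, and $A_1=\{p-3,p-2,p-1\}$. Then $A_1$ misses the initial collision strip (so your iteration halts immediately), $|\pi_m(A_0)|=1$, and for every $b\in B$ one has $|A_0+\{0,b,m\}|\le (n-2)+(n-3)=2n-5$; your bound gives at most $2n-5+3=2n-2<2n-1$. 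The iterative absorption does not help here: it can only enlarge $\tilde A_0$, but $|\tilde A_0|<|B|-1$ persists as long as $|\tilde A_1|\ge 2$, and there is no mechanism forcing $|\tilde A_1|\le 1$.

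The paper closes exactly this gap with two ingredients your plan lacks. First, when $|\pi_m|$ of the bulk is small it abandons the triple $\{q_l,b,q_r\}$ and uses $\{q_l,b_2,b_3\}$ with two random points instead (Lemma~\ref{residue.vs2}), obtaining a $2.5$-times-size bound for the bulk. Second, it partitions $A\cap I$ more finely (into $A_{-2},A_{-1},A_0,A_1,A_2$) and shows in Claim~B that the inner piece $A_{-1,0,1}$ contributes $(2-2^{-3})$ times its size, not just once, because $A_{-1}+q_r$ and $A_1+q_l$ land in disjoint regions from each other and from the bulk. Either of these stronger contributions is what rescues the case $|A_{-2,\infty,2}|<|B|-1$; your single-$|A_1|$ recovery is too weak by a factor of roughly two.
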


\begin{proof}
By \eqref{eq39.-2}, there is a partition $I=I_{-2}\sqcup I_{-1}\sqcup I_0\sqcup I_1\sqcup I_2$ of interval $I$ into five consecutive disjoint intervals such that
$p_l \in I_{-2} \text{ and } p_r \in I_2$, \
$ |I_0|=|I_{-2}|= |I_{2}| = |J|$,
and
$ |I_0\cap A| \leq 2^{-4} |(I_{-1}\sqcup I_0 \sqcup I_1)\cap A|.$
Let $I_{\infty}$ be the complement of $I$: \ $I_{\infty}=I^c.$
For a subset $S\subset \{-2,-1,0,1,2, \infty\}$ write
$A_S=\sqcup_{x\in S}(A\cap I_x).$
By construction and by \eqref{eq39.-1}, for $x, y \in B$, we have the following three assertions:
\begin{equation}\label{eq39.1}
    \text{the sets} \ \  A_{-1}+q_r,\ A_1+q_l \ \ \text{and} \ \ A_{-2,\infty, 2}+y \ \ \text{are  disjoint},
\end{equation}

\begin{equation}\label{eq39.2}
    \text{the sets} \ \ A_{-1,0,1}+x \ \ \text{and} \ \  A_{\infty}+y \ \ \text{are  disjoint},
\end{equation}
and
\begin{equation}\label{eq39.25}
    |A_0| \leq 2^{-4}|A_{-1,0,1}|.
\end{equation}

\vspace{5pt}
We will need two claims.

\vspace{7pt}
{\bf Claim A. }{\em
If $|A_{-2,\infty,2}|\geq |B|-1$ then
$$\E_{b\in B \setminus \{q_r\}}\bigg|A_{-2,\infty,2}+\{q_l,q_r, b\} \bigg| \ge |A_{-2,\infty,2}|+|B|-1.$$
If $|A_{-2,\infty,2}|\leq |B|-1$ then either
$$\E_{b\in B \setminus \{q_r\}}\bigg|A_{-2,\infty,2}+\{q_l,q_r, b\} \bigg|  \ge 2|A_{-2,\infty,2}| + 2^{-3}(|B|-1-|A_{-2,\infty,2}|)$$
or
$$ \E_{b_2,b_3\in B \setminus \{q_r\}}\bigg|A_{-2,\infty,2}+\{q_l,b_2, b_3\} \bigg|  \ge 2.5|A_{-2,\infty,2}|.$$
}
\begin{proof}
Claim A refers to subsets $A_{-2,\infty,2}\subset I_{-2}\sqcup I_{\infty}\sqcup I_2$ and $B\subset J$. Given that the intervals $I_{-2}\sqcup I_{\infty}\sqcup I_2$ and $J$ satisfy $|I_{-2}\sqcup I_{\infty}\sqcup I_2|+|J|<p$, we may assume without loss of generality that the ambient space is $\mathbb{Z}$ rather than $\mathbb{Z}_p$.\\

\vspace{5pt}
First, suppose that $|A_{-2,\infty,2}|\geq |B|-1$. By Theorem~\ref{technical_three_translates_Z},  \eqref{eq39.-2} and \eqref{eq39.-1} we have
$$\E_{b\in B \setminus \{q_r\}}\bigg|A_{-2,\infty,2}+\{q_l,q_r, b\} \bigg|  \ge |A_{-2,\infty,2}|+|B|-1.$$
Second, suppose that $|A_{-2,\infty,2}|\leq |B|-1$ and $8|\pi_{q_r-q_l}(A_{-2,\infty,2})| \geq  |\pi_{q_r-q_l}(B)|$. Again, from Theorem~\ref{technical_three_translates_Z},  \eqref{eq39.-2} and \eqref{eq39.-1} we see that
\begin{eqnarray*}
        \E_{b\in B \setminus \{q_r\}}\bigg|A_{-2,\infty,2}+\{q_l,q_r, b\} \bigg| &\ge& 2|A_{-2,\infty,2}|+|\pi_{q_r-q_l}(A_{-2,\infty,2})|(1-\frac{|A_{-2,\infty,2}|}{|B|-1})\\
        &=&2|A_{-2,\infty,2}|+(|B|-1-|A_{-2,\infty,2}|)\frac{|\pi_{q_r-q_l}(A)|}{|\pi_{q_r-q_l}(B)|}\\
        &\geq& 2|A_{-2,\infty,2}|+2^{-3}(|B|-1-|A_{-2,\infty,2}|).
\end{eqnarray*}
Third, suppose that $|A_{-2,\infty,2}|\leq |B|-1$ and $8|\pi_{q_r-q_l}(A_{-2,\infty,2})| \leq  |\pi_{q_r-q_l}(B)|$. Then Lemma~\ref{residue.vs2},  \eqref{eq39.-2} and \eqref{eq39.-1} imply that
\begin{equation*}
       \E_{b_2,b_3\in B \setminus \{q_r\}}\bigg|A_{-2,\infty,2}+\{q_l,b_2, b_3\} \bigg|  \ge 2.5|A_{-2,\infty,2}|,
\end{equation*}
proving Claim A.
\end{proof}

\vspace{7pt}
{\bf Claim B. }{\em
We have
$$  \E_{b\in B \setminus \{q_r\}}\bigg|(A_{-1,0,1}+\{q_l,q_r, b\}) \setminus ( A_{-2,\infty,2}+\{q_l,q_r, b\} ) \bigg| \geq (2-2^{-3})|A_{-1,0,1}|$$
and
$$  \E_{b_2,b_3\in B \setminus \{q_r\}}\bigg|(A_{-1,0,1}+\{q_l,b_2, b_3\}) \setminus ( A_{-2,\infty,2}+\{q_l,b_2, b_3\} ) \bigg| \geq (2-2^{-3})|A_{-1,0,1}|.$$
}
\begin{proof}
By \eqref{eq39.1} and \eqref{eq39.2}, we have
\[
\E_{b\in B \setminus \{q_r\}}\bigg|(A_{-1,0,1}+\{q_l,q_r, b\}) \setminus ( A_{-2,\infty,2}+\{q_l,q_r, b\} ) \bigg|
\]
 \[
\geq \bigg|A_{-1}+q_r\bigg|+ \bigg|A_{1}+q_l\bigg|+ \E_{b\in B \setminus \{q_r\}}\bigg|(A_{-1,0,1}+b) \setminus  (A_{-2,-1,1,2}+\{q_l,q_r\})\bigg|.
\]
Also, \eqref{eq39.-2} and \eqref{eq39.-1} imply that
\[
        \E_{b\in B \setminus \{q_r\}}\bigg|(A_{-1,0,1}+b) \setminus (A_{-2,-1,1,2}+\{q_l,q_r\})\bigg| \geq \sum_{a\in A_{-1,0,1}} \Prob_{b\in B \setminus \{q_r\}} \bigg(a+b \not\in A_{-2,-1,1,2}+\{q_l,q_r\} \bigg)
\]
\[
\geq |A_{-1,0,1}| \frac{|B|-1-2|A_{-2,-1,1,2}|}{|B|-1}
        \geq (1-2^{-4})|A_{-1,0,1}|.
\]
Combining these two inequalities, we get
\[
\E_{b\in B \setminus \{q_r\}}\bigg|(A_{-1,0,1}+\{q_l,q_r, b\}) \setminus ( A_{-2,\infty,2}+\{q_l,q_r, b\} ) \bigg| \geq |A_{-1}|+|A_1|+(1-2^{-4})|A_{-1,0,1}|.
\]
From \eqref{eq39.25} we conclude that
\[
  \E_{b\in B \setminus \{q_r\}}\bigg|(A_{-1,0,1}+\{q_l,q_r, b\}) \setminus ( A_{-2,\infty,2}+\{q_l,q_r, b\} ) \bigg| \geq (2-2^{-3})|A_{-1,0,1}|.
\]

\vspace{5pt}
By \eqref{eq39.1} and \eqref{eq39.2}, this gives
\[
\E_{b_2,b_3\in B \setminus \{q_r\}}\bigg|(A_{-1,0,1}+\{q_l,b_2, b_3\}) \setminus ( A_{-2,\infty,2}+\{q_l,b_2, b_3\} ) \bigg|\ge
\]
\[
\E_{b_2, b_3\in B \setminus \{q_r\}}\bigg|(A_{-1,0,1}+b_3) \setminus  (A_{-2,-1,0,1,2}+\{q_l, b_2\})\bigg|+ \E_{b_2, b_3\in B \setminus \{q_r\}}\bigg|(A_{-1,0,1}+b_2) \setminus  (A_{-2,-1,0,1,2}+\{q_l, b_3\})\bigg|.
\]
By \eqref{eq39.-2} and \eqref{eq39.-1}, for fixed $b_3\in B$ we have
\[
        \E_{b_2\in B \setminus \{q_r\}}\bigg|(A_{-1,0,1}+b_2) \setminus  (A_{-2,-1,0,1,2}+\{q_l,b_3\})\bigg| \geq \sum_{a\in A_{-1,0,1}} \Prob_{b_2\in B \setminus \{q_r\}} \bigg(a+b_2 \not\in A_{-2,-1,0,1,2}+\{q_l,b_3\} \bigg)
\]
\[
        \geq |A_{-1,0,1}| \frac{|B|-1-2|A_{-2,-1,0,1,2}|}{|B|-1}
        \geq (1-2^{-4})|A_{-1,0,1}|.
\]
Combining the last two inequalities, we conclude that
\[
\E_{b_2,b_3\in B \setminus \{q_r\}}\bigg|(A_{-1,0,1}+\{q_l,b_2, b_3\}) \setminus ( A_{-2,\infty,2}+\{q_l,b_2, b_3\} ) \bigg| \geq (2-2^{-3})|A_{-1,0,1}|.
\]

\vspace{5pt}
This proves Claim B.
\end{proof}

\vspace{7pt}
Returning to the proof of Theorem~\ref{general_3pts},
if $|A_{-2,\infty,2}|\geq |B|-1$, then from Claims A and B we have
$$\E_{b \in B \setminus \{q_r\}} \bigg| A+\{q_l, q_r,b\} \bigg| \geq $$ $$\E_{b\in B \setminus \{q_r\}}\bigg|A_{-2,\infty,2}+\{q_l,q_r, b\} \bigg|+ \E_{b\in B \setminus \{q_r\}}\bigg|(A_{-1,0,1}+\{q_l,q_r, b\}) \setminus ( A_{-2,\infty,2}+\{q_l,q_r, b\} ) \bigg| \geq$$
$$  |A_{-2,\infty,2}|+|B|-1+(2-2^{-3})|A_{-1,0,1}|\geq $$
$$|A|+|B|-1.$$
If $|A_{-2,\infty,2}|< |B| \leq |A|$, then from Claims A and B we conclude that either
$$\E_{b \in B \setminus \{q_r\}} \bigg| A+\{q_l, q_r,b\} \bigg| \geq $$ $$\E_{b\in B \setminus \{q_r\}}\bigg|A_{-2,\infty,2}+\{q_l,q_r, b\} \bigg|+ \E_{b\in B \setminus \{q_r\}}\bigg|(A_{-1,0,1}+\{q_l,q_r, b\}) \setminus ( A_{-2,\infty,2}+\{q_l,q_r, b\} ) \bigg| \geq$$
$$2|A_{-2,\infty,2}| +2^{-3}(|B|-1-|A_{-2,\infty,2}|)+(2-2^{-3})|A_{-1,0,1}| = $$
$$(2-2^{-3})|A|+2^{-3}(|B|-1) \geq$$
$$|A|+|B|-1,$$
or
$$\E_{b_2,b_3 \in B \setminus \{q_r\}} \bigg| A+\{q_l, b_2,b_3\} \bigg| \geq $$
$$\E_{b_2,b_3\in B \setminus \{q_r\}}\bigg|A_{-2,\infty,2}+\{q_l,b_2, b_3\} \bigg|+ \E_{b_2,b_3\in B \setminus \{q_r\}}\bigg|(A_{-1,0,1}+\{q_l,b_2, b_3\}) \setminus ( A_{-2,\infty,2}+\{q_l,b_2, b_3\} ) \bigg| \geq$$
$$2.5|A_{-2,\infty,2}|+(2-2^{-3})|A_{-1,0,1}| \geq $$
$$|A|+|B|-1.$$
The last inequality follows from the fact that $|A_{1,0,1}|  \leq |A\setminus I^c| \leq  2^{-1}|A|$.

\vspace{5pt}
This finishes the proof of Theorem~\ref{general_3pts}.
\end{proof}

\vspace{5pt}
We are now ready to prove Theorems \ref{intro_general_three_translates_Zp}
and  \ref{intro_four_translates_Zp}.

\vspace{7pt}
\noindent{\bf Theorem~\ref{intro_general_three_translates_Zp}.} {\em
For every $\beta>0$ there exists $\alpha>0$ such that the following holds.
Whenever $A$ and $B$ are non-empty subsets of
$\mathbb{Z}_p$ with $|B| \leq \alpha |A|$ and $|A|+|B| \leq (1-\beta)p$,
there exist
elements $b_1,b_2,b_3 \in B$ such that
$$|A+\{b_1, b_2,b_3\}| \ge |A|+|B|-1.$$
}

\begin{proof}
  Fix $2^{-10}>\beta>0$. Choose $\gamma=2^{-10}$. Let $\varepsilon', \alpha'$ be the output of Theorem~\ref{stab_2pts} with input $\beta, \gamma$. Choose $\alpha=\min(2^{-11},\alpha')$. Assume for a
  contradiction that
\begin{equation}\label{eq40.0}
    \max_{B'\in B^{(3)}} |A+B'|<|A|+|B|-1.
\end{equation}
Theorem~\ref{stab_2pts} and the conditions on $|A|$ and $|B|$ imply that there are arithmetic progressions $I$ and $J$ with the same common difference and sizes at least $2^{10}|B|$ and at most $\lfloor(1+\gamma)|B|\rfloor$, respectively, such that $|A\setminus I^c| \leq 2^{-10} |B| \text{ and } |B\setminus J| =0.$
We may assume without loss of generality that $I=[p_l, \hdots, p_r]$ and $J=[q_l, \hdots, q_r]$ are actually intervals
satisfying
$|I|=2^{10}|B|$,  $|J| \leq (1+2^{-10})|B|$,
$|A\setminus I^c| \leq 2^{-10} |B|$ and $\{q_l,q_r\} \subset B \subset J$.
Theorem~\ref{general_3pts} and these conditions on $|A|$ and $|B|$ imply that
$ \max_{B'\in B^{(3)}} |A+B'| \geq |A|+|B|-1.$
\end{proof}

\vspace{7pt}
\noindent {\bf Theorem~\ref{intro_four_translates_Zp}.} {\em
Let $A$ and $B$ be non-empty subsets of $\mathbb{Z}_p$ with
$ |B| \leq |A| \leq 2^{-20}p$ and $B$ an interval.
Then $B$ has three elements, $b_1, b_2$ and $b_3$,
such that $|A+\{b_1, b_2,b_3\}| \ge |A|+|B|-1.$
}

\begin{proof}
Consider the interval $J=B$. By hypothesis, there exists an interval $I$ of size $2^{10}|B|$ such that $|A\setminus I^c| \leq 2^{-10}|B|$ (consider a random interval $I$). By Theorem~\ref{general_3pts} we conclude that
$$ \max_{B'\in B^{(3)}} |A+B'| \geq |A|+|B|-1.$$
\end{proof}

\vspace{5pt}
In the next section we turn our attention to the general case.

\section{Sums in ${\mathbb Z}_p$: the General Case}

We now focus on our main aim, Theorem~\ref{stability_in_Zp_for_intro}.
Actually, in the form (below) that we state Theorem~\ref{stability_in_Zp_for_intro} we will allow $r$ to be negative, and since in that case the
conclusion of Theorem~\ref{stability_in_Zp_for_intro} is trivially false Theorem \ref{translates_of_A_Zp} follows immediately. This avoids us writing down
what is essentially the same long argument twice. Of course, Theorem \ref{translates_of_A_Zp} contains
Theorem \ref{translates_of_A_Zp_equality} and Theorem \ref{intro_few_translates_Z}
as special cases.

\vspace{5pt}
What we actually prove is the following strengthening of Theorem~\ref{stability_in_Zp_for_intro}.

\begin{theorem}\label{strong_stab_B_Z_p}
  For every $\beta>0$ there exists $\varepsilon>0$ such that for every $\alpha>0$ there is a value of $c$ for which the
  following holds. Let $A$ and $B$ be subsets of $\mathbb{Z}_p$. Suppose that
\begin{equation}\label{eq34.-2}
    2\leq \min(|A|,|B|) \text{, } \alpha |B| \leq |A| \leq \alpha^{-1} |B| \text{ and } |A|+|B|\leq (1-\beta)p
\end{equation}
and
\begin{equation}\label{eq34.-1}
    \max_{B'\in B^{(c)}}|A+B'|= |A|+|B|-1+r \leq |A|+|B|-1+\varepsilon \min(|A|,|B|)
  \end{equation}
for some integer $r$.
Then $B$ is contained in an arithmetic progression of size $|B|+r$.
\end{theorem}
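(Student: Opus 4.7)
The strategy is to reduce Theorem~\ref{strong_stab_B_Z_p} to Freiman's $3k-4$ theorem in $\mathbb Z$, using Theorem~$8'$ as the bridge that converts the hypothesis about $c$-subsets into an honest sumset bound.

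First, apply Theorem~$8'$ with $K=10/\beta$ (large enough, using $|A|+|B|\le (1-\beta)p$ and $\alpha|B|\le|A|\le\alpha^{-1}|B|$, that the first term in the minimum dominates) and some $\delta=\delta(\beta,\alpha,\varepsilon)$ much smaller than $\varepsilon$. This yields $A^*\subset A$, $B^*\subset B$ of densities at least $1-\delta$ with
\[
\E_{b_1,\ldots,b_c\in B^*}|A^*+\{b_1,\ldots,b_c\}|\ge (1-\delta)|A^*+B^*|.
\]
Since the left-hand side is trivially at most $\max_{B'\in B^{(c)}}|A+B'|=|A|+|B|-1+r$, rearranging gives
\[
|A^*+B^*|\le |A^*|+|B^*|-1+\varepsilon'\min(|A^*|,|B^*|),
\]
where $\varepsilon'$ can be made as small as I wish by choosing $\delta$ small.

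Next, invoke Corollary~\ref{Freiman_Zp} on the pair $(A^*,B^*)$ to obtain arithmetic progressions $P\supset A^*$, $Q\supset B^*$ of common difference $d$ with $|P\setminus A^*|,|Q\setminus B^*|\le\gamma\min(|A|,|B|)$, where $\gamma$ is small. Multiplying by $d^{-1}\pmod p$, I may assume $d=1$, so $P$ and $Q$ are intervals; because $|A|+|B|\le(1-\beta)p$, after a translation the relevant picture embeds faithfully in $\mathbb Z$ without wrap-around. I then need to propagate the conclusion from $A^*,B^*$ to the full sets $A,B$. The argument is in the spirit of Theorems~\ref{stab_2pts} and~\ref{general_3pts}: if some $b\in B$ lies far from $Q$, I pick $c-1$ elements of $B^*$ spread throughout $Q$ whose sumset with $A$ already exhausts most of $A^*+B^*$; then $A+b$ is almost disjoint from that sumset and contributes essentially $|A|$ additional points, producing a $c$-subset of $B$ whose sumset with $A$ exceeds $|A|+|B|-1+r$ and contradicting the hypothesis. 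Hence $B$ is contained in a slightly enlarged interval $Q'$, and a symmetric argument traps $A$ in $P'$.

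Finally, with $A$ and $B$ both sitting in short intervals in $\mathbb Z$, I apply Theorem~\ref{strengthening_of_three_translates} with $b_1=\min B$, $b_2=\max B$ (and, if needed, the remaining entries of a $c$-subset chosen to probe $A$): the resulting lower bound on $\E_{b}|A+\{b_1,b,b_2\}|$ forces either the length of $B$ to be at most $|B|+r-1$ directly, or the full sumset to satisfy $|A+B|\le|A|+|B|-1+r+o(r)$, at which point Theorem~\ref{3k-4} produces arithmetic progressions $P_A\supset A$, $P_B\supset B$ of common difference one with $|P_B\setminus B|\le r$, which is the desired conclusion.

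\emph{Main obstacle.} The delicate step is the third paragraph: propagating structure from $(A^*,B^*)$ to $(A,B)$ with enough precision, and then detecting individual outlying elements of $B$ using only a $c$-subset test. The choice of the ``background'' $c-1$ elements of $B^*$ must span enough of $Q$ so that $A+\{b_2,\ldots,b_c\}$ already covers essentially all of $A^*+B^*$; and the error terms have to be controlled tightly enough that the final bound is exactly $|B|+r$ and not $|B|+r+O(\varepsilon|B|)$. It is for this last sharpening that the appeal to Theorem~\ref{3k-4} (as opposed to Corollary~\ref{Freiman_Zp} alone) is essential, since Theorem~\ref{3k-4} can absorb the residual slack once the problem has been transplanted to $\mathbb Z$.
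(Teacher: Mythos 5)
Your opening moves (apply Theorem~$8'$, feed the resulting near-equality in $|A^*+B^*|$ into Corollary~\ref{Freiman_Zp}, then detect far-out elements of $B$ by adding one outlier to a $c$-set that already saturates $A^*+B^*$) reproduce, essentially correctly, the paper's ``weak stability'' step (Theorem~\ref{weak_stab_Zp}). The gap is in your endgame, and it is a genuine one: the entire difficulty of the theorem is concentrated exactly where you wave at Theorem~\ref{strengthening_of_three_translates} and Theorem~\ref{3k-4}.

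To conclude that $B$ lies in a progression of size exactly $|B|+r$, you must exhibit a $c$-subset $B'\subset B$ with $|A+B'|\ge |A|+|J|-1$, where $J$ is the \emph{shortest} progression containing $B$; comparing with the hypothesis then gives $|J|\le |B|+r$. Taking $b_1=\min B$, $b_2=\max B$ and a random third point only yields $|A|+|\pi_m(A)|+\cdots\ge |A|+|B|-1$ (Theorem~\ref{technical_three_translates_Z}); the gain you need over this is precisely $|J|-|B|$, i.e.\ the quantity you are trying to bound, and when $|A|<m$ the term $|\pi_m(A)|$ cannot reach $m=|J|-1$. Your fallback --- ``or the full sumset satisfies $|A+B|\le |A|+|B|-1+r+o(r)$'' --- is not available: the hypothesis bounds $|A+B'|$ only for $c$-element $B'$, and since $|A+B|\ge\max_{B'}|A+B'|$ no upper bound on $|A+B|$ follows, so Theorem~\ref{3k-4} cannot be invoked. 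Even if it could, its conclusion $|P_B\setminus B|\le r'$ inherits whatever slack sits in $r'$; it does not ``absorb'' an $o(r)$ or $O(\varepsilon|B|)$ error down to the exact value $r$ (note $r$ may be $0$, so no slack at all is permissible).

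The paper closes this gap with Theorem~\ref{CD_technical}, which is the real technical core: after the weak stability step one partitions $I$ and $J$ into three consecutive blocks $I_i,J_i$ equipped with moduli $d_i$ chosen (via Lemma~\ref{lem25} and a counting argument over shifts $t$) so that the endpoint fibre $B_i^{q_i}$ is \emph{entirely} contained in $B$ and spans $J_i$. One then builds $c$-sets as unions of this full fibre with $s$ random fibres $B_i^{w_j}$; working fibre-by-fibre mod $d_i$ and applying the $3k$--$4$ theorem \emph{inside each fibre} gives a gain of $k_i=(|J_i|-1)/d_i$ on every fibre, whence $\E|A_i+B_i'|\ge |A_i|+|J_i|-1$ with $|J_i|$ (not $|B_i|$) on the right. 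Summing over the three blocks, and handling $A\cap I^c$ separately (Lemma~\ref{lem27}), yields the sharp $|A|+|J|-1$. Nothing in your proposal substitutes for this construction, so as written the proof does not go through.
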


\vspace{5pt}

As remarked earlier, we cannot insist that $A$ is also contained in a short arithmetic
progression. However,
using the same method as above, one can show that there is an arithmetic progression of size
$|A|+r$ such that $A$
is contained in this progression except possibly for at
most $r_0\le \delta(c)r$ terms, where $\delta(c) \to 0$
as $c\to \infty$. This result is optimal in the sense
that $r_0/r \to 0$ need not hold if $c \not\to \infty$.

\vspace{10pt}
As the proof of Theorem~\ref{strong_stab_B_Z_p} is rather involved, we give an informal overview before
proceeding to the proof itself. Some of the statements in this overview are somewhat imprecise,
but they will give the rough picture.

Let us assume that $\varepsilon>0$ is sufficiently small and the sets
that $A$ and $B$ have sizes that are say within a factor of 10 of $n$, where $n$ is much smaller
than $p$.

Let $Q$ be a shortest arithmetic progression containing $B$. Our aim is to show that there is a subset $B'$ of
$B$ of size $c$ such that $|A+B'|$ is bounded from below either by $|A|+|Q|-1$ or
by $|A|+|B|-1+\varepsilon n$.
The two cases depend upon whether or not both $A$ and $B$ are close to arithmetic progressions.

The arguments for $A$ and $B$ turn out to be very similar, so we consider first the case when
say $B$ is
far from every arithmetic progression $R$ (using the symmetric difference as our measure),
meaning say that $|B \Delta R|> 100 \varepsilon n$. Then,
by applying Theorem~\ref{implication_shao1-prov},
we have subsets $A^*$ and $B^*$ of $A$ and $B$, of sizes at
least $(1 - \varepsilon)|A|$ and $(1 - \varepsilon)|B|$ respectively, such that
$|A+B'| > (1-\varepsilon) |A^*+B^*|$
for some subset $B'$ of $B$ of size $c$. Note that the property of being far
from any arithmetic progression is robust under small perturbations of the set.
Thus, for any arithmetic progression $R$, we have
$|B^* \Delta R|>100 \varepsilon n - \varepsilon |B| > 50 \varepsilon n$.
By Freiman's theorem we have that
$|A^*+B^*| \geq |A^*|+|B^*|+50 \varepsilon n > |A|+|B|+30 \varepsilon n$
which implies that   $|A+B'|> |A|+|B|+\varepsilon n$, as claimed.

Thus we have reduced our problem to the case when both $A$ and $B$ are close (in
symmetric difference) to some progressions $P$ and $R$, i.e.
$|A\Delta P|$ and $|B\Delta R|$ are both at most $100 \varepsilon n$.
A careful examination of the above argument actually shows
that $P$ and $R$ have the same common difference, and so from now on we
may assume that $P$ and $R$ are intervals.

We now want to argue that $B$ is not only a subset of the arithmetic progression $Q$,
the arithmetic progression chosen at the beginning  of the argument,
but is also close to it (again in symmetric difference), say $|Q \setminus
B| < 10000 \varepsilon n$. If this is not the case, then $B$ cannot be contained
in the interval obtained from $R$ by blowing it up by $5000 \varepsilon n$,
i.e. there is a point $b\in B$ at distance at least $5000 \varepsilon n$
from $R$.

Let $A_1$ be the part of $A$ inside $P$ and $B_1$ be the part of $B$ inside $R$.
The sets $A_1$ and $B_1$ are about the same size as $|A|$ and $|B|$: $|A_1|$ and $|B_1|$ are at
least $(1 - \varepsilon)|A|$ and $(1 - \varepsilon)|B|$ respectively. 
Applying Theorem~\ref{implication_shao1-prov} again, we obtain subsets $A^*$ and $B^*$ of $A_1$ and
$B_1$, of sizes at least $(1 - \varepsilon)|A_1|$ and $(1 - \varepsilon)|B_1|$ respectively, such that
$| A_1+B_1'| > (1-\varepsilon) |A^*+B^*|$ for some subset $B_1'$ of $B_1$ of size $c$. It
follows that $|A_1+B_1'|> |A|+|B|- 1000 \varepsilon n$ and moreover that $A_1+B_1$ is a
subset of $P+R$. Now using the additional point $b$ which is far from $R$ we
get that $P+b$ sticks out from $P+R$ by many points, at least $5000 \varepsilon n$, and hence
$A_1+b$ also sticks out from $P+R$ by many points, namely at least $4000 \varepsilon n$. 
Therefore
$| A_1+ (B_1' \cup b)| > |A|+|B|+3000 \varepsilon n$, say.

Hence both $|A \Delta P|$ and $|Q \setminus B|$ are small, namely at most
$10000 \varepsilon n$. It is now that the most significant obstacle comes into play:
it could be that $|Q \setminus B|$ is small, like say $\sqrt n$.

For simplicity assume that $A$ is contained in $P$, and recall that $Q$ is the shortest
arithmetic progression containing $B$. In this situation, in the sum $A+B$ there is no wraparound, since
$|P|+|Q|<(1+10^5\varepsilon)(|A|+|B|)<p$. Hence we can view $A$ and $B$ as being
subsets of ${\mathbb Z}$ rather than ${\mathbb Z}_p$.

When $|A| \ge |B|$, we take a random subset $B'$ of $B$ formed from the leftmost and
rightmost points of $B$ and an additional random point of $B$. Recalling Theorem 1,
we find that ${\mathbb E}|A+B'| \geq |A|+|B|-1$.

We adopt an analogous strategy in general: let say
$|B|/100 \leq |A| \leq |B|$. 
Consider the following ideal
case. Suppose that there is an arithmetic progression $M$ with step size $d$, containing the endpoints
of $B$, such that
$d$ is much smaller than $|A|$ and with $B
\cap M = Q \cap M$.  Set $B_x = B \cap (x \mod d)$ and assume $B_0= B
\cap M$.    In this case, an analysis similar to that in the proof of Theorem 1 shows that, for
a random $x \in {\mathbb Z}_d$, if we set $B'=B_0 \cup B_x$ then
${\mathbb E}|A+B'| \ge |A|+|Q|-1$.
For this argument to work, it is important  that $A$ is close to $P$ and $B$ is
close to $Q$. In general there need not be an arithmetic progression $M$ with step size $d$ (containing
the endpoints of $B$) such that
$d$ is much smaller than $|A|$ and $B \cap M = Q \cap M$. However, as it turns out, we can
always
divide $A$ and $B$ into three consecutive blocks such that in each block we can
perform this construction. For this argument to work we do also need that $B$ is
close to $Q$.

\vspace{5pt}
We now embark on the actual proof of Theorem~\ref{strong_stab_B_Z_p}. First we
need a couple of straightforward estimates.

\begin{lemma}\label{change_c}
  For any $\mu>0$ there exists $c_0$ such that for all $c\geq c_0$ the following holds. Let $A,B$ be finite subsets of an abelian group. If $C$ is a subset of $B$ of size $|C|\geq 2|B|/3$ then
  $$\ \ \ \ \ \ \ \E_{B' \in B^{(c)}}|A+(B'\cap C)|\geq (1-\mu)\E_{C'\in C^{(\frac{c}{2})}}|A+C'|.
  \ \ \ \ \ \ \square$$
\end{lemma}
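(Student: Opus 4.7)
The plan is to combine a concentration bound for a hypergeometric random variable with a simple monotonicity coupling. Let $K := |B' \cap C|$, where $B'$ is a uniform random element of $B^{(c)}$. In the non-degenerate regime $|B| \ge c$, $K$ is hypergeometric with mean $\mu_K = c|C|/|B| \ge 2c/3$ and variance at most $c$, so Chebyshev's inequality gives
\[
\Prob(K < c/2) \le \Prob\bigl(\mu_K - K \ge c/6\bigr) \le \frac{36}{c}.
\]
Choosing $c_0 = \lceil 36/\mu \rceil$ then ensures $\Prob(K \ge c/2) \ge 1-\mu$ whenever $c \ge c_0$.

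The second ingredient is the monotonicity of the expected sumset size in the sample size: for every integer $k$ with $c/2 \le k \le |C|$,
\[
\E_{S \in C^{(k)}} |A+S| \;\ge\; \E_{C' \in C^{(c/2)}} |A+C'|.
\]
I would prove this by a two-step coupling: first pick $S$ uniformly in $C^{(k)}$, then pick $C'$ uniformly in $S^{(c/2)}$. A standard calculation shows that the marginal of $C'$ is uniform on $C^{(c/2)}$, and since $A+C' \subseteq A+S$ we have $|A+C'| \le |A+S|$ pointwise, whence the inequality follows on taking expectations.

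Now I condition on the value of $K$. The standard fact that a uniform $c$-subset of the disjoint union $C \sqcup (B\setminus C)$, given the sizes of its two parts, decomposes into independent uniform subsets of $C$ and $B\setminus C$, shows that conditional on $K = k$ the set $B' \cap C$ is uniform on $C^{(k)}$. Therefore
\begin{align*}
\E_{B' \in B^{(c)}}|A+(B' \cap C)|
&\ge \sum_{k \ge c/2} \Prob(K=k)\; \E_{S \in C^{(k)}}|A+S| \\
&\ge \Prob(K \ge c/2) \cdot \E_{C' \in C^{(c/2)}}|A+C'| \\
&\ge (1-\mu)\; \E_{C' \in C^{(c/2)}}|A+C'|,
\end{align*}
which is the desired bound. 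The degenerate cases are handled directly using the paper's convention that $X^{(c)} = \{X\}$ when $|X|<c$: if $|B|<c$ then $B' = B$ forces the left side to equal $|A+C|$, which dominates the right side (either $(1-\mu)|A+C|$ or $(1-\mu)\E_{C'\in C^{(c/2)}}|A+C'| \le (1-\mu)|A+C|$). I do not anticipate any real obstacle; the only points requiring care are picking the constant $c_0$ to absorb the Chebyshev loss and verifying the conditional distribution of $B' \cap C$.
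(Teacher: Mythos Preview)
The paper gives no proof for this lemma; it is simply marked with a $\square$ as a straightforward estimate left to the reader. Your argument is correct and is exactly the natural proof one would write: hypergeometric concentration (via Chebyshev) for $|B'\cap C|$, monotonicity of $k\mapsto \E_{S\in C^{(k)}}|A+S|$ via a subsampling coupling, and the standard conditional-uniformity fact that $B'\cap C$ is uniform on $C^{(k)}$ given $|B'\cap C|=k$. The degenerate case $|B|<c$ is also handled correctly under the paper's convention $X^{(c)}=\{X\}$ when $|X|<c$.
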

\begin{lemma}\label{change_c_again}
For all $m\in \mathbb{N}$ and $\mu>0$ there exists $c_0$ such that for all $c\geq c_0$ the following holds. Let $A$, $B$ and $C$ be finite subsets of an abelian group. If $B_1, B_2, \hdots B_m$ are subsets of $B$ of size $|B_i|\geq \mu |B|$ then
$$\ \ \ \ \ \ \E_{B' \in B^{(c)}}\bigg|(A+B')
\setminus C\bigg| \geq \frac{3}{4}\E_{\substack{ b^1_{j}, b_j^2 \in B_{j} \\ j \in [m] }}\bigg|(A+ \cup_{j \in [m]} \{b_{j}^1, b_{j}^2\})\setminus C\bigg|. \ \ \ \ \ \ \square  $$
\end{lemma}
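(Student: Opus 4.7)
The plan is to choose $c_0=c_0(m,\mu)$ large enough that a uniformly random $c$-subset $B'\in B^{(c)}$ almost always meets each $B_j$ in at least two elements; then, by monotonicity of the set-function $f(S):=|(A+S)\setminus C|$, one can extract from $B'$ a random sub-selection whose law matches (up to a small error) the law of $T=\bigcup_j\{b^1_j,b^2_j\}$ appearing on the right-hand side.

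First, if $|B|<c$ then by the paper's convention $B^{(c)}=\{B\}$, hence $B'=B\supseteq T$ and the inequality is immediate by monotonicity; so I may assume $|B|\ge c$. A standard tail bound for hypergeometric sampling shows that, for $c$ sufficiently large in terms of $m$ and $\mu$, the event
\[
E:=\bigl\{\,|B'\cap B_j|\ge 2\text{ for every }j\in[m]\,\bigr\}
\]
has probability at least $\tfrac{7}{8}$. On $E$, pick independently for each $j$ a uniformly random unordered pair $T'_j\subseteq B'\cap B_j$ and set $T'=\bigcup_j T'_j\subseteq B'$. Monotonicity of $f$ gives
\[
\E_{B'\in B^{(c)}}|(A+B')\setminus C|\;\ge\;\Prob[E]\cdot\E\bigl[|(A+T')\setminus C|\,\big|\,E\bigr].
\]

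The heart of the argument is the distributional claim that, conditional on $E$, the tuple $(T'_j)_{j\in[m]}$ has the product law in which each $T'_j$ is uniform on $B_j^{(2)}$, jointly independent across $j$. When the $B_j$ are pairwise disjoint this is immediate: conditioning on the type vector $(|B'\cap B_j|)_j$, the sets $(B'\cap B_j)_j$ are independent with each $B'\cap B_j$ uniform over subsets of $B_j$ of the prescribed size, and selecting a uniform $2$-subset of each preserves both uniformity and independence. The overlapping case is handled by a direct, if more bookkeeping-heavy, computation of the conditional law.

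Finally, to compare with the right-hand side of the lemma: the random set $T=\bigcup_j\{b^1_j,b^2_j\}$ with $b^1_j,b^2_j$ i.i.d.\ uniform on $B_j$ has, on the event $D=\{b^1_j\ne b^2_j\text{ for all }j\}$, exactly the product law described above, while on $D^c$ at least one pair collapses to a singleton and so $|(A+T)\setminus C|$ is only smaller by monotonicity. Since $\Prob[D]\ge 1-m/(\mu|B|)\ge 1-m/(\mu c_0)$,
\[
\E\bigl[|(A+T')\setminus C|\,\big|\,E\bigr]\;\ge\;\bigl(1-\tfrac{m}{\mu c_0}\bigr)\,\E\,|(A+T)\setminus C|.
\]
Choosing $c_0$ so large that $\tfrac{7}{8}\bigl(1-\tfrac{m}{\mu c_0}\bigr)\ge\tfrac{3}{4}$ closes the proof.

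The main obstacle I expect is the distributional claim for $(T'_j)_j$ in the overlapping case: the clean product-symmetry argument used in the disjoint case does not act naturally on $B$, so the factorization of the conditional law must be verified more carefully, even though the conclusion is intuitively clear and the disjoint case model problem captures the main idea.
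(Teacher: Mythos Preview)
The paper does not supply a proof of this lemma (it is stated with a trailing $\square$, signalling that the authors regard it as a routine estimate), so there is no detailed argument to compare against. That said, your proposal has a genuine gap.

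The distributional claim you flag as the main obstacle is in fact \emph{false}, not merely bookkeeping-heavy. Take $m=2$ and $B_1=B_2=B$. Then $B'\cap B_1=B'\cap B_2=B'$, and your $T'_1,T'_2$ are two independent uniform $2$-subsets of the \emph{same} random $c$-set $B'$. Each is marginally uniform on $B^{(2)}$ by symmetry, but they are not jointly independent: for instance $\Prob[T'_1=T'_2]=1/\binom{c}{2}$, whereas for genuinely independent uniform pairs from $B^{(2)}$ this probability is $1/\binom{|B|}{2}$. Worse, the dependence points the wrong way for your argument: since $T'_1,T'_2\subset B'$, their union is stochastically \emph{smaller} than that of independent pairs (e.g.\ with $A=\{0\}$ and $C=\emptyset$, so $f(S)=|S|$, one computes $\E|T'_1\cup T'_2|=4-4/c<4-4/|B|=\E|T_1\cup T_2|$ whenever $|B|>c$). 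No amount of bookkeeping will verify the product law in the overlapping case, and the chain $\E f(B')\ge \Prob[E]\,\E[f(T')\mid E]\ge\cdots\ge\tfrac34\,\E f(T)$ cannot be closed by this route.

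A clean repair is to drop the coupling and compare pointwise. For $x\notin C$ set $r(x)=|(x-A)\cap B|$; from $|(x-A)\cap B_j|\le r(x)$ and $|B_j|\ge\mu|B|$ one has
\[
\Prob_{B'\in B^{(c)}}\big[x\in A+B'\big]\ \ge\ 1-\Big(1-\frac{r(x)}{|B|}\Big)^{c},
\qquad
\Prob\big[x\in A+T\big]\ \le\ \min\Big(1,\ \frac{2m}{\mu}\cdot\frac{r(x)}{|B|}\Big).
\]
Once $(1-\mu/(2m))^{c}\le 1/4$, the left quantity is at least $3/4$ of the right for every value of $r(x)/|B|$: split at $\mu/(2m)$ and use concavity of $p\mapsto 1-(1-p)^c$ on the small range, the bound $3/4$ on the large range. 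Summing over $x$ gives the lemma and makes the dependence $c_0=O(m/\mu)$ explicit.
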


\vspace{5pt}

We also need the following weak stability result.

\begin{theorem}\label{weak_stab_Zp}
  For all $\beta, \gamma >0$ there exists $\varepsilon>0$ such that for all $\alpha>0$ there is a
  value of $c$ for which the following holds. Let $A$ and $B$ be subsets of $\mathbb{Z}_p$
  with
\begin{equation}\label{eq22.-2}
    2\leq \min(|A|,|B|) \text{ , } \alpha |B| \leq |A| \leq \frac{1}{\alpha}|B| \text{ and } |A|+|B| \leq (1-\beta)p.
\end{equation}
Then there exists a element $b \in B$ such that if
\begin{equation}\label{eq22.-1}
    \E_{B'\in B^{(c)}}\bigg|A+(B'\cup \{b\})\bigg| \leq |A|+|B|-1 + \varepsilon \min(|A|,|B|)
\end{equation}
then there exist arithmetic progressions $P$ and $Q$ with the same common difference such that  $$|A\Delta P| \leq \gamma \min(|A|,|B|) \text{ , } |B\Delta Q| \leq \gamma \min(|A|,|B|) \text{ and } B \subset Q.$$
\end{theorem}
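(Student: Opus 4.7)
The strategy is: extract an approximate skeleton $A^* \subset A$, $B^* \subset B$ via Theorem~\ref{implication_shao1-prov}; place them inside arithmetic progressions $P'$, $Q'$ of common difference $d$ via Corollary~\ref{Freiman_Zp}; and use the auxiliary point $b$ to force $B$ into a short AP of common difference $d$.

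Fix $\varepsilon > 0$ sufficiently small in terms of $\beta$ and $\gamma$, so that (among other things) Corollary~\ref{Freiman_Zp} with inputs $\beta/2$ and $\gamma/100$ applies whenever $|A^*+B^*| \leq |A^*|+|B^*|-1+C\varepsilon \min(|A^*|,|B^*|)$ for a suitable absolute constant $C$. Given $\alpha$, set $K := 100(1+\alpha^{-1})$ and pick $\varepsilon_1 > 0$ small enough that $c\varepsilon_1/\alpha \ll \varepsilon$, where $c$ is the output of Theorem~\ref{implication_shao1-prov} applied with parameters $K$ and $\varepsilon_1$ (feasible since $c$ depends monotonically on $\varepsilon_1$). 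That theorem yields subsets $A^*, B^*$ with $|A\setminus A^*| \leq \varepsilon_1|A|$, $|B\setminus B^*| \leq \varepsilon_1|B|$, and $\mathbb{E}_{B'\in B^{(c)}}|A+B'| \geq \min((1-\varepsilon_1)|A^*+B^*|, K|A|, K|B|)$. If the minimum equals $K|A|$ or $K|B|$, the right side exceeds $|A|+|B|-1+\varepsilon\min(|A|,|B|)$ by the choice of $K$, and since $\mathbb{E}|A+(B'\cup\{b\})| \geq \mathbb{E}|A+B'|$ for every $b$, the hypothesis of the theorem fails for every $b$ and any $b\in B$ makes the conditional vacuously true. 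Otherwise the minimum is $(1-\varepsilon_1)|A^*+B^*|$, and assuming the hypothesis holds for our forthcoming choice of $b$, we deduce $|A^*+B^*| \leq |A^*|+|B^*|-1+C\varepsilon\min(|A^*|,|B^*|)$. Corollary~\ref{Freiman_Zp} then produces APs $P', Q'$ of common difference $d$ containing $A^*, B^*$, with $|P'\setminus A^*|, |Q'\setminus B^*| \leq (\gamma/100)\min(|A|,|B|)$. Rescaling by $d^{-1}$ in $\mathbb{Z}_p$ (valid as $p$ is prime and $d\neq 0$), assume $P'$ and $Q' = [q_l, q_r]$ are cyclic intervals.

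Set $P := P'$, so that $|A\Delta P| \leq \varepsilon_1|A| + (\gamma/100)\min \leq \gamma\min(|A|,|B|)$. If $B \subset Q'$, set $Q := Q'$ and pick $b\in B$ arbitrarily; then $|B\Delta Q| \leq |Q'\setminus B^*| \leq (\gamma/100)\min$ and the conclusion holds. Otherwise let $b \in B\setminus Q'$ be an outlier maximising the extension cost $\rho(b) := \min(b-q_r,\, q_l-b) \bmod p$, and let $Q$ be the smallest cyclic interval containing $B$. The heart of the argument is to show that if the hypothesis holds for this $b$, then $\rho(b)$ must be small. We bound $|(A+b)\setminus(A+B')| \geq |A^*|-|(A^*+b)\cap(A+B')|$, and decompose $A+B'$ as the union of $A^*+(B'\cap Q')$, $A^*+(B'\setminus Q')$, and $(A\setminus A^*)+B'$. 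The first piece lies in $P'+Q'$, so its intersection with $A^*+b\subseteq P'+b$ has size at most $\max(|P'|-\rho(b),0)$, since $P'+b$ is displaced from $P'+Q'$ by exactly $\rho(b)$; each of the other two contributes at most $c\varepsilon_1|A|$, by a union bound over $B'\setminus Q'$ and over the translates of $A\setminus A^*$, which is $O(\varepsilon)\min(|A|,|B|)$ by the choice of $\varepsilon_1$. Using $|A^*| \geq |P'|-(\gamma/100)\min$, we conclude $\mathbb{E}|(A+b)\setminus(A+B')| \geq \rho(b) - O(\varepsilon+\gamma)\min(|A|,|B|)$. Since $\mathbb{E}|A+B'| \geq |A|+|B|-1-O(\varepsilon_1/\alpha)\min$ by Cauchy--Davenport on $A^*+B^*$, the hypothesis for $b$ forces $\rho(b) \leq O(\varepsilon+\gamma)\min(|A|,|B|)$. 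Since $b$ has maximum extension cost, every outlier of $B$ relative to $Q'$ has cost at most $\rho(b)$, so $Q$ extends $Q'$ by at most $2\rho(b)$ in total, and $|Q\setminus B| \leq (\gamma/100)\min + 2\rho(b) \leq \gamma\min(|A|,|B|)$, with $B\subset Q$ by construction.

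The main obstacle is the cross-term bookkeeping in the decomposition above: the contributions from $A\setminus A^*$ and from $B'\setminus Q'$ must be kept below the threshold $\varepsilon\min(|A|,|B|)$, forcing $\varepsilon_1$ to be chosen much smaller than $\varepsilon\alpha/c$. This is achievable via the monotone dependence of $c(\varepsilon_1)$ on $\varepsilon_1$ in Theorem~\ref{implication_shao1-prov}, which is why the final $\varepsilon$ can be taken to depend only on $\beta$ and $\gamma$ while $c$ is allowed to depend on $\alpha$.
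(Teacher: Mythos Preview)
Your overall plan mirrors the paper's---apply Theorem~\ref{implication_shao1-prov} to get large $A^*\subset A$, $B^*\subset B$ with small sumset, feed this into Corollary~\ref{Freiman_Zp} to obtain intervals $P',Q'$, and then use an outlier $b\in B\setminus Q'$ to derive a contradiction. The difference, and the place where your argument breaks, is in how you separate the ``inside'' and ``outside'' contributions.

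The fatal step is your cross-term bookkeeping. You bound $|(A^*+b)\cap((A\setminus A^*)+B')|$ and $|(A^*+b)\cap(A^*+(B'\setminus Q'))|$ by quantities of order $c\varepsilon_1|A|$, and then claim you can choose $\varepsilon_1$ so that $c\varepsilon_1/\alpha\ll\varepsilon$, appealing to the ``monotone dependence of $c(\varepsilon_1)$ on $\varepsilon_1$''. But monotonicity is not the issue; the \emph{rate} is. In Theorem~\ref{implication_shao1-prov} the constant $c$ comes, via Shao's Theorem~\ref{thm_shao}, from the arithmetic regularity lemma, so $c(\varepsilon_1)$ grows at least tower-type in $1/\varepsilon_1$. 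Consequently $c(\varepsilon_1)\varepsilon_1\to\infty$ as $\varepsilon_1\to 0$, and the inequality $c\varepsilon_1\ll\varepsilon\alpha$ is simply unattainable. Your decomposition of $A+B'$ therefore does not give a usable lower bound on $|(A+b)\setminus(A+B')|$.

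The paper sidesteps this entirely. Rather than trying to control $A+B'$, it works with $(A\cap U)+(B'\cap V)$, which is automatically contained in the interval $U+V$. A \emph{second} application of Theorem~\ref{implication_shao1-prov} (Claim~B), together with Lemma~\ref{change_c} and Cauchy--Davenport, gives $\E_{B'}|(A\cap U)+(B'\cap V)|\geq |A|+|B|-1-O(\gamma)\min(|A|,|B|)$, with losses of order $\mu(|A|+|B|)$ and no factor of $c$. On the other side, $(A+b)\setminus(U+V)$ is estimated purely geometrically: if $b$ lies outside the $\lfloor 2^{-2}\gamma\min\rfloor$-thickening $Q$ of $V$, then $|(U+b)\setminus(U+V)|\geq\lfloor 2^{-2}\gamma\min\rfloor$, and passing from $U$ to $A$ costs only $|A\Delta U|$. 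Since $(A\cap U)+(B'\cap V)\subset U+V$ and $(A+b)\setminus(U+V)$ are disjoint by construction, the two pieces add cleanly, yielding the contradiction in Claim~C without ever needing $c\mu$ to be small. To repair your argument you would need to replace the decomposition of $A+B'$ by this containment-based separation.
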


\begin{proof}[Proof of Theorem \ref{weak_stab_Zp}]
Fix $2^{-10}>\beta>\gamma>\alpha>0$. Let $\varepsilon_1$ be the output of Theorem~\ref{Freiman_Zp} with input $\beta,2^{-11}\gamma$. Let $\varepsilon=\min(2^{-2}\varepsilon_1,2^{-6}\gamma)$. Let $\mu=2^{-12}\min(\alpha \varepsilon, \alpha \gamma)$. Let $c_0$ be the output of Lemma~\ref{change_c} with input $\mu$. Let $c_1$ be the output of Theorem~\ref{implication_shao1-prov} with input $(4\alpha^{-1},\mu)$. Let $c=2\max(c_0,c_1)$. This time we partition our proof into three Claims.

\vspace{5pt}
{\bf Claim A. }{\em
There exist arithmetic progressions $U,V$ with the same common difference and
\begin{equation}\label{eq22.05}
    |A\Delta U| \leq \frac{\gamma}{2^{10}} \min(|A|,|B|) \text{ and } |B\Delta V| \leq \frac{\gamma}{2^{10}} \min(|A|,|B|).
\end{equation}
}

\noindent
{\em Proof of Claim A.}
By Theorem~\ref{implication_shao1-prov}, there are subsets $A^*\subset A$ and $B^*\subset B$ with
\begin{equation}\label{eq22.1}
    |A^*|\geq \lceil (1-\mu)|A|\rceil  \text{ and } |B^*|\geq \lceil (1-\mu)|B|\rceil
\end{equation}
such that
\begin{equation}\label{eq22.2}
    \E_{B'\in B^{(c)}}| A+B'| \geq \min \bigg((1-\mu)|A^*+B^*|\text{, } \frac{4}{\alpha}|A|\text{, } \frac{4}{\alpha}|B|\bigg).
\end{equation}
By \eqref{eq22.-2} we have
\begin{equation}\label{eq22.25}
\min\bigg(\frac{4}{\alpha}|A|,\frac{4}{\alpha}|B|\bigg) \geq 4\max(|A|,|B|) >
    |A|+|B|-1+\varepsilon\min(|A|,|B|),
\end{equation}
and by \eqref{eq22.-2} and \eqref{eq22.1} we find that
\begin{equation}\label{eq22.27}
    2\leq \min\big(|A^*|,|B^*|\big), \ \ \ \frac{\alpha}{2} |B^*| \leq |A^*| \leq \frac{2}{\alpha}|B^*| \ \ \ {\rm and} \ \ \  |A^*|+|B^*| \leq (1-\beta)p.
\end{equation}
Hence \eqref{eq22.-1}, \eqref{eq22.2} and \eqref{eq22.25} imply
\begin{equation}\label{eq22.28}
    |A|+|B|-1+\varepsilon\min(|A|,|B|) \geq (1-\mu)|A^*+B^*|,
\end{equation}
and \eqref{eq22.1}, \eqref{eq22.27} and \eqref{eq22.28} imply
\begin{equation}\label{eq22.3}
    \begin{split}
        |A^*+B^*| &\leq (1-\mu)^{-1}(|A|+|B|-1+\varepsilon\min(|A|,|B|)) \\
        &\leq (1-\mu)^{-2}|A^*|+(1-\mu)^{-2}|B^*|-1+2\varepsilon \min(|A^*|, |B^*|)\\
        &\leq |A^*|+|B^*|-1+8\mu\max(|A^*|,|B^*|)+ 2\varepsilon\min(|A^*|,|B^*|)\\
        &\leq |A^*|+|B^*|-1+4\varepsilon\min(|A^*|,|B^*|).
        \end{split}
\end{equation}
Theorem~\ref{Freiman_Zp}, together with \eqref{eq22.27} and \eqref{eq22.3}, now guarantees that there exist arithmetic progressions $U$ and $V$ with the same common difference such that
\begin{equation}\label{eq22.4}
    |A^*\Delta U| \leq \frac{\gamma}{2^{11}}\min(|A^*|,|B^*|) \text{ and } |B^*\Delta V| \leq \frac{\gamma}{2^{11}}\min(|A^*|,|B^*|)
\end{equation}
Claim A follows from  \eqref{eq22.1} and \eqref{eq22.4}:
\begin{eqnarray*}
    &&|A\Delta U|\leq |A^*\Delta U|+|A\Delta A^*| \leq \frac{\gamma}{2^{11}}\min(|A^*|,|B^*|)+\mu|A|\\
    &&\leq \frac{\gamma}{2^{11}}\min(|A|,|B|)+\mu|A|
    \leq \frac{\gamma}{2^{10}}\min(|A|,|B|).
\end{eqnarray*}
and
\begin{eqnarray*}
    &&|B\Delta V|\leq |B^*\Delta V|+|B\Delta B^*|
    \leq \frac{\gamma}{2^{11}}\min(|A^*|,|B^*|)+\mu|B|\\
    &&\leq \frac{\gamma}{2^{11}}\min(|A|,|B|)+\mu|B|
    \leq \frac{\gamma}{2^{10}}\min(|A|,|B|).  \hspace{195pt} \hfill{\square}
\end{eqnarray*}

\vspace{5pt}
\begin{equation}\label{eq22.45}
{\bf Claim B.} \ \ \ \ \ \ \ \ \ \     \E_{B' \in B^{(c)}}|( A\cap U)+(B'\cap V)|\geq |A|+|B|-1-\frac{\gamma}{2^8}\min(|A|,|B|). \ \ \ \ \ \ \ \ \ \ \ \ \ \ \ \ \ \ \ \ \ \ \
\end{equation}

\noindent
{\em Proof of Claim B.}
By Claim A and Lemma~\ref{change_c} we have
\begin{equation}\label{eq22.5}
    \E_{B'\in B^{(c)}} |(A\cap U)+(B'\cap V)| \geq (1-\mu)\E_{C'\in (B\cap V)^{(c/2)}}| (A\cap U)+C'|
\end{equation}
By Theorem~\ref{implication_shao1-prov}, we may find subsets $A^*\subset A\cap U$ and $B^*\subset B\cap V$ with
\begin{equation}\label{eq22.6}
    |A^*|\geq \lceil (1-\mu)|A\cap U|\rceil  \text{ and } |B^*|\geq \lceil (1-\mu)|B\cap V|\rceil
\end{equation}
such that
\begin{equation}\label{eq22.7}
    {\mathbb E}_{C'\in (B\cap V)^{(c/2)}}| (A\cap U)+C'| \geq \min \bigg((1-\mu)|A^*+B^*|\text{, } \frac{4}{\alpha}|A\cap U|\text{, } \frac{4}{\alpha}|B\cap V|\bigg).
\end{equation}
By \eqref{eq22.-2} and \eqref{eq22.05} we have
\begin{equation}\label{eq22.8}
\min\bigg(\frac{4}{\alpha}|A\cap U|,\frac{4}{\alpha}|B\cap V|\bigg) \geq 2\max(|A\cap U|,|B\cap V|) \geq |A^*|+|B^*|.
\end{equation}
By the Cauchy--Davenport theorem, together with \eqref{eq22.-2}, we have
\begin{equation}\label{eq22.10}
    |A^*+B^*|\geq \min(p,|A^*|+|B^*|-1)=|A^*|+|B^*|-1.
\end{equation}
By \eqref{eq22.5}, \eqref{eq22.7}, \eqref{eq22.8} and \eqref{eq22.10} this yields
\begin{equation}\label{eq22.9}
    \E_{B'\in B^{(c)}} |(A\cap U)+(B'\cap V)| \geq (1-\mu)^2(|A^*|+|B^*|-1).
\end{equation}
Finally, from \eqref{eq22.05}, \eqref{eq22.6} and \eqref{eq22.9} we deduce that
\begin{equation*}
\begin{split}
    \E_{B'\in B^{(c)}} |(A\cap U)+(B'\cap V)|  &\geq (1-\mu)^2(|A^*|+|B^*|-1) \\
    &\geq (1-\mu)^3(|A\cap U|+|B\cap V|)-1\\
    &\geq (1-\mu)^3(|A|+|B|-2^{-9}\gamma\min(|A|,|B|))-1\\
    &\geq |A|+|B|-1-2^{-9}\gamma \min(|A|,|B|)-8\mu \max(|A|,|B|)\\
    &\geq |A|+|B|-1-2^{-8}\gamma \min(|A|,|B|). \hspace{155pt}\square
\end{split}
\end{equation*}

\vspace{5pt}
Let $Q$ be the arithmetic progression which extends $V$ on each side by exactly $\lfloor 2^{-2}\gamma \min(|A|,|B|) \rfloor$.\\

\vspace{5pt}
\[
{\bf Claim\ C.} \ \ \ \ \ \ \ \ \ \ \ \ \ \ \ \   |B \Delta Q| \leq \gamma \min(|A|,|B|) \text{ and } |B\setminus Q|=0. \ \ \ \ \ \ \ \ \ \ \ \ \ \ \ \ \ \ \ \ \ \ \ \ \ \ \ \ \ \ \ \
\]

\noindent
{\em Proof of Claim C.}
For the first part, by \eqref{eq22.05} we have
\begin{equation}\label{eq22.105}
\begin{split}
        |B\Delta Q|&\leq |B\Delta V|+|V\Delta Q|
        \leq 2^{-10}\gamma \min(|A|,|B|)+ 2\lfloor 2^{-2}\gamma \min(|A|,|B|)\rfloor\\
        &\leq \gamma \min(|A|,|B|).
\end{split}
\end{equation}
For the second part, we assume for a contradiction that we have some
$b\in B\setminus Q$.
This implies $\lfloor2^{-2}\gamma \min(|A|,|B|)\rfloor\geq 1$,
which gives
\begin{equation}\label{eq22.11}
    \lfloor 2^{-2}\gamma \min(|A|,|B|) \rfloor \geq  2^{-3}\gamma \min(|A|,|B|).
\end{equation}
By \eqref{eq22.-2}, \eqref{eq22.05} and \eqref{eq22.105} we have
$|U|+|Q| \leq (1+\gamma)(|A|+|B|)\le (1+\gamma)(1-\beta)p <p$ and
$|U| \geq 2^{-1}|A| \geq \lfloor 2^{-2} \gamma \min(|A|,|B|) \rfloor$, which yields
\begin{equation}\label{eq22.12}
    |(U+b)\setminus (U+V)|\geq \lfloor 2^{-2}\gamma \min(|A|,|B|) \rfloor.
\end{equation}
By \eqref{eq22.05}, \eqref{eq22.11}, \eqref{eq22.12} we have
\begin{eqnarray}\label{eq22.13}
        |(A+b) \setminus (U+V)|&\geq& -|A\Delta U|+|(U+b)\setminus (U+V)|\\
        &\geq& -2^{-10}\gamma \min(|A|,|B|)+ \lfloor 2^{-2}\gamma \min(|A|,|B|) \rfloor
        \geq 2^{-4}\gamma \min(|A|,|B|).
\end{eqnarray}
Finally, by \eqref{eq22.-1}, \eqref{eq22.45} and \eqref{eq22.13} we obtain
\begin{eqnarray*}
        |A|+|B|-1 + \varepsilon \min(|A|,|B|) &\geq& \E_{B'\in B^{(c)}}\bigg|A+(B'\cup \{b\})\bigg| \\
        &\geq& \bigg|(A+b)\setminus (U+V)\bigg|+\E_{B'\in B^{(c)}}\bigg|(A\cap U)+(B'\cap V))\bigg|\\
        &\geq& 2^{-4}\gamma \min(|A|,|B|)+ |A|+|B|-1-2^{-8}\gamma\min(|A|,|B|)\\
        &\geq& |A|+|B|-1+2^{-5}\gamma \min(|A|,|B|). \hspace{100pt}{\square}
\end{eqnarray*}

\vspace{7pt}
\noindent
The conclusion of Theorem~\ref{weak_stab_Zp} now follows from Claims A and C.
\end{proof}

\vspace{5pt}
We need one final result.
\begin{theorem}\label{CD_technical}
  For all $\beta>0$ there exists $\gamma>0$ such that for every $\alpha>0$ there is a value of $c$ for which the
  following
  holds. Let $A$ and $B$ be subsets of $\mathbb{Z}_p$ and let  $I=[p_l, p_r]$ and $J=[q_l, q_r]$ be intervals  of $\mathbb{Z}_p$ satisfying
\begin{equation}\label{eq25.-2}
    \alpha |J| \leq |I| \leq \alpha^{-1} |J| \text{ and } |I|+|J|\leq (1-\beta)p
\end{equation}
and
\begin{equation}\label{eq25.-1}
    \max(|A\Delta I|, |B\Delta J|) \leq \gamma \min (|I|, |J|) \text{ and } \{q_l, q_r\} \subset B \subset J.
\end{equation}
Then there is a family $\mathcal{F}\subset B^{(c)}$, depending only on $I$, $J$ and $B$ (but not on $A$), such that
$$\E_{B'\in \mathcal{F}}|A+B'|\geq |A|+|J|-1 \geq |A|+|B|-1.$$
\end{theorem}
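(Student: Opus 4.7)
The plan is to use the no-wraparound hypothesis $|I|+|J|\leq(1-\beta)p$ to work in $\mathbb{Z}$ throughout, viewing $I$ and $J$ as integer intervals and setting $m=q_r-q_l=|J|-1$, $\widetilde{A}=\pi_m(A)$, $\widetilde{B}=\pi_m(B)$. Since $q_l,q_r\in B$ and $J=[q_l,q_r]$, the projection $\pi_m$ restricted to $B\setminus\{q_r\}$ is a bijection onto $\widetilde{B}$. The family $\mathcal{F}$ will be produced by a randomized procedure depending only on $B,I,J$: every $B'\in\mathcal{F}$ contains the two endpoints $q_l,q_r$, and the remaining $c-2$ slots are filled from $B\setminus\{q_l,q_r\}$ by a rule spelled out below.

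The basic estimate is Theorem~\ref{technical_three_translates_Z}, applied with $q_l,q_r$ in place of $0,m$: for uniform $b\in B\setminus\{q_r\}$,
\[
\E_b\,|A+\{q_l,q_r,b\}|\geq |A|+|\widetilde{A}|+|A|\max\!\left(0,\frac{|B|-1-|\widetilde{A}|}{|B|-1}\right),
\]
while Lemma~\ref{lem8.2} controls the points contributed by further random elements of $B\setminus\{q_r\}$ lying outside $\pi_m^{-1}(\widetilde{A})$. In the easier case $|I|\geq|J|$, any window of $m+1=|J|$ consecutive integers inside $I$ covers every residue mod $m$, so $|I\setminus A|\leq\gamma\min(|I|,|J|)$ gives $|\widetilde{A}|\geq m-O(\gamma\min(|I|,|J|))$, and the parallel argument yields $|\widetilde{B}|\geq m-O(\gamma\min(|I|,|J|))$. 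The three-element bound above already reaches $|A|+|J|-1$ up to a $O(\gamma\min)$ deficit; we close this gap by appending $c-2$ further uniformly-random elements from $B\setminus\{q_r\}$, using that with high probability their residues mod $m$ collectively hit each of the $O(\gamma\min)$ residues of $\mathbb{Z}_m$ missing from $\widetilde{A}$, each missing residue then contributing $\Omega(|A|)$ new points by an iteration of Lemma~\ref{lem8.2}.

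The harder case is $|I|<|J|$, where $A$ simply cannot cover all residues mod $m$ by cardinality. Here we follow the block strategy described in the overview before Theorem~\ref{strong_stab_B_Z_p}: partition $J$ into three consecutive sub-intervals $J_1,J_2,J_3$ of lengths comparable to $|I|$, and correspondingly split $A$ into consecutive pieces $A^{(i)}$ and $B$ into $B_i=B\cap J_i$. Within each block the previous-case argument applies, using the endpoints of $B_i$ together with random extras drawn from $B_i$, yielding a block contribution of at least $|A^{(i)}|+|J_i|-1$ in expectation. Since the three blocks are consecutive in $\mathbb{Z}$, the integer sums $A^{(i)}+B'_i$ sit in disjoint intervals, so the block contributions add to $|A|+|J|-O(1)$, with the $O(1)$ loss absorbed by the room allowed by $\gamma$.

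The main obstacle is arranging this three-block partition so that (i) each block individually admits the pigeon-hole residue argument of the easier case, (ii) each $B_i$ still contains both endpoints of $J_i$ so that the Lemma~\ref{lem8.1}-style step goes through in each block, and (iii) the whole construction depends only on $J$ and $B$ rather than on $A$. Verifying that these three constraints can be met simultaneously, while keeping the cumulative $O(1)$ slack across the three blocks manageable and $c$ bounded in terms of $\alpha$ and $\beta$ (with $\gamma$ chosen sufficiently small), is the technical heart of the argument; once the partition is in place, $\E_{B'\in\mathcal{F}}|A+B'|\geq|A|+|J|-1$ follows by summing the per-block contributions.
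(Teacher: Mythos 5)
There is a genuine gap, and it sits exactly where the real difficulty of this theorem lies: making the \emph{expectation} reach the exact bound $|A|+|J|-1$. In your ``easier'' case, once $|\widetilde A|\geq m-O(\gamma m)$ the factor $\max\bigl(0,(|\widetilde B|-|\widetilde A|)/|\widetilde B|\bigr)$ appearing in Theorem~\ref{technical_three_translates_Z} and Lemma~\ref{lem8.2} is zero, so those results contribute nothing further, and your plan is to cover each of the $m-|\widetilde A|$ residues missing from $\widetilde A$ by $c-2$ extra random translates. But a single residue class mod $m$ meets $I+J$ in only $O(1+|I|/|J|)$ points, so the claim that each missing residue contributes $\Omega(|A|)$ new points is false; when $|I|$ and $|J|$ are comparable, a hit fibre may contribute exactly $1$ point, which equals the required average of $1$ per missing fibre, while the probability that some $b_j$ hits a given missing fibre is $1-O(\gamma)^{c-2}<1$. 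The expectation therefore falls strictly short of $|A|+|J|-1$, and there is no slack anywhere: the ``$O(1)$ loss absorbed by the room allowed by $\gamma$'' in your block case does not exist, since the inequality to be proved is exact. The paper's resolution is precisely to replace the two endpoints by a \emph{full} arithmetic progression $B_i^{q_i}=J_i\cap\{q_i\bmod d_i\}$ of $k_i+1$ terms all lying in $B$ (condition (6) of Lemma~\ref{lem25}): each residue hit by this fibre then contributes $k_i$ above its base count, a successful random fibre contributes $k_i+1\geq k_i$, and the surplus of $1$ beats the failure probability $(2/3)^s$ once $s\gg\log\alpha^{-1}$. Constructing such full fibres with $d_i\mid(q_i-q_{i-1})$, simultaneously for three blocks and independently of $A$, via the Chinese remainder theorem and an averaging over shifts, is the technical heart of the proof, and your proposal contains no substitute for it.

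Two further problems. First, you never account for $A\cap I^c$: up to $\gamma\min(|I|,|J|)$ points of $A$ lie outside $I$, your blocks can only yield $|A\cap I|+|J|-1$, and the paper needs a separate argument (Lemma~\ref{lem27}) showing that $A\cap I^c$ contributes $|A\cap I^c|$ additional points outside $I+J$; omitting this again loses an amount of order $\gamma\min(|I|,|J|)$ that cannot be absorbed. Second, the geometry of your blocks is wrong: three consecutive sub-intervals of $J$ of length comparable to $|I|$ do not cover $J$ when $|J|\gg|I|$. In the paper the middle block $J_2$ occupies almost all of $J$ (with modulus $d_2\approx\min(|I|,|J|)/6$ and $k_2\approx |J|/d_2$ steps) and only the outer blocks have length $\Theta(\min(|I|,|J|))$; the purpose of the three-way split is to absorb the divisibility constraints $d_i\mid(q_i-q_{i-1})$, not to equalise block lengths, and the counts $\sum_i(|J_i|-1)=|J|-1$ only add up exactly because consecutive $J_i$ are chosen to overlap in a single point.
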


\begin{proof}
Fix $2^{-10}>\beta>0$. Let $\gamma=2^{-1000}\beta$. Fix $2^{-10}>\alpha>0$. Let $s=\lfloor 2^{10} \log (\alpha^{-1})\rfloor$. Let $c_1=(18^{13}\alpha^{-1}+1)(s+1)$. Let $c_0$ be the output of Lemma~\ref{change_c_again} with input $m=\lfloor 4\beta^{-1}\rfloor$ and $\mu=2^{-5}\alpha \beta$.  Let $c'=\max(c_0,c_1)$ and $c=5c'+2$. The proof will be divided into three lemmas.\\

First, note that if $A=I$ and $B=J$ then trivially there exists a set $B_0 \subset B$ (depending only on $I$ and $J$) such that
$|A+B_0| \geq |A|+|J|-1 \text{ and } |B_0| \leq c'$.
Hence, for the rest of the proof we may assume that $A \neq I$ or $B \neq J$, and so
\begin{equation}\label{eq25.0}
\min(|I|,|J|) \ge 1/\gamma \ \ \text{and} \ \ \beta \ge 2^{1000} \gamma.
\end{equation}
Moreover,  by passing to a subinterval $I'$ of $I$ and changing $\alpha$ to $2^{-1}\alpha$ and $\gamma$ to $2\gamma$ if needed, we may assume that the size of $|I|$ is a multiple of $6$.

\begin{lemma}\label{lem27}
We have
\begin{equation*}\label{eq25.01}
    \E_{B'\in B^{(c')}}\bigg|\bigg((A\cap I^c)+(B'\cup \{q_l,q_r\})\bigg)\cap \bigg(I+J\bigg)^c\bigg|\geq |A\cap I^c|.
\end{equation*}
\end{lemma}
\begin{proof}
Construct intervals $I_l$ and $I_r$ of size $\lfloor 2^{-1}\beta \min(|I|,|J|)  \rfloor$ to the left and right of the interval $I$: \ $I_l=p_l-[ 2^{-1}\beta \min(|I|,|J|) ] \text{ and } I_r= p_r+[ 2^{-1}\beta \min(|I|,|J|) ]$.
By \eqref{eq25.-2}, we can construct the disjoint unions
$K=I_l\sqcup I \sqcup I_r$
and $L=(I_l+q_l)\sqcup (I+J) \sqcup (I_r+q_r)$.
By \eqref{eq25.-2}, there exists a collection $\mathcal{J}$ of $\lfloor 4\beta^{-1} \rfloor$ intervals
$\mathcal{J}=\{J_1, \hdots, J_{\lfloor 4\beta^{-1} \rfloor}\}$
such that for every $J_j\in {\mathcal J}$ we have $J_j\subset J$ \ and \  $|J_j|= \lfloor 2^{-1}\beta \min(|I|,|J|)  \rfloor \geq 2^{10}$, \
and for every $x \in K^c$ there exists $j_x \in [ 4\beta^{-1}]$ such that
\begin{equation}\label{eq25.5}
(x+J_{j_x})\cap (I+J)=\emptyset.
\end{equation}
For $J_j \in {\mathcal J}$ set $B_j=J_j\cap B$.
Recalling \eqref{eq25.-2} and \eqref{eq25.-1}, we find that
\begin{equation}\label{eq25.55}
|B_j|\geq 2^{-3}\beta \min(|I|,|J|) \geq 2^{-5}\alpha \beta |B|.
\end{equation}

\vspace{10pt}
It is time to put together what we have learned about our sets. By construction we have
\begin{equation}\label{eq25.6}
\E_{B'\in B^{(c')}}\bigg|\bigg((A\cap I^c)+(B'\cup \{q_l,q_r\})\bigg)\cap \bigg(I+J\bigg)^c\bigg|
\end{equation}
\begin{equation*}\label{eq25.7}
\ge \bigg|A\cap I_l \bigg|+ \bigg|A\cap I_r\bigg|+ \E_{B'\in B^{(c')}}\bigg|\bigg((A\cap I^c)+B'\bigg)\setminus \bigg(\big[(A\cap I_l)+q_l\big]\cup \big[I+J\big]\cup \big[(A\cap I_r)+q_r\big]\bigg)\bigg|.
\end{equation*}
By Lemma~\ref{change_c_again} combined with \eqref{eq25.55} we have
\begin{equation}\label{eq25.8}
\E_{B'\in B^{(c')}}\bigg|\bigg((A\cap I^c)+B'\bigg)\setminus \bigg(\big[(A\cap I_l)+q_l\big]\cup \big[I+J\big]\cup \big[(A\cap I_r)+q_r\big]\bigg)\bigg|
\end{equation}
\begin{equation*}
\ge \frac{3}{4}\E_{\substack{b_j^1,b_j^2\in B_j \\ j \in [4\beta^{-1}]} }\bigg|\bigg((A\cap I^c)+\cup_j\{b_j^1,b_j^2\}\bigg)\setminus \bigg(\big[(A\cap I_l)+q_l\big]\cup \big[I+J\big]\cup \big[(A\cap I_r)+q_r\big]\bigg)\bigg|.
\end{equation*}
By \eqref{eq25.5} we have
\begin{equation}\label{eq25.10}
\E_{\substack{b_j^1,b_j^2\in B_j \\ j\in[4\beta^{-1}]} }\bigg|\bigg((A\cap K^c)+\cup_j\{b_j^1,b_j^2\}\bigg)\setminus \bigg(\big[(A\cap I_l)+q_l\big]\cup \big[I+J\big]\cup \big[(A\cap I_r)+q_r\big]\bigg)\bigg|
\end{equation}
\begin{equation*}\label{eq25.115}
\ge \E_{\substack{b_j^1,b_j^2\in B_j\\ j\in[4\beta^{-1}]} }\bigg|\bigg(\bigcup_{a\in A\cap K^c}a+\{b_{j_a}^1,b_{j_a}^2\}\bigg)\setminus \bigg(\big[(A\cap I_l)+q_l\big]\cup \big[(A\cap I_r)+q_r\big]\bigg)\bigg|.
\end{equation*}
A trivial bound gives
\begin{equation}\label{eq25.117}
\E_{\substack{b_j^1,b_j^2\in B_j \\ j\in[4\beta^{-1}]} }\bigg|\bigg(\bigcup_{a\in A\cap K^c}a+\{b_{j_a}^1,b_{j_a}^2\}\bigg)\setminus \bigg(\big[(A\cap I_l)+q_l\big]\cup \big[(A\cap I_r)+q_r\big]\bigg)\bigg|
\end{equation}
\begin{equation*}\label{eq25.13}
\ge \sum_{\substack{a \in A\cap K^c \\ i \in \{1,2\}}}\E_{\substack{b_j^1,b_j^2\in B_j\\ j\in[4\beta^{-1}]} }\bigg|\bigg(a+b_{j_a}^i\bigg)\setminus \bigg(\big[(A\cap I_l)+q_l\big]\cup \big[(A\cap I_r)+q_r\big] \cup \big[ \bigcup_{\substack{a' \in A\cap K^c \\ i' \in \{1,2\}: \\ j_{a'}\neq j_a \text{ or } i' \neq i }} a'+b_{j_{a'}}^{i'} \big] \bigg)\bigg|.
\end{equation*}

\vspace{10pt}
Continuing, by \eqref{eq25.-1}, given  $b_j^i\in B_j$ for  $i=1,2$ and $j \in [4\beta^{-1}]$, we have
\begin{equation*}\label{eq25.14}
    \bigg|  \big[(A\cap I_l)+q_l\big]\cup \big[(A\cap I_r)+q_r\big] \cup \big[ \bigcup_{\substack{a \in A\cap K^c \\ i \in \{1,2\} }} a+b_{j_{a}}^{i} \big] \bigg| \leq 2|A \cap I^c| \leq 2\gamma \min(|I|, |J|).
\end{equation*}
By \eqref{eq25.55} and \eqref{eq25.14} we have
\begin{equation}\label{eq25.15}
\sum_{\substack{a \in A\cap K^c \\ i \in \{1,2\}}}\E_{\substack{b_j^1,b_j^2\in B_j \\ j\in[4\beta^{-1}]} }\bigg|\bigg(a+b_{j_a}^i\bigg)\setminus \bigg(\big[(A\cap I_l)+q_l\big]\cup \big[(A\cap I_r)+q_r\big] \cup \big[ \bigcup_{\substack{a' \in A\cap K^c \\ i' \in \{1,2\}: \\ j_{a'}\neq j_a \text{ or } i' \neq i }} a'+b_{j_{a'}}^{i'} \big] \bigg)\bigg|
\end{equation}
\begin{equation*}\label{eq25.17}
\ge \frac{3}{4}2|A\cap K^c| = \frac{3}{2}|A\cap K^c|.
\end{equation*}
By \eqref{eq25.8}, \eqref{eq25.10}, \eqref{eq25.117} and \eqref{eq25.15} we get
\begin{equation}\label{eq25.18}
\E_{B'\in B^{(c')}}\bigg|\bigg((A\cap I^c)+B'\bigg)\setminus \bigg(\big[(A\cap I_l)+q_l\big]\cup \big[I+J\big]\cup \big[(A\cap I_r)+q_r\big]\bigg)\bigg|\geq \frac{9}{8}|A\cap K^c|.
\end{equation}
Finally, by \eqref{eq25.6} and \eqref{eq25.18} we obtain
\begin{equation*}\label{eq25.19}
\E_{B'\in B^{(c')}}\bigg|\bigg((A\cap I^c)+(B'\cup \{q_l,q_r\})\bigg)\cap \bigg(I+J\bigg)^c\bigg|\geq |A\cap I_l|+|A\cap I_r|+\frac{9}{8}|A\cap K^c|\geq |A\cap I^c|.
\end{equation*}

\vspace{5pt}
This concludes the proof of Lemma~\ref{lem27}.
\end{proof}

In the rest of the proof of Theorem~\ref{CD_technical} we shall only be interested in the subsets $A\cap I$ and $B=B\cap J$. Given that the intervals $I$ and $J$ satisfy $|I|+|J|<p$, we may assume from now on that the ambient space is $\mathbb{Z}$ rather than $\mathbb{Z}_p$.

\vspace{10pt}
Here is the second lemma we shall need.

\begin{lemma}\label{lem25}
For each $i=1,2,3$ there exists a parameter $d_i \in \mathbb{N}$ and there exist consecutive intervals $I_i=[p_{i-1}+1, p_i] \text{ and } J_i=[q_{i-1}, q_i]$,
depending only on $I$, $J$ and $B$ (but not on $A$), such that the sets $A_i=A\cap I_i \text{ and } B_i=B\cap J_i$
satisfy

{\rm (1)} \hspace{92pt} $I=I_1\sqcup I_2\sqcup I_3 \ \text{and }\ J=J_1 \cup J_2 \cup J_3$,

\vspace{8pt}
{\rm (2)} \hspace{115pt}     $6^{-13}\alpha |J_i| \leq |I_i| \leq 6^{13}\alpha^{-1} |J_i|$,

{\rm (3)} \hspace{75pt}  $\max\bigg(|I_i\Delta A_i|, |J_i\Delta B_i|\bigg) \leq 6^{13}\gamma \min (|I_i|, |J_i|)$,

{\rm (4)} \hspace{105pt}  $3d_i \leq \min (|I_i|, |J_i|) \leq (3^{10}+1)d_i$,

\vspace{8pt}
{\rm (5)} \hspace{150pt}  $d_i|(q_i-q_{i-1})$,

\vspace{8pt}
{\rm (6)} \hspace{97pt} $J_i\cap \{q_i \ \text{\rm mod }\ d_i \} = B_i\cap \{q_i \ \text{\rm mod }\  d_i \}$.

\end{lemma}

\begin{proof}
Fix parameters $k_{1}=2^{10} \text{ and } k_3=3^{10}$
and
\begin{equation}\label{eq25.249}
k_2=6\lfloor \frac{|J|}{\min(|I|,|J|)} \rfloor-1 \text{ and } d_2=\lfloor6^{-1}\min(|I|,|J|)\rfloor.
\end{equation}
Since \eqref{eq25.0} holds and  $|I|$ is a multiple of 6,  we have
\begin{equation}\label{eq25.2495}
|J|-6^{-1}\min(|I|,|J|)\geq k_2d_2\geq \max\bigg(|J|-2\min(|I|,|J|),2^{-1}\min(|I|,|J|)\bigg).
\end{equation}
By the Chinese remainder theorem, there exist parameters $q_{l}, q_{r}, d_1, d_3 \in \mathbb{Z}$ such that
$q_1=q_l+k_1d_1$, \   $q_{2}=q_r-k_3d_3$ \  and \ $q_2-q_1=k_2d_2$.
For $t\in \mathbb{Z}$ construct parameters \
$q_{0,t}=q_l$, \ $q_{1,t}=q_1+tk_1k_3$, \  $q_{2,t}= q_2+tk_1k_3$ \  and \ $q_{3,t}=q_r$,
and
$d_{1,t}=d_1+tk_3$, \  $d_{2,t}=d_2$ \ and \ $d_{3,t}=d_3-tk_1$
such that
$q_{1,t}=q_{0,t}+k_1d_{1,t}$, \   $q_{2,t}=q_{3,t}-k_3d_{3,t}$ \ and \ $q_{2,t}-q_{1,t}=k_{2}d_{2,t}=k_2d_2$.

Then the set $T=\{t\in \mathbb{Z} \text{ : }q_l=q_{0,t} <q_{1,t}<q_{2,t}<q_{3,t}=q_r \}$ is an interval of size
$|T|\geq \frac{|J|-2-k_2d_2}{k_1k_3}-1$.
Using our earlier inequalities, we find that $|T|\geq 6^{-12}\min(|I|,|J|)\geq 6$.
%Therefore, there exists a partition into consecutive disjoint intervals
Therefore, for  $i=1,2,3$ there are disjoint consecutive intervals
$T_i=[t_{i-1}+1, t_i]$ of sizes satisfying  $|T_2|=\lfloor 6^{-13}\min(|I|,|J|) \rfloor \geq 1$ and
$\min(|T_1|,|T_3|)\geq 6^{-13}\min(|I|,|J|)$,
which form a partition $T=T_1\sqcup T_2\sqcup T_3$.
We also have $|I|\geq 4\lfloor 2^{-2}\min(|I|,|J|) \rfloor \geq 4$.
%Therefore, there exists a partition into consecutive disjoint intervals
Therefore, for  $i=1,2,3$ there are disjoint consecutive intervals $I_i=[p_{i-1}+1,p_i]$ of sizes
$|I|\geq |I_2|\geq 2^{-1}|I|$ \ and \ $|I_1|=|I_3|=\lfloor 2^{-2} \min(|I|,|J|) \rfloor\geq 1$,
forming a partition \ $I=I_1\sqcup I_2 \sqcup I_3$.
Finally, for $i=1,2,3$ and $t\in T_2$  we take the overlapping consecutive intervals
$J_{i,t}=[q_{i-1,t},q_{i,t}],$
which form a cover \ $J=J_{1,t}\cup J_{2,t}\cup J_{3,t}$.

Now define functions                                                                                                                                                                   
\begin{equation}\label{eq25.29963}                                                                                                                                                 
g_1(i_1,t)=q_{0,t}+i_1 d_{1,t} \text{, } \ \ \  g_2(i_2,t)=q_{1,t}+i_2 d_{2,t} \ \ \  \text{and} \ \ \  g_3(i_3,t)=q_{3,t}-i_3 d_{3,t}.                                            
\end{equation}                                 
                                                                                                                                                                                   
\vspace{5pt}                                                                                                                                                                       
\noindent                                                                                                                                                                          
Form the partition $T_2=S\sqcup S^c$, where                                                                                                                                        
\begin{equation}\label{eq25.29965}                                                                                                                                                 
S=\{t\in T_2 : \ \  g_i(j_i,t)\in B \ \ \text{for all} \ \ i=1,2,3 \ \ \ and \ \ j_i \in [k_i]\}.                                                                                         
\end{equation}

It turns out that $S$ is non-empty, and that if we set, for any fixed $t \in S$, $J_i = J_{i,t}$, then the given
parameters and intervals do have the required properties (1)-(6).  
The proof of this is rather lengthy and tedious, and not very instructive. For this reason we
present the details in the Appendix.
\end{proof}

\vspace{5pt}
The final lemma we shall need in the proof of Theorem~\ref{CD_technical} is as follows.
\begin{lemma}\label{lem30}
Under the conditions of Lemma~\ref{lem25}, for $i=1,2,3$ we have
\[
\E_{\substack{w_j \in \mathbb{Z}_{d_i} \\ j\in[s] }} \bigg|A_i+(B_i^{q_i}\cup_{j\in [s]} B_i^{w_j}) \bigg| \geq |A_i|+|J_i|-1 \ \ \ {\text and} \ \ \
\max_{\substack{w_j \in \mathbb{Z}_{d_i} \\ j\in[s] }} \bigg|B_i^{q_i}\cup_{j\in [s]} B_i^{w_j} \bigg| \leq c'.
\]
\end{lemma}
\begin{proof}
Fix $i$. The proof is divided into two claims, but before we begin we need a simple remark. By condition $(5)$ we can write $q_i-q_{i-1}=k_id_i$ for some $k_i\in \mathbb{N}$. Moreover, by conditions $(2)$ and $(4)$ we have $k_i \leq 18^{13}\alpha^{-1}$. \\

\vspace{7pt}
{\bf Claim A. }{\em We have $\bigg|A_i+B_i^{q_i}\bigg| \geq |A_i|+k_i|\widetilde{A_i}|$
and
$\pi_{d_i}\bigg(A_i+B_i^{q_i}\bigg)=q_i+\widetilde{A_i}$.
}
\begin{proof}
By condition $(6)$  we have $|B_i^{q_i}|=k_i+1$. Also, if $x\in q_i+ \widetilde{A_i}$   then $|A_i^{x-q_i}| \geq 1$.
By Theorem~\ref{3k-4} it follows that $\bigg| (A_i+B_i^{q_i})^x\bigg|= |A_i^{x-q_i}|+|B_i^{q_i}|-1 =|A_i^{x-q_i}|+k_i$.
Hence
\begin{equation*}\label{eq30.6}
\bigg|A_i+B_i^{q_i}\bigg| \geq \sum_{x\in q_i+\widetilde{A}} \bigg|(A_i+B_i^{q_i})^x\bigg| \geq \sum_{y\in \widetilde{A_i}} |A_i^y|+k_i = |A_i|+k_i|\widetilde{A_i}|
\end{equation*}
completing the proof of Claim A.
\end{proof}

\vspace{7pt}
{\bf Claim B. }{\em   For  $z\in \mathbb{Z}_{d_i}$, we have
$\E_{\substack{w_j \in \mathbb{Z}_{d_i} \\ j\in[s] }} \bigg|(A_i+\cup_{j\in [s]} B_i^{w_j})^z \bigg| \geq k_i$. }

\begin{proof}
By condition (3) we have
\begin{equation*}\label{eq30.8}
\max\bigg(\sum_{w\in \mathbb{Z}_{d_i}} |I_i^w \setminus A_i^w|,  \sum_{w\in \mathbb{Z}_{d_i}} |J_i^w \setminus B_i^w| \bigg) \leq \max \bigg(|I_i\Delta A_i| , |J_i\Delta B_i| \bigg) \leq 6^{13}\gamma \min(|I_i|,|J_i|),
\end{equation*}
and condition (4) implies that
\begin{equation*}\label{eq30.9}
\max\bigg(\sum_{w\in \mathbb{Z}_{d_i}} |I_i^w \setminus A_i^w|,  \sum_{w\in \mathbb{Z}_{d_i}} |J_i^w \setminus B_i^w| \bigg) \leq 18^{13}\gamma d_i.
\end{equation*}
By condition $(4)$, if $w\in \mathbb{Z}_{d_i}$ then
$|I_i^w| \geq 2 \text{ and } |J_i^w| \geq k_i$, so
\begin{equation*}\label{eq30.11}
\max\bigg(\sum_{w\in \mathbb{Z}_{d_i}} \max(2-|A_i^w|,0),  \sum_{w\in \mathbb{Z}_{d_i}} \max(k_i- |B_i^w|, 0) \bigg) \leq 18^{13}\gamma d_i.
\end{equation*}
We conclude that the sets
$X=\{w\in \mathbb{Z}_{d_i} \text{ : } |A_i^{w}| \leq 1\}$ \  and  \  $Y=\{w\in \mathbb{Z}_{d_i} \text{ : } |B_i^{w}| \leq k_i-1\}$
are not too large: $\max(|X|,|Y|) \leq 18^{13}\gamma d_i \leq 3^{-1}d_i$.
Thus
$\Prob_{w\in \mathbb{Z}_{d_i}}\bigg( w\not \in Y \text{ and } z-w \not \in X \bigg) \geq 1/3$
and
$\Prob_{\substack{w_j\in \mathbb{Z}_{d_i} \\ j\in [s]}}\bigg( \exists j \text{ : } w_j\not \in Y \text{ and } z-w_j \not \in X \bigg) \geq 1-2^s3^{-s}$.
Equivalently, the event
\[
E=\bigg\{(w_1,\hdots,w_d) \in \mathbb{Z}_{d_i}^s \text{ : } \exists j \text{ : } w_j\not \in Y \text{ and } z-w_j \not \in Y \bigg\}
\]
has measure
\[
\Prob_{\substack{w_j\in \mathbb{Z}_{d_i} \\ j\in [s]}}\bigg((w_1, \hdots, w_s)\in E\bigg)\geq 1-2^s3^{-s}.
\]
By construction and by Theorem~\ref{3k-4}, if $(w_1,\hdots, w_s) \in E \subset \mathbb{Z}_{d_i}^s$
then
\begin{equation*}\label{eq30.19}
\bigg| (A_i+\cup_{j\in [s]} B_i^{w_j})^z\bigg| \geq \max_{j\in [s]} \bigg|A_i^{z-w_j}+B_i^{w_j} \bigg| \geq 2+k_i-1=k_i+1.
\end{equation*}
From the last two inequalities it follows that
\begin{equation*}\label{eq30.20}
\begin{split}
    \E_{\substack{w_j \in \mathbb{Z}_{d_i} \\ j\in[s] }} \bigg|(A_i+\cup_{j\in [s]} B_i^{w_j})^z \bigg| &\geq \E_{\substack{w_j \in \mathbb{Z}_{d_i} \\ j\in[s] }} \bigg( \bigg|(A_i+\cup_{j\in [s]} B_i^{w_j})^z \bigg| \text{  } \text{  } \text{ : } \text{  } \text{  } E\bigg) \Prob_{\substack{w_j\in \mathbb{Z}_{d_i} \\ j\in [s]}}\bigg((w_1, \hdots, w_s)\in E\bigg)\\
    &\geq (k_i+1)(1-2^{s}3^{-s}) \geq k_i,
\end{split}
\end{equation*}
where the last inequality follows from the estimates $k_i \leq 18^{13}\alpha^{-1}$, $s\geq 2^9\log \alpha^{-1}$ and $\alpha <2^{-{10}}$.
This finishes the proof of Claim B.
\end{proof}

Returning to the proof of Lemma~\ref{lem30}, we see that by
combining Claim A and Claim B we obtain the first part of what we want:
\begin{eqnarray*}
    \E_{\substack{w_j \in \mathbb{Z}_{d_i} \\ j\in[s] }} \bigg|A_i+(B_i^{q_i}\cup_{j\in [s]} B_i^{w_j}) \bigg| &\geq&     \sum_{z\in \mathbb{Z}_{d_i}}\E_{\substack{w_j \in \mathbb{Z}_{d_i} \\ j\in[s] }} \bigg|\bigg(A_i+(B_i^{q_i}\cup_{j\in [s]} B_i^{w_j})\bigg)^z \bigg|\\
    &\geq& \sum_{z\in q_i+\widetilde{A_i}} \bigg|(A_i+B_i^{q_i})^z \bigg|+\sum_{z\in (q_i+\widetilde{A_i})^c}\E_{\substack{w_j \in \mathbb{Z}_{d_i} \\ j\in[s] }} \bigg|(A_i+\cup_{j\in [s]} B_i^{w_j})^z \bigg|\\
    &\geq& \bigg|A_i+B_i^{q_i} \bigg|+\sum_{z\in (q_i+\widetilde{A_i})^c}\E_{\substack{w_j \in \mathbb{Z}_{d_i} \\ j\in[s] }} \bigg|(A_i+\cup_{j\in [s]} B_i^{w_j})^z \bigg|\\
    &\geq& |A_i|+|\widetilde{A_i}|k_i+(d_i-|\widetilde{A_i}|)k_i = |A_i|+d_ik_i=|A_i|+|J_i|-1.
\end{eqnarray*}

\vspace{3pt}
\noindent
Note also that for each $w\in \mathbb{Z}_{d_i}$, by construction we have
$|B_i^w| \leq k_i+1.$
Hence we have the second part
\begin{equation*}
    \max_{\substack{w_j \in \mathbb{Z}_{d_i} \\ j\in[s] }} \bigg|B_i^{q_i}\cup_{j\in [s]} B_i^{w_j} \bigg| \leq (s+1)(k_i+1) \leq c',
\end{equation*}
where last inequality follows from the bounds $k_i \leq 18^{13}\alpha^{-1}$, $s\leq 2^{10}\log \alpha^{-1}$ and $\alpha <2^{-{10}}$.
This concludes the proof of Lemma~\ref{lem30}.
\end{proof}

\vspace{5pt}
To finish the proof of Theorem~\ref{CD_technical} we shall construct the desired family of sets $\mathcal{F}$. We remark that we allow these families to have repeated sets. For each $i=1,2,3$ define
$\mathcal{F}_i=\{B_i^{q_i}\cup_{j\in [s]} B_i^{w_j} \text{ : } (w_1, \hdots, w_s) \in \mathbb{Z}^s_{d_i} \}.$
In addition, put
$\mathcal{F}_0=\{B_0\} \text{ and }\mathcal{F}_4=\{B'\cup\{q_l,q_r\} \text{ : } B'\in B^{(c')}\}.$
Finally, let
$\mathcal{F}=\{F_0\cup F_1\cup F_2\cup F_3\cup F_4 \text{ : } F_i\in \mathcal{F}_i\}.$
By construction (see  Lemmas~\ref{lem25} and \ref{lem30}, and the remark before Lemma~\ref{lem27}), the family $\mathcal{F}$ depends only on the sets $I$, $J$ and $B$ (but not $A$), and  any set $F\in \mathcal{F}$ has size
$|F| \leq 3c'+c'+c'+2\leq c. $
In the case $A=I$ and $B=J$,  we have
$\E_{B'\in \mathcal{F}}\big|A+B'\big| \geq   \big|A+B_0\big| \geq |A|+|J|-1.$

\vspace{5pt}
In the case $A\neq I$ or $B\neq J$, combining Lemmas~\ref{lem27} and \ref{lem30}, we obtain
\begin{equation*}
    \begin{split}
        \E_{B'\in \mathcal{F}}\bigg|A+B'\bigg| &\geq   \E_{\substack{B_i'\in \mathcal{F}_i \\ i \in [4]}}\bigg|A+\cup_{i\in[4]}B_i'\bigg|  \\
        &\geq \E_{B'\in B^{(c')}}\bigg|\bigg(A+(B'\cup \{q_l,q_r\})\bigg)\cap \bigg(I+J\bigg)^c\bigg|+\sum_{i\in [3]}\E_{B_i'\in \mathcal{F}_i}\bigg|(A+B_i')\cap (I_i+J_i) \bigg|\\
        &\geq \E_{B'\in B^{(c')}}\bigg|\bigg((A\cap I^c)+(B'\cup \{q_l,q_r\})\bigg) \cap \bigg(I+J \bigg)^c\bigg|+\sum_{i\in [3]}\E_{B_i'\in \mathcal{F}_i}\bigg|A_i+B_i' \bigg|\\
        &\geq |A\cap I^c|+ \sum_{i\in[3]}|A_i|+|J_i|-1
        \geq |A\cap I^c|+|A\cap I|+|J|-1
        \geq |A|+|J|-1,
    \end{split}
\end{equation*}
where the second inequality follows by considering the partition $\mathbb{Z}_p=(I+J)^c\sqcup_{i\in [3]} (I_i+J_i)$.
This finishes the proof of Theorem~\ref{CD_technical}.
\end{proof}

\vspace{5pt}
We are now ready to prove Theorem~\ref{strong_stab_B_Z_p}.

\begin{proof}[Proof of Theorem \ref{strong_stab_B_Z_p}]
  Fix $2^{-10} >\beta>0$. Let $\gamma$ be the output of Theorem~\ref{CD_technical} with input $\beta$, with the
  extra condition that $2^{-10} \beta \geq \gamma$. Let $\varepsilon$ be the output of Theorem~\ref{weak_stab_Zp} with input $\beta,\gamma$. Let $c_1$ be the output of Theorem~\ref{weak_stab_Zp} with input $\beta, \gamma, \alpha$. Let $c_2$ be the output of Theorem~\ref{CD_technical} with input $2^{-1} \beta, 2\alpha$. Finally, let $c=\max(c_1,c_2)$. \\

\vspace{5pt}
Combining Theorem~\ref{weak_stab_Zp} with inequalities \eqref{eq34.-2} and \eqref{eq34.-1}, we see that there exist arithmetic progressions $I$ and $J$ with the same common difference such that
$|A\Delta I| \leq \gamma \min(|A|,|B|)$, \ \ $|B\Delta J| \leq \gamma \min(|A|,|B|)$ \ \ and \ \ $|B\setminus J|=0$.
By scaling, we may assume that $I$ and $J$ are intervals $I=[p_l,p_r]$ and  $J=[q_l,q_r]$.
Moreover, we may assume that \ $\{q_l,q_r\} \subset B \subset J$.
Furthermore, we have
$2^{-1}\alpha |J| \leq |I| \leq 2 \alpha^{-1}|J|$ \  and  $|I|+|J| \leq (1-2^{-1}\beta)p$.
Theorem~\ref{CD_technical} and the relations we have just found imply that
there is a family $\mathcal{F} \subset B^{(c)}$ such that $\E_{B'\in \mathcal{F}}|A+B'|\geq |A|+|J|-1$.
Finally, recalling \eqref{eq34.-1}, we obtain $|A|+|J|-1 \leq |A|+|B|-1+r.$
Thus $J$ is an arithmetic progression of size $|B|+r$ containing $B$ (and so $r \geq 0$).
\end{proof}

\vspace{5pt}
We have thus proved our main results in ${\mathbb Z}_p$ since, as we remarked at the start
of the section, Theorem \ref{strong_stab_B_Z_p} contains both Theorems~\ref{translates_of_A_Zp}
and \ref{stability_in_Zp_for_intro} as special cases.

\section{Applications -- Restricted Sums}

\vspace{5pt}
We now turn to the applications mentioned earlier.
In a slight variant of earlier notation, given sets $A, B \subset {\mathbb Z}_p$ and a set ${\mathcal F} \subset A\times B$, the {\em ${\mathcal F}$-restricted sum of $A$ and $B$} is
\[
A+_{\mathcal F} B=\{a+b: \ (a,b) \not\in {\mathcal F} \},
\]
so that ${\mathcal F}$ is the set of {\em forbidden pairs}. These general ${\mathcal F}$-restricted sums were introduced and studied by Lev~\cite{Lev-00a, Lev-00b, Lev-01} over twenty years ago. (We hope that
the reader will not be too annoyed that the subscript now refers to the pairs we do {\em not} take, as opposed to the earlier notation of $A +_{\Gamma}B$: both of these notations are standard, unfortunately,
and we will use a curly letter in this section to emphasise the difference.)

\vspace{5pt}
For $b\in B$, we call $d(b)=\big|\{a: \ (a,b)\in {\mathcal F}\}\big|$ the {\em degree} of $b\in B$, and $d({\mathcal F})=\max_{b\in B} d(b)$ the {\em degree} of ${\mathcal F}$. Note that if ${\mathcal F} \subset A \times A$ then $d(a)$ is the number of pairs $(a', a)\in {\mathcal F}$, with $a$ in the {\em second} factor of $A\times A$.

\vspace{7pt}
We start with two questions. Firstly, can one extend the Erd\H{o}s--Heilbronn conjecture along the lines of Theorem~\ref{translates_of_A_Zp_equality},
our extension of the Cauchy--Davenport theorem? In other words, is there a constant $c$ such that if $p\ge 2n-3$ and $A$ is a subset of ${\mathbb Z}_p$ of size $n$, then there is a set $B'\in A^{(\le c)}$ such that $\{a+b: \ a\in A,\ b\in B', \ a\ne b\}$ has at least $2n-3$ elements?

\vspace{7pt}
Secondly, can one extend the Erd\H{o}s--Heilbronn conjecture in another direction, replacing the sum $A+A$ by an ${\mathcal F}$-restricted sum $A+_{\mathcal F} A$ for a suitable family ${\mathcal F}$? Is it true that if $0\le d \le n-1$, $p\ge 2n$ and we have $A \subset {\mathbb Z}_p$ of size $n$ and ${\mathcal F} \subset A \times A$ with
$d({\mathcal F}) \le d$, then the
${\mathcal F}$-restricted sum $A+_{\mathcal F} A$ has at least $2n-2d-1$ elements? The Cauchy--Davenport theorem is the case $d=0$, and the Erd\H{o}s--Heilbronn conjecture~\cite{ErdHeil}, made in 1964, and proved thirty years later
by Dias da Silva and Hamidoune~\cite{DiHa}, using tools from linear algebra and the representation
theory of the symmetric group, is the case $d=1$ with the very special family
${\mathcal F} = \{ (a,a): a \in A \}$. Subsequently Alon, Nathanson and Ruzsa~\cite{AlNaRu} gave a simpler
proof of the Erd\H{o}s--Heilbronn conjecture, which was based on the polynomial method. More recently, Vu and Wood~\cite{VuWood-09}  gave an entirely combinatorial proof under a weak additional condition.

\vspace{5pt}
Our next result shows that {\em both} extensions hold {\em simultaneously}, provided $p-2n$ is not too small and $d=d(n)$ does not grow too fast with $n$. As we shall see, the proof is a straightforward application of Theorem~\ref{stability_in_Zp_for_intro}. Note that of course this result implies Theorem~\ref{EH_for_intro} from the Introduction.

\begin{theorem}\label{EH-extension}
Let $(d (n))_1^{\infty}$ be a sequence of natural numbers with $d=d(n)=o(n)$, and let $\beta>0$. Then there are integers $c$
and $n_0$ such that the following holds.

\vspace{5pt}
Let $A\subset {\mathbb Z}_p$ with $n_0\le |A|=n \le (1 - \beta)p/2$, and let ${\mathcal F} \subset A \times A$ have degree $d$. Then there is a set $B'\in A^{(\le c)}$ such that $|A+_{\mathcal F} B'|\ge 2n-1-2d$.
\end{theorem}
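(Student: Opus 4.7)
The plan is to derive Theorem~\ref{EH-extension} directly from Theorem~\ref{stability_in_Zp_for_intro} applied with $B=A$ and $\alpha=1$, via a case split according to whether or not $A$ is close to an arithmetic progression.

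First I would extract from Theorem~\ref{stability_in_Zp_for_intro}, with parameters $\alpha=1$ and the given $\beta$, constants $c$ and $\varepsilon>0$. I then set $r=(c+2)d$; since $d(n)=o(n)$, I can choose $n_0$ large enough that $r\le \varepsilon n$ for every $n\ge n_0$, and also large enough that $2(n+r)<p$ (which follows from $n\le(1-\beta)p/2$ and $d=o(n)$). Note that the hypothesis $|A|+|B|\le(1-\beta)p$ of Theorem~\ref{stability_in_Zp_for_intro} reads, with $B=A$, exactly as $|A|\le(1-\beta)p/2$, which is what we are given.

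In the first case, suppose $A$ is not contained in any arithmetic progression of size $n+r$. The contrapositive of Theorem~\ref{stability_in_Zp_for_intro} then produces some $B'\in A^{(c)}$ with $|A+B'|\ge 2n+r$. Since $\mathcal{F}$ has degree at most $d$, the total number of sums $a+b_i$ (with $b_i\in B'$) that are forbidden by $\mathcal{F}$ is at most $cd$, and so
\[
|A+_{\mathcal{F}} B'|\ \ge\ |A+B'|-cd\ \ge\ 2n+(c+2)d-cd\ =\ 2n+2d\ \ge\ 2n-1-2d.
\]
In the second case, $A$ is contained in an arithmetic progression $P$ of size $n+r=n+O(d)$. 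Multiplying by the $\mathbb{Z}_p$-inverse of the common difference of $P$ is a group automorphism, under which all restricted sumset sizes are invariant (after the obvious relabelling of $\mathcal{F}$, which preserves its degree), so I may assume $P$ is an interval. Since $2|P|\le 2n+O(d)<p$, the sumset $P+P$ does not wrap around in $\mathbb{Z}_p$, and I may lift $A\subset P$ to $\mathbb{Z}$. Taking $b_1=\min A$ and $b_2=\max A$ with respect to the lift, the two translates $A+b_1\subset[2b_1,b_1+b_2]$ and $A+b_2\subset[b_1+b_2,2b_2]$ meet only at the single point $b_1+b_2$, and hence $|A+\{b_1,b_2\}|=2n-1$. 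Setting $B'=\{b_1,b_2\}$, the $\mathcal{F}$-forbidden sums cost at most $2d$, yielding $|A+_{\mathcal{F}} B'|\ge 2n-1-2d$.

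The only real bookkeeping is choosing $n_0$ large enough to guarantee both $r\le\varepsilon n$ and the no-wrap-around condition in Case~B; both follow easily from $d=o(n)$ together with $n\le(1-\beta)p/2$. There is no substantive obstacle, since all the heavy lifting has already been done by Theorem~\ref{stability_in_Zp_for_intro}, and the interval argument in Case~B is the same one that proves $|A+\{\min A,\max A\}|=2|A|-1$ for any finite $A\subset\mathbb{Z}$.
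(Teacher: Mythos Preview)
Your proposal is correct and follows essentially the same approach as the paper: apply Theorem~\ref{stability_in_Zp_for_intro} with $B=A$ and $\alpha=1$, and in the ``structured'' case take $B'=\{\min A,\max A\}$ in the lifted picture. The only cosmetic differences are that the paper uses $r=cd$ rather than $(c+2)d$, and phrases the dichotomy as ``either some $B'$ gives $|A+B'|\ge 2n+cd$, or (by the theorem) $A$ lies in a short progression'' rather than via the contrapositive; the content is identical.
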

\begin{proof}
Let $c\ge 2$ and $\varepsilon>0$ be the constants guaranteed by Theorem~\ref{stability_in_Zp_for_intro} for $\alpha=1$ and our $\beta >0$, and then let $n_0$ be such that if $n\ge n_0$ then $cd<  \min(\epsilon n, 2^{-1}\beta n)$. Finally, assume $n\geq n_0$.

\vspace{5pt}
If there is a set $B'\subset A$ with $|B'|=c$ such that $|A+B'|\ge 2n+r= 2n+cd$,
then this $B'$ will do for every ${\mathcal F}$, since
$|A+_{\mathcal F}B'|\ge 2n+cd-\sum_{b\in B'} d(b) \ge 2n+cd-|B'| d=2n$.
Otherwise, assume that
$\max_{B' \in B^{C}} |A+B'| \leq 2n-1+cd \leq 2n-1+\epsilon n$.

\vspace{5pt}
Applying Theorem~\ref{stability_in_Zp_for_intro} to the sets $A$ and $B=A$, we find that $A$ is contained in an arithmetic progression of size
$n+cd$. We may assume that the common difference of this arithmetic progression is 1, and we may also assume that
$A$ is contained in the interval $J=\{1, \dots , n+cd\}$ of ${\mathbb Z}_p$. Then $J+J$ has no wrap-around, so neither has $A+A$. Hence, writing $a_1$ for the first element of $A$ and $a_n$ for the last, the ${\mathcal F}$-restricted sum of $A$ and $B'=\{a_1, a_n\}$ has at least $2n-1-2d$ elements.
\end{proof}

\vspace{7pt}
The bound $2n-1-2d$ in Theorem~\ref{EH-extension} cannot be improved: this is the best bound not only for $|A+_{\mathcal F} B'|$ but also for $|A+_{\mathcal F}A|$, even if we demand that we have few forbidden pairs. Indeed, assuming, as we may, that $n>2 d$, this is shown by the set
\[
{\mathcal F}=\{(i, j): \ 1\le i, j \le d \ \ \text{or} \ \ n-d< i, j \le n\}
\]
of size only $d^2$ and the set $A=\{1, \dots , n\}$. Indeed, in this case the ${\mathcal F}$-restricted sum $A+_{{\mathcal F}} A$ of $A$ and $A$
is $\{d+2, \dots , 2n-d\}$.

\vspace{10pt}
Let us give another application of Theorem~\ref{stability_in_Zp_for_intro} to a restricted sum. This time we shall assume that the size of ${\mathcal F}$ is not too large. It is perhaps surprising that $|A+_{\mathcal F} A|$ can decrease considerably as we increase the number of forbidden pairs.
\begin{theorem}\label{restricted-by-small-graphs}
  Let $k=k(n)=o(\sqrt{n})$, $k\ge 2$, and $0<\beta < 1/2$. Let $c\ge 2$ and $0<\varepsilon < 1/2$ be the constants guaranteed by Theorem~\ref{stability_in_Zp_for_intro} for $\alpha=1$ and our $\beta$.
Then, for
 large enough $n$, whenever $A$ is a subset of ${\mathbb Z}_p$ with $|A|=n < (1 - \beta)p/2$ and
${\mathcal F} \subset A \times A$ satisfies $|{\mathcal F}|\le k(k-1)$, then
for some $B'\in A^{(\le c)}$ we have
\begin{equation}\label{size-restr-ineq}
|A+_{\mathcal F} B'|\ge 2n+1-2k.
\end{equation}
\end{theorem}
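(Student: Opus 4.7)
My plan is to imitate the two-case structure used in the proof of Theorem~\ref{EH-extension}: apply Theorem~\ref{stability_in_Zp_for_intro} (with $B = A$ and $\alpha = 1$) to dichotomise between a \emph{direct} regime, in which some $B' \in A^{(c)}$ already has $|A + B'|$ large enough to absorb the full cost $|\mathcal{F}| \le k(k-1)$, and a \emph{structural} regime, in which $A$ is forced into a short arithmetic progression that I then handle inside $\mathbb{Z}$. I set $r = k^2 - 3k + 1$; since $k = o(\sqrt{n})$, for $n$ large enough $r \le \varepsilon n$ (so the stability theorem applies) and $2(n + r) < p$ (so there is no wrap-around in $A + A$). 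In the direct regime some $B' \in A^{(c)}$ satisfies $|A + B'| \ge 2n + k^2 - 3k + 1$, and then the trivial bound
\[
|A +_{\mathcal{F}} B'| \;\ge\; |A + B'| - |\mathcal{F}| \;\ge\; 2n + k^2 - 3k + 1 - k(k-1) \;=\; 2n + 1 - 2k
\]
finishes things. Otherwise every $B' \in A^{(c)}$ has $|A + B'| \le 2n - 1 + r$; for $k = 2$ this would force (via Theorem~\ref{stability_in_Zp_for_intro} applied with $r = -1$) that $A$ lies in an arithmetic progression of size $n - 1$, which is absurd, so this regime is vacuous when $k = 2$. For $k \ge 3$ (so $r \ge 1$), the theorem places $A$ inside an arithmetic progression $J$ of size at most $n + r$; after rescaling by the inverse of the common difference I may take $J = \{1, \dots, |J|\} \subset \mathbb{Z}_p$, and the no-wrap-around condition lets me view $A$ as a subset of $\mathbb{Z}$.

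The main step is to produce the good $B'$ in this structural setting. Writing $A = \{a_1 < \cdots < a_n\}$, the basic observation is that for any $1 \le i < j \le n$ the sets $\{a_i + a_l : 1 \le l \le j\}$ and $\{a_j + a_l : i \le l \le n\}$ lie inside $A + \{a_i, a_j\}$, have sizes $j$ and $n - i + 1$, and meet only at $a_i + a_j$ (a second coincidence would require $a_l - a_m = a_j - a_i$ with $l < j$ and $m > i$, contradicting the ordering), so $|A + \{a_i, a_j\}| \ge n + j - i$. I then set $\phi(i, j) = i + (n - j) + d(a_i) + d(a_j)$ and average over $(i, j) \in [1, k] \times [n - k + 1, n]$: a short direct calculation gives
\[
\E_{(i,j) \in [1,k] \times [n-k+1, n]} \phi(i,j) \;=\; k + \frac{1}{k}\Bigl(\sum_{i=1}^{k} d(a_i) + \sum_{j=n-k+1}^{n} d(a_j)\Bigr) \;\le\; k + \frac{|\mathcal{F}|}{k} \;\le\; 2k - 1,
\]
where the crucial point is that the two index windows $[1,k]$ and $[n-k+1,n]$ are disjoint (this uses $n \ge 2k$, which holds for large $n$), so the two degree sums are bounded \emph{jointly} by $|\mathcal{F}|$. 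Picking $(i^*, j^*)$ with $\phi(i^*, j^*) \le 2k - 1$ and setting $B' = \{a_{i^*}, a_{j^*}\}$ yields
\[
|A +_{\mathcal{F}} B'| \;\ge\; (n + j^* - i^*) - d(a_{i^*}) - d(a_{j^*}) \;=\; 2n - \phi(i^*, j^*) \;\ge\; 2n + 1 - 2k,
\]
as needed.

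The main conceptual obstacle is that the trivial bound $|A +_{\mathcal{F}} B'| \ge |A + B'| - |\mathcal{F}|$ loses a quadratic $k(k-1)$ against the linear $2k - 1$ one actually needs, so an arbitrary $B'$ will not do: its two elements must together have low degree in $\mathcal{F}$. The averaging trick above resolves this by simultaneously trading the ``position cost'' $i + (n-j)$ of $\{a_i, a_j\}$ (how far the pair is from the extreme pair $\{a_1, a_n\}$) against the ``degree cost'' $d(a_i) + d(a_j)$, and the disjointness of the two index windows is exactly what stops this averaging from leaking a factor of $2$.
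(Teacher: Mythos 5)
Your proof is correct, and it follows the paper's overall strategy --- the dichotomy supplied by Theorem~\ref{stability_in_Zp_for_intro} with $B=A$ and $\alpha=1$, absorbing $|{\mathcal F}|$ by the trivial bound in the direct regime and passing to ${\mathbb Z}$ inside a short progression otherwise --- but the heart of the structural case is handled by a genuinely different argument. The paper sets $s=\min_i(i+d_i)$ and $t=\min_i(n+1-i+d_i)$, notes that $d_i\ge s-i$ and $d_{n+1-i}\ge t-i$ force $s(s-1)/2+t(t-1)/2\le|{\mathcal F}|\le k(k-1)$ and hence $s+t\le 2k$, and takes the two minimizers as $B'$; you instead average the same functional $\phi(i,j)=i+(n-j)+d(a_i)+d(a_j)$ over the corner window $[1,k]\times[n-k+1,n]$ and read off a good pair directly from $\E\,\phi\le k+|{\mathcal F}|/k\le 2k-1$, the disjointness of the two windows playing the role of the paper's observation that the indices $1,\dots,s-1$ and $n-t+2,\dots,n$ do not overlap. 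Both arguments produce a pair satisfying exactly the same inequality, so the final bound is identical; yours is the more transparent first-moment computation, while the paper's extremal version makes visible why the cost grows quadratically in the degrees (which is what the matching example after the theorem exploits). Two further points: your threshold $r=k^2-3k+1$ is the tight one (the paper uses the cruder cut $|A+B'|\ge 2n+k^2$, which costs nothing but keeps $r$ positive for all $k\ge 2$), and your treatment of $k=2$ via a negative value of $r$ is legitimate under the convention the paper adopts in Section~6 for Theorem~\ref{strong_stab_B_Z_p}, where $r$ is allowed to be negative and the conclusion is then vacuously false. The supporting details --- the bound $|A+\{a_i,a_j\}|\ge n+j-i$, the loss of at most $\sum_{b\in B'}d(b)$ in passing to the restricted sum, and the absence of wrap-around because $2(n+r)<p$ for large $n$ --- all check out.
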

\begin{proof}
Let $n$ be large enough to guarantee $k^2< \varepsilon n$ and $2k^2<\beta p$.
If there is a set $B'\subset A$ with $|B'|=c$ such that $|A+B'|\ge 2n+k^2$,
then \eqref{size-restr-ineq} follows.

\vspace{5pt}
Otherwise, by Theorem~\ref{stability_in_Zp_for_intro}, the set $A$ is contained in an arithmetic progression of size $n+k^2$. Assume, as we may, that $A=\{a_1, \dots , a_n\}\subset \{1, \dots , n+k^2\}$, with $a_i<a_{i+1}$. Since $2(n+k^2)<p$, the sum $A+A$ has no `wraparound'. Set $d_i=d(a_i)$, $s=\min_{1\le i \le n} \big(i+d_i\big)$ and $t=\min_{1\le i \le n} \big(n+1-i+d_i\big)$. Then, crudely, $s<n/2<t$,
\[
d_1\ge s-1, \ \ \ \ \ \ d_2 \ge s-2, \ \ \ \ \dots , \ \ \ \ d_{s-1}\ge 1,
\]
and
\[
d_n\ge t-1, \ \ \ d_{n-1} \ge t-2, \ \ \ \dots , \ \ \ d_{n-t+2}\ge 1.
\]
This implies that
\[
s(s-1)/2+t(t-1)/2 \le \sum_1^n d_i= |{\mathcal F}| \le k(k-1),
\]
so $s+t\le 2k$.

\vspace{5pt}
Finally, if $j$ and $h$ are suffices where the minima are attained then, using $\{a_j, a_h\}$ for the second summand,
\begin{eqnarray*}
|A+_{\mathcal F} \{a_j, a_h\}|&\ge& |A+ \{a_j, a_h\}|-d_j-d_h\\
     &\ge&2n+1-j-d_j-(n+1-h)-d_h\\
     &\ge&2n+1-s-t\ge 2n+1-2k.
\end{eqnarray*}
This completes our proof.
\end{proof}

\vspace{5pt}
The bound in Theorem~\ref{restricted-by-small-graphs} cannot be improved. Indeed, let $A=[n]$ and
\[
{\mathcal F}=\{(i,j)\in A\times A: \ i+j\le k \ \ \ \text{or} \ \ \ i+j\ge 2n-k+2\}.
\]
Then $\big|{\mathcal F}\big|=k(k-1)$ and $A+_{\mathcal F} A=[k+1, 2n-k+1]$.

\vspace{7pt}
It is not impossible that the bounds we have imposed on $d(n)$ in Theorem~\ref{EH-extension}, and on $k(n)$ in Theorem~\ref{restricted-by-small-graphs} are unnecessary. We shall state these as conjectures in the last section.

%%%%%%%%%%%%%%%%%%%%%%%%%%%%%%%%%%%%%%%%%%%%%%%%%%%%%%%%%%%%%%%%%%%%%

\section{Applications -- Sums of Sets of Reals}

As we shall see now, our results in the discrete world have easy corollaries in the continuous world. Throughout this section we shall denote by $|\cdot|$ the Lebesgue measure on the Euclidean space $\mathbb{R}$ and Haar measure (normalized Lebesgue measure) on the circle $\mathbb{T}=\mathbb{R}/\mathbb{Z}$. Our starting point is the triviality that if $A$ and $B$ are non-empty compact sets of reals then $|A+B| \ge |A|+|B|$.

The deduction of these corollaries from the corresponding theorems is straightforward. First one reduces the statement from general compact sets to finite unions of closed intervals. Then one reduces the statement to discrete sets by taking the intersection with a sufficiently fine lattice. Let us show, for example, how one can deduce Corollary~\ref{cont_intro_three_translates_Z} from Theorem~\ref{intro_three_translates_Z}.

\begin{proof}[Proof of Corollary~\ref{cont_intro_three_translates_Z}]
We first reduce Corollary~\ref{cont_intro_three_translates_Z} to the case when $A$ and $B$ are finite unions of intervals and $|A|>|B|$.

Fix compact sets $A$ and $B$ in $\mathbb{R}$ and construct nested sequences of sets $A_n$ and $B_n$ in $\mathbb{R}$, all of which are finite unions of closed intervals, $|A_n| \ge |B_n|$, $\cap_n A_n = A$ and $\cap_n B_n = B$.

To prove
$$\max_{b_1,b_2,b_3\in B}|A+\{b_1,b_2,b_3\}|\geq |A|+|B|$$
given that
$$\max_{b_1^n,b_2^n,b_3^n \in B_n} |A_n+\{b_1^n,b_2^n,b_3^n\}| \geq |A_n|+|B_n| \geq |A|+|B|,$$
it suffices to show that
$$\max_{b_1,b_2,b_3 \in B} |A+\{b_1,b_2,b_3\}|\geq \lim_{n \rightarrow \infty} \max_{b_1^n,b_2^n,b_3^n \in B_n} |A_n+\{b_1^n,b_2^n,b_3^n\}|.$$

For this, note that the points $b_1^n,b_2^n,b_3^n$ are contained inside the compact set $B_1$. Therefore, we can restrict to convergent subsequences $b_1^{n_i} \rightarrow b_1^0$, $b_2^{n_i} \rightarrow b_2^0$ and $b_3^{n_i} \rightarrow b_3^0$. As the sets $B_i$ are compact, it follows that $b_1^0,b_2^0,b_3^0 \in \cap_i B_{n_i}=B$. To conclude observe that the function $f(x_1,x_2,x_3)=|A+\{x_1,x_2,x_3\}|$ is continuous and that for all $x_1,x_2,x_3$ we have $|A+\{x_1,x_2,x_3\}|\geq |A_n+\{x_1,x_2,x_3\}|-3|A_n \Delta A|$.

\vspace{5pt}
Now we further reduce Corollary~\ref{cont_intro_three_translates_Z} to Theorem~\ref{intro_three_translates_Z}.
Fix sets $A$ and $B$ in $\mathbb{R}$ that are finite unions of intervals and $|A|>|B|$, and construct sets $A_n=A\cap n^{-1}\mathbb{Z}$ and $B_n=B\cap n^{-1}\mathbb{Z}$.

To prove
\[
\max_{b_1,b_2,b_3\in B}|A+\{b_1,b_2,b_3\}|\geq |A|+|B|
\]
given that
\[
\max_{b_1^n,b_2^n,b_3^n \in B_n} |A_n+\{b_1^n,b_2^n,b_3^n\}| \geq |A_n|+|B_n|-1,
\]
it suffices to show that, for $n$ large, $|A_n+\{b_1^n,b_2^n,b_3^n\}| =n|A+\{b_1^n,b_2^n,b_3^n\}|-O(1)$, \
$|A_n|=n|A|+O(1)$ \ and \ $|B_n|=n|B|+O(1)$.
For this, recall that $A$ is a finite union of intervals and for an interval  $X$ it is trivial that $|X|=|X\cap n^{-1}\mathbb{Z}| +O(1)$. This concludes the proof.
\end{proof}

The proof of Corollary~\ref{cont_translates_of_A_Zp} from Theorems~\ref{translates_of_A_Zp} and \ref{intro_general_three_translates_Zp}
follows identical lines. Finally, we mention that there
is also a continuous version of Theorem~\ref{implication_shao1-prov}, again proved by the same method.

\begin{corollary}\label{cont_implication_shao1-prov}
For all $K$ and $\varepsilon >0$ there is an integer $c$ such that for all $d$ the following holds.
Let $A$ and $B$ be non-empty compact subsets of the torus $\mathbb{T}^d$. Then there are compact subsets $A^* \subset A$
and $B^* \subset B$, with $|A^*|\ge (1-\varepsilon)|A|$ and
$|B^*|\ge (1-\varepsilon)|B|$, such that if we select points $b_1,\hdots,b_c$ uniformly at random from
$B^*$ then
\[
\ \ \ \ \ \ \ \ \ \  \E |A^*+ \{b_1,\hdots,b_c\}|
\geq \min\big( (1-\varepsilon)|A^*+ B^*|,\ K |A^*|,\ K |B^*| \big). \ \ \ \ \ \ \ \ \ \ \square
\]
\end{corollary}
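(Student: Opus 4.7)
The plan is to follow the discretization strategy used to deduce Corollary~\ref{cont_intro_three_translates_Z} from Theorem~\ref{intro_three_translates_Z}, with extra care needed to handle the expectation. By inner regularity, I will first approximate the compact sets $A, B \subset \mathbb{T}^d$ from within by finite unions of closed cubes of a common side length $1/n$ for large $n$, losing at most an $\eta$-fraction of measure. These unions are naturally identified with subsets $A_n, B_n$ of the finite abelian group $G_n = (\mathbb{Z}/n\mathbb{Z})^d$ satisfying $|A_n|/n^d \geq (1-\eta)|A|$ and $|B_n|/n^d \geq (1-\eta)|B|$. I then apply Theorem~\ref{implication_shao1-prov} to $A_n, B_n$ with parameters $\varepsilon' \ll \varepsilon$ and $K$ to obtain discrete subsets $A_n^* \subset A_n$, $B_n^* \subset B_n$ with the discrete expectation bound. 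Lifting back, let $A^* = \bigcup_{\alpha \in A_n^*} C_\alpha \subset A$ and $B^* = \bigcup_{\beta \in B_n^*} C_\beta \subset B$, where $C_\alpha$ denotes the closed $1/n$-cube labelled by $\alpha$. The required measure bounds $|A^*| \geq (1-\varepsilon)|A|$ and $|B^*| \geq (1-\varepsilon)|B|$ follow by choosing $\eta$ and $\varepsilon'$ small enough.

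To transfer the expectation bound, I will write
\[
\E_{b_1, \ldots, b_c \in B^*}|A^* + \{b_1, \ldots, b_c\}| = \int_{\mathbb{T}^d}\bigl(1 - (1-r(x))^c\bigr)\,dx,
\]
where $r(x) = |B^*|^{-1}|B^* \cap (x - A^*)|$. For $x \in C_\gamma$, a direct computation shows that $r(x)$ is a convex combination of the discrete densities $r_d(\gamma - \epsilon) = |B_n^*|^{-1}|B_n^* \cap (\gamma - \epsilon - A_n^*)|$ for $\epsilon \in \{0,1\}^d$, with position-dependent weights that average to $2^{-d}$ over $C_\gamma$. Convexity of $t \mapsto (1-t)^c$ followed by integration yields the crude comparison $\E_{\text{cont}} \geq n^{-d}\E_{\text{disc}}$, which by Theorem~\ref{implication_shao1-prov} is at least $n^{-d}\min\bigl((1-\varepsilon')|A_n^* + B_n^*|_{\text{disc}},\, K|A_n^*|,\, K|B_n^*|\bigr)$. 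The last two terms equal $K|A^*|$ and $K|B^*|$ and so handle two of the three needed bounds directly.

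The main obstacle is the Minkowski-sum term. The continuous Minkowski sum satisfies $|A^* + B^*| = n^{-d}|A_n^* + B_n^* + \{0,1\}^d|$, which can exceed $n^{-d}|A_n^* + B_n^*|_{\text{disc}}$ by a factor of up to $2^d$, because the Minkowski sum of two $1/n$-cubes is a $2/n$-cube covering $2^d$ lattice cubes. To overcome this, I exploit the freedom in choosing $c$: by dominated convergence on the support $\{r > 0\} = A^* + B^*$, the integral $\int(1 - (1-r(x))^c)\,dx$ approaches $|A^* + B^*|$ as $c \to \infty$. The rate of convergence can be made uniform in $A, B$ because the proof of Theorem~\ref{implication_shao1-prov}, by way of Theorem~\ref{thm_shao}, in fact produces sets for which $r(x) \geq \alpha(K)$ on a $(1 - \varepsilon/2)$-fraction of $A^* + B^*$, where $\alpha > 0$ depends only on $K$. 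Choosing $c = c(K, \varepsilon, d)$ of order $\alpha^{-1}\log(1/\varepsilon)$ then yields $\E_{\text{cont}} \geq (1-\varepsilon)|A^* + B^*|$ and completes the proof.
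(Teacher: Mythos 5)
The paper gives no actual proof of this corollary (it only says "proved by the same method", referring to the discretization used for Corollaries~\ref{cont_intro_three_translates_Z} and \ref{cont_translates_of_A_Zp}), so your job was to make that one-liner precise; your core identity $\E|A^*+\{b_1,\dots,b_c\}|=\int(1-(1-r(x))^c)\,dx$ and the convexity comparison of $r(x)$ on a cube $C_\gamma$ with the discrete densities at its $2^d$ corners are both correct. But there are two genuine gaps. First, your opening step is false: a compact set of positive measure need not contain any cube at all (a product of fat Cantor sets), so inner regularity does \emph{not} let you approximate $A$ and $B$ from within by finite unions of cubes. The paper's discretizations go the other way, via nested \emph{outer} approximations; since the present statement asserts the existence of subsets of $A$ and $B$, you would have to approximate from outside by unions of cubes, apply the discrete theorem there, and then intersect the resulting unions of cubes back with $A$ and $B$, checking that the measures, the sampling distribution on $B^*$ and the sizes of the unions of translates each move by only $O(\eta)$. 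This is repairable but is missing as written.

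Second, and more fundamentally, your treatment of the Minkowski-sum term violates the quantifier structure of the corollary: the statement demands a single integer $c=c(K,\varepsilon)$ valid \emph{for all} $d$, whereas you end by "choosing $c=c(K,\varepsilon,d)$", and the $d$-dependence is not removable from your argument. Even granting the strengthening of Theorem~\ref{implication_shao1-prov} you invoke (that $r_d\ge\alpha(K,\varepsilon)$ on a $(1-\varepsilon/2)$-fraction of the discrete sumset --- which is extractable from the proof via the popular set $C_j^+$ but is not the stated theorem), the continuous density at $x\in C_\gamma$ is $r(x)=\sum_\epsilon w_\epsilon(u)r_d(\gamma-\epsilon)$ with $w_\epsilon(u)=\prod_j u_j^{\epsilon_j}(1-u_j)^{1-\epsilon_j}$, and $-\log w_\epsilon(u)$ is a sum of $d$ independent exponentials, so $w_\epsilon(u)$ is typically of size $e^{-d}$. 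On a cube having only one popular corner one therefore needs $c\gtrsim e^{d}\alpha^{-1}\log(1/\varepsilon)$ before $\int_{C_\gamma}(1-(1-r)^c)$ gets within $\varepsilon$ of $|C_\gamma|$, and nothing prevents Theorem~\ref{implication_shao1-prov} from outputting sets $A_n^*,B_n^*$ for which essentially every cube of $A^*+B^*$ is of this kind (this is exactly the regime where $|A^*+B^*|\approx 2^d n^{-d}|A_n^*+B_n^*|$). A dimension-free $c$ seems to require abandoning cube discretization here and instead running the proof of Theorem~$8'$ verbatim in $\mathbb{T}^d$ --- the popular/unpopular decomposition of the convolution of the indicator functions of $A$ and $B$, iterated with a continuous form of Theorem~\ref{thm_shao} --- since every estimate in that proof is dimension-free.
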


\section{Open Problems and Conjectures}

\vspace{5pt}
Perhaps the most intriguing open problem is the question of whether, in
Theorem~\ref{translates_of_A_Zp}, the number
of translates can be taken to be three, if the sizes of $A$ and $B$ are a sufficiently small
multiple of $p$. We
believe that this should be the case.

\vspace{5pt}
{\bf Conjecture 1. }{\em
There exists $0< \alpha < 1/2$ such that whenever
$A$ and $B$ are subsets of $\mathbb{Z}_p$ with $|A|=|B|\leq \alpha p$,
there exist $b_1, b_2, b_3 \in B$ such that $|A+\{b_1,b_2,b_3\}| \ge |A|+|B|-1.$
}

\vspace{5pt}
In terms of inverse results, we have not considered any questions about inverse forms of our starting theorem about
three translates in $\mathbb{Z}$, Theorem~\ref{intro_three_translates_Z}. We will address this in a forthcoming paper.

%\vspace{5pt}
%{\bf Conjecture 3. }{\em
%For every $\delta >0$ there exists $\varepsilon > 0$ such that
%the following is true. Suppose that $A$ and $B$ are finite subsets of
%${\mathbb Z}$ of equal size $n$ such that for any
%elements $b_1,b_2,b_3 \in B$
%we have $$|A+\{b_1,b_2,b_3\}| \leq (2+ \varepsilon) n - 1.$$ Then there exist arithmetic progressions $P,Q$ in
%$\mathbb{Z}$ with the same common difference such that $$B\subset Q \text{ and } |A \Delta P|,
%|B\Delta Q| \leq \delta n.$$
%}
%We remark that if Conjecture 3 is proved then one can show that
%it would actually yield a strengthening of
%itself in which $|B \Delta Q| \leq (1+o(1))\varepsilon n$.
%However, it is important to note, for Conjecture 3, that the selection of three elements {\it cannot} be
%done as in Theorem
%\ref{intro_three_translates_Z}, by taking the maximum and minimum elements of $B$ plus one more
%element. Indeed, this may be seen by taking $B=[0,n]$ and $A=X \cup (X+n)$ where $X$ is a random set of density $1/2$ in an% interval of length $n$.

\vspace{5pt}
It would also be interesting to know if the influence of the structure of $B$ on our results is
independent of scaling. For example, we suspect that the following is the case.

\vspace{5pt}
{\bf Conjecture 2. }{\em
  Let integers $c$ and $n$  and a set $B\subset {\mathbb Z}$ be fixed. Suppose for every set
$A\subset {\mathbb Z}$ of size $n$ there are
$c$ elements $b_1, \dots , b_c \in B$ such that $|\cup_{i=1}^c (A+b_i)|\ge m$. Then the same holds if
$B$ is replaced by
$\lambda B$ for any integer $\lambda$.
}

\vspace{10pt}
Turning to restricted sums, the main question is whether the results in Section 7 hold without the perhaps
artificial restrictions on the value of $d(n)$, the maximum degree of the set of forbidden pairs.

\vspace{5pt}
{\bf Conjecture 3. }{\em
  For every $\beta>0$ there
  are constants $c$ and $n_0$ such that the following holds. If $A$ is a subset of ${\mathbb Z}_p$ with
  $n_0 \le |A|=n < (1-\beta)p/2$, and ${\mathcal F} \subset A \times A$ with $d({\mathcal F}) \le d$, then there is a set $B'\in A^{(\le c)}$ such that the ${\mathcal F}$-restricted sum $A+_{{\mathcal F}} B'$ has at least $2n-2d-1$ elements.
}

\vspace{10pt}
Theorem~\ref{EH-extension} tells us that this conjecture holds for $d=d(n)=o(n)$, and
pedestrian arguments can be used to prove it for $d=n-1, n-2$ and $n-3$. A very
interesting case would be $d=\lfloor n/2 \rfloor$, say.

\vspace{10pt}
This conjecture has a good many variants, stronger forms and weaker: here we shall state only some.

\vspace{10pt}
Firstly, does Conjecture 3 actually hold for $n_0=1$ ?
Secondly, does it hold for two sets, $A, B \in {\mathbb Z}_p$ with $|A|=|B|=n$, with
${\mathcal F} \subset A \times B$ having $d({\mathcal F}) \le d$?
It is important to note that such questions are unknown even if we drop the condition that we are passing to a subset $B'$ of $B$. Indeed, the case $d=1$ (for general sets $A$ and $B$ with $|A|=|B|=n$) is
a beautiful open problem of Lev~\cite{Lev-00b}.
Thirdly, does Conjecture 3 hold for
sets $A$ and $B$ of comparable sizes? And what happens if the sizes are incomparable?
Fourthly, what is the minimal value of $c$ for which Conjecture 3 and its relatives hold?

\vspace{10pt}
We mention that there are many questions about how the various parameters in the results relate to each
other: it would be very nice to find out the correct dependences. For example, how does $c$ depend on $\alpha$ and $\beta$ in Theorem~\ref{translates_of_A_Zp}? How does $\alpha$ depend on $\beta$ in
Theorem~\ref{intro_general_three_translates_Zp}? How small is $\varepsilon$ in
Theorem~\ref{stability_in_Zp_for_intro}? We do not see how to answer any of these questions.

\vspace{5pt}
Turning again to ${\mathbb Z}$, could it be that Theorem~\ref{intro_three_translates_Z} holds
even when $A$ is slightly smaller than $B$?

\vspace{5pt}
{\bf Question 4. }{\em
  When $A$ and $B$ are subsets of ${\mathbb Z}$ with $|B|=n$, how much smaller than $n$ can $|A|$
  be if we can still guarantee that $\max_{b_1,b_2,b_3 \in B}\bigg|A+\{b_1,b_2,b_3\}\bigg| \ge
 |A|+|B|-1$?
  }

\vspace{5pt}
Another interesting avenue of research would be to study how large a set $A + B'$ can
be in terms not of the minimum sumset bound but of the actual sumset size $|A+B|$. In other words,
how much do our bounds improve if we happen to know that $A+B$ is larger than its theoretical
minimum value? There is no chance of a result in this precise form, since $|A+B|$ may be large because $A$ has a small subset scattered in the ground set, so that $|A+B|$ is large because of a tiny subset of $A$. Note that Theorem~\ref{implication_shao1-prov} is a result in this direction. But could the
following be true? If so, what is the dependence of $c$ on $\varepsilon$ and $\lambda$?

\vspace{5pt}
{\bf Question 5. }{\em
Given constants $\varepsilon >0$ and
$\lambda >0$, is there a constant $c>0$ such that the following assertion
holds?
If $A, B \subset {\mathbb Z}_p$ with $|A|=|B|=n$ and $|A+B|\le \lambda
n$ then there are sets $A'\subset A$ and $B'\subset B$ such that $|A'|\ge
(1-\varepsilon)n$, $|B'|\le c$ and $|A'+B'|\ge (1-\varepsilon) |A'+B|$.
}

\vspace{5pt}
It might even be possible that this holds in all abelian groups. In general, as in Section 8 many of our
results in
${\mathbb Z}_p$ do of course tranfer to results in the circle ${\mathbb T}$, but for other abelian groups (continuous or
discrete) the situation is still very open.

\vspace{5pt}
Finally, there are many questions that one could ask in higher dimensions, so concerning the groups ${\mathbb Z}_p^d$
or ${\mathbb T}^d$. Perhaps the cleanest question is in the continuous torus.

\vspace{5pt}
{\bf Conjecture 6. }{\em
For each dimension $d$ there is a constant $c$ such that the following holds. Whenever
$A$ and $B$ are non-empty compact subsets of $\mathbb{T}^d$ with $|A|=|B| \leq 1/3$, there
exist $b_1, \hdots, b_c \in B$ such that $|A+\{b_1, \hdots, b_c\} |\geq |A|+|B|$.
}

\vspace{5pt}

\section{Appendix 1: Proofs of some results from Section 3}

We collect here the proofs of three results from Section 3.

\begin{proof}[Proof of Theorem~\ref{equality_four_translates}]
For $|A|=|B|\le 3$ this is trivial, so we may assume that $|A|=|B|\ge 4$.
Since the minimum
$
\min_{b \in B\setminus \{m\}}\bigg|A+\{0,b,m\}\bigg|=\bigg|A\cup (A+m)\bigg|
$
is attained at $b=0$, we have
$
\E_{b \in B \setminus \{0,m\}} \bigg|A +\{0,b,m\}\bigg| \geq \E_{b \in B \setminus \{m\}} \bigg|A+\{0,b,m\}\bigg|,
$
with equality if and only if
$
\bigg|A+\{0,b,m\}\bigg|=\bigg|A\cup (A+m)\bigg|
$
for every $b\in B$. By \eqref{thm1.0.0},
\begin{equation}\label{thm23.0.1}
    \E_{b \in B \setminus \{0,m\}} \bigg|A + \{0,b,m\}\bigg| \geq \E_{b \in B \setminus \{m\}} \bigg|A+ \{0,b,m\}\bigg|\ge |A|+|B|-1
\end{equation}
and, by our assumption \eqref{thm23.0.0}, both inequalities in \eqref{thm23.0.1} are equalities. Consequently,
\[
\bigg|A+\{0,b,m\}\bigg| = \bigg|A\cup (A+m)\bigg|=|A|+|B|-1
\] for every $b\in B$ and so
$A+B \subset A\cup (A+m) \subset A+B$ and
$|A+B| = |A|+|B|-1$.
Since $|A|=|B| \geq 4$,  it follows, for example by Freiman's $3k-4$ theorem (Theorem~\ref{3k-4}), that $A$ and $B$ are arithmetic progressions with the same difference.
\end{proof}

\begin{proof}[Proof of Theorem~\ref{technical_three_translates_Z}]
The proof is based on Lemma~\ref{lem8.1} and Lemma~\ref{lem8.2specialcase}, which combine to give
\begin{align*}
\E_{b\in B \setminus \{m\}}\bigg|A+\{0,b,m\}\bigg| &\geq
|A|+|\widetilde{A}| +|A| \max\bigg(0,\frac{|\widetilde{B}| - |\widetilde{A}|}{|\widetilde{B}|}\bigg)\\
&=|A|+|\pi_m(A)|+|A|\max\bigg(0, \frac{|B|-1-|\pi_m(A)|}{|B|-1} \bigg).
\end{align*}
If we assume that $|A|\geq |B|-1$ and $|\pi_m(A)| \geq |B|-1$ then we have
\begin{equation*}
    \E_{b\in B \setminus \{m\}}\bigg|A+\{0,b,m\}\bigg| \geq |A|+|\pi_m(A)|\\
    \geq |A|+|B|-1.
\end{equation*}
If we assume that $|A|\geq |B|-1$ and $|\pi_m(A)| \leq |B|-1$ then we have
\begin{align*}
    \E_{b\in B \setminus \{m\}}\bigg|A+\{0,b,m\}\bigg| &\geq |A|+|\pi_m(A)|+ |A|\frac{|B|-1-|\pi_m(A)|}{|B|-1}\\
    &\geq |A|+|\pi_m(A)|+|B|-1-|\pi_m(A)|=|A|+|B|-1.
\end{align*}
Finally, if we assume $|A|\leq |B|-1$, which implies $|\pi_m(A)| \leq |B|-1$, then we obtain
\[
    \E_{b\in B \setminus \{m\}}\bigg|A+\{0,b,m\}\bigg| \geq |A|+|\pi_m(A)|+ |A|\frac{|B|-1-|\pi_m(A)|}{|B|-1}
    = 2|A|+|\pi_m(A)|\frac{|B|-1-|A|}{|B|-1}.
\]

\vspace{5pt}
This concludes the proof of Theorem~\ref{technical_three_translates_Z}.
\end{proof}

\begin{proof}[Proof of Theorem~\ref{strengthening_of_three_translates}]

We begin with a construction and continue with some preliminary estimates. Partition $A$ into two parts,
$A=A_1\sqcup A_2 $,
such that
$\widetilde{A_2}=\widetilde{A} \text{ and }  |A_2|=|\widetilde{A}| \text{ and } |A_1|=|A|-|\widetilde{A}|.$
Lemma~\ref{lem8.1} tells us that $\pi_m\bigg(A\cup (A+m)\bigg)= \widetilde{A}$
and $\bigg|A\cup (A+m) \bigg| \geq |A|+|\widetilde{A}|$, and Lemma~\ref{lem8.2} gives
\[
\E_{b\in B \setminus \{m\}} \bigg|(A_1+b) \setminus \pi_m^{-1}(\widetilde{A})\bigg| \geq
(|A|-|\widetilde{A}|) \max\bigg(0,\frac{|\widetilde{B}|-|\widetilde{A}|}{|\widetilde{B}|}\bigg).
\]
Noting the bijection $\pi_m(B\setminus\{m\})=\widetilde{B},$
for $z \in \mathbb{Z}_d$ we have
\begin{eqnarray*}
    \Prob_{b \in B \setminus\{m\}}\bigg((A_2 +b) \cap \pi_m^{-1}(z)  \neq \emptyset \bigg)
    &=&\Prob_{\widetilde{b} \in \widetilde{B}}\bigg(z \in \widetilde{b}+\widetilde{A_2}\bigg)\\
    &=&\Prob_{\widetilde{b} \in \widetilde{B}}\bigg(z \in \widetilde{b}+\widetilde{A}\bigg)
    \geq \max\bigg(0,\frac{|\widetilde{B}| + |\widetilde{A}|-m}{|\widetilde{B}|}\bigg).
\end{eqnarray*}
It follows that
\begin{eqnarray*}
\E_{b \in B \setminus\{m\}}\bigg|(A_2 +b) \setminus \pi_m^{-1}(\widetilde{A})\bigg|
&=& \sum_{z\not\in \widetilde{A}}\Prob_{b \in B \setminus\{m\}}\bigg((A_2 +b) \cap \pi_m^{-1}(z)
\neq \emptyset \bigg)\\
    &\geq& (m-|\widetilde{A}|)\max\bigg(0,\frac{|\widetilde{B}| + |\widetilde{A}|-m}{|\widetilde{B}|}\bigg).
\end{eqnarray*}
Now, by Theorem \ref{technical_three_translates_Z},  we may assume that
\begin{equation*}
  \frac{(2|\pi_m(A)|-m)(m-(|B|-1))-1}{|B|-1}  > 0.
\end{equation*}
In particular, we have $2|\pi_m(A)|>m$, i.e.  $2|\widetilde{A}|>m$.
Since $|\widetilde{B}|+1\geq |\widetilde{A}|$, the last two
inequalities imply
\[
\frac{|\widetilde{B}| + |\widetilde{A}|-m}{|\widetilde{B}|} \geq 0.
\]
The inequalities in this proof can be put together to imply\[
\E_{b\in B \setminus \{m\}}\bigg|A+\{0,b,m\}\bigg|
      \geq |A|+|B|-1+\max\bigg(0,\frac{(2|\pi_m(A)|-m)(m-(|B|-1))-1}{|B|-1} \bigg),
\]
completing the proof of Theorem~\ref{strengthening_of_three_translates}.
\end{proof}

\section{Appendix 2: Proof of the conditions (1)-(6) in Lemma~\ref{lem25}}

Here we show that the construction given in the proof of Lemma~\ref{lem25} does indeed have the
claimed properties, as well as showing that the set $S$ is non-empty, as claimed.

\vspace{7pt}
{\bf Claim A. }{\em  Given $t\in S$, for $i=1,2,3$ the parameters $d_{i,t}$ and the consecutive intervals $I_{i}$ and $J_{i,t}$ satisfy conditions (1)-(5).
}
\begin{proof}
%Fix $t\in T_2$. The sets $I_i=[p_{i-1}+1, p_i]$ and $J_{i,t}=[q_{i-1, t}, q_{i, t}]$ are indeed consecutive intervals. Conditions $(1)$ and $(5)$ follow directly from our construction. Conditions $(2)$, $(3)$ and $(4)$ are straightforward computational checks.

\vspace{7pt}
\noindent
Condition (1) is immediate from the construction.

For $i=1$ or $i=3$, we have $|J_{i,t}|\leq |J|-|J_{2,t}|+1$ and $|J_{i,t}|\leq |J|-k_2d_2$. From this we find that
$|J_{i,t}|\leq 2\min(|I|,|J|)$,
and so we can deduce the first part of condition $(2)$: $|J_{i,t}|\leq 2^4|I_i|$.

\vspace{7pt}
\noindent
For $i=2$, $|J_{2,t}|\leq |J|$,
which leads to the first part of condition $(2)$: $|J_{2,t}|\leq 2\alpha^{-1}|I_2|$.

\vspace{7pt}
\noindent
Returning to  $i=1$ or $i=3$,  we find that the function $q_i(x)=q_{i,x}: \  \mathbb{Z} \rightarrow \mathbb{Z}$ is strictly increasing. As $|q_{i,t}-q_{{i-1},t}| \geq |T_i|$, we find that $|q_{i,t}-q_{{i-1},t}| \geq 6^{-13}\min(|I|,|J|)$.

\vspace{7pt}
\noindent
Since $|J_{i,t}|=q_{i, t}-q_{i-1, t}+1 \geq 6^{-13}\min(|I|,|J|)$,
we obtain the second part of condition $(2)$:
$|J_{i,t}|\geq 6^{-13}|I_i|$.

\vspace{7pt}
\noindent
For $i=2$,  we have $|J_{2,t}|=q_{2,t}-q_{1,t}+1=k_2d_2+1 \ge  2^{-1}\min(|I|,|J|)$, which implies
the second part of condition $(2)$: \ $|J_{2,t}|\geq 2^{-1}\alpha |I_2|$.

\vspace{7pt}
\noindent
For $i=1,2,3$,  we have $|J_{i,t}|\geq 6^{-13}\min(|I|,|J|)$ and by $|I_{i}|\geq 2^{-3}\min(|I|,|J|)$.
Furthermore,
$|I_{i}\Delta A_i| \leq |A\Delta I|$ and  $|J_{i,t}\Delta B_{i,t}|\leq |J\Delta B|$,
and so
\[
\max\bigg(|I_{i}\Delta A_i| , |J_{i,t}\Delta B_{i,t}|\bigg)\leq \gamma \min(|I|,|J|).
\]
From the last four inequalities we obtain condition $(3)$:
\[
\max\bigg(|I_{i}\Delta A_i| , |J_{i,t}\Delta B_{i,t}|\bigg)\leq 6^{13}\gamma \min(|I_i|,|J_i|).
\]

\vspace{7pt}
We now turn to condition (4). For $i=1,2,3$, we have $q_{i,t}-q_{i-1,t}+1=k_id_{i,t}+1$ and
$d_{i,t} \geq 1$. Consequently,
\[
(k_i+1)d_{i,t}\geq q_{i,t}-q_{i-1,t}+1\geq k_id_{i,t},
\]
which implies
\begin{equation}\label{eq25.29957}
(k_i+1)d_{i,t}\geq|J_{i,t}|\geq k_id_{i,t}.
\end{equation}

\vspace{10pt}
\noindent
Since $k_1=2^{10}$ and $k_3=3^{10}$, for $i=1,2,3$ we have
\[
(3^{10}+1)d_{i,t}\geq |J_{i,t}|=2^{10}d_{i,t}, \ \ \text{and so} \ \ |I_{i}|\geq 2^{6}d_{i,t}.
\]
Also, if $i=2$  then $k_2d_{2,t}\geq 2^{-1}\min(|I|,|J|)$.
This tells us that, on the one hand, $|J_{2,t}|\geq 2^{-1}\min(|I|,|J|)$, and on the other,
$|I_{2}|\geq 2^{-1}\min(|I|,|J|)$.
Additionally, $d_{2,t}=\lfloor 6^{-1}\min(|I|,|J|)\rfloor. $
From the last three inequalities we conclude that
\[
\min(|I_2|,|J_{2,t}|)\geq 3d_{2,t}.
\]
From \eqref{eq25.0} we may also deduce that
$6^2d_{2,t}\geq \min(|I|,|J|)$, and so
\[
6^2d_{2,t}\geq\min(|I_2|,|J_2|).
\]
This completes the proof of Claim A.
\end{proof}

Now we turn to the proof that $S$ is non-empty, and that condition (6) holds.

\vspace{7pt}

We start by observing that the functions $g_1, g_3$ are strictly increasing in the second coordinate, and also that $g_2$ is injective. Indeed,                                   
recall that  $T_2$ is an interval of size $|T_2|\leq 6^{-10}d_2=k_1^{-1}k_3^{-1}d_2$, and $d_{2,t}=d_2 \neq 0$. It is easily checked that for $t,t'\in T_2$ we have                
$q_{1,t}=q_{1,t'} \text{ mod }d_{2,t}$ if and only if $t=t'$.                                                                                                                      
Note that $q_{1,t}=g_2(i_2,t) \text{ mod } d_{2,t}$. It follows that for $t,t'\in T_2$ we have $g_2({i_2,t})=g_2({i_2',t'}) \text{ mod }d_{2,t}$                                   
if and only if $t=t'$. Moreover, if $t=t'$ then, as $d_{2,t}=d_2\neq 0$, it follows that $g_2(i_2,t)=g_2(i_2',t)$ if and only if $i_2=i_2'$.                                      
                                          
Let us note the following three inequalities. First,
$|S^c| \leq \sum_{i\in[3]} |g_i^{(-1)} (J\setminus B)|$,
second, $|g_i^{(-1)} (J\setminus B)| \leq k_i|J\setminus B|$,
and third,  for $i=2$ we have
$|g_2^{(-1)} (J\setminus B)| \leq |J\setminus B|$.
From these three inequalities we deduce that $|S^c|\leq (k_1+k_3+1)|J\setminus B|$,
and so, by \eqref{eq25.-1}, we have
$|S^c|\leq \gamma (k_1+k_2+1) \min(|I|,|J|)$.
Finally, the choice of $k_1$ and $k_3$ implies that $S$ is non-empty.

\vspace{10pt}
{\bf Claim B. }{\em  Given $t\in S$, for each $i=1,2,3$ the parameters $d_{i,t}$ and intervals $J_{i,t}$ satisfy condition (6).
}

\begin{proof}
Our task is to check that
\[
\bigcup_{i\in[3]} [q_{i-1,t},q_{i,t}] \cap \{q_{i,t} \text{ mod } d_{i,t} \} \subset B.
\]
Note the following four identities:
\[
[q_{0,t}, q_{1,t}] \cap \{q_{1,t} \text{ mod } d_{1,t}\}= \{q_{0,t} + id_{1,t} \text{ : }i \in [0,k_1] \},
\]
\[
[q_{1,t},q_{2,t}] \cap \{q_{2,t} \text{ mod } d_{2,t}\}= \{q_{1,t} + id_{2,t} \text{ : }i \in [0,k_2] \},
\]
\[
[q_{2,t},q_{3,t}] \cap \{q_{3,t} \text{ mod } d_{3,t}\}= \{q_{3,t} - id_{3,t} \text{ : }i \in [0,k_3] \},
\]
and
\[
q_{0,t}+k_1d_{1,t}=q_{1,t}+0d_{2,t}.
\]
By \eqref{eq25.29963} and \eqref{eq25.29965} it follows that
$$ \bigcup_{i\in[3]} [q_{i-1,t},q_{i,t}] \cap \{q_{i,t} \text{ mod } d_{i,t} \} \setminus \{q_{0,t}, q_{3,t}\} \subset B.$$
Finally, by  \eqref{eq25.-1} and the definition of the parameters $q_{i,t}$, we have
\[
\{q_{0,t}, q_{3,t}\}=\{q_l,q_r\} \subset B.
\]
This establishes Claim B.
\end{proof}

\vspace{5pt}
This finishes the proof of Lemma~\ref{lem25}, since 
by Claim A, Claim B and the fact that $S$ is non-empty we conclude that there exists $t$ for which the parameters $d_{i,t}$ and the consecutive intervals $I_i$ and $J_{i,t}$
satisfy conditions $(1)$-$(6)$. Moreover, it is straightforward to check that each step of the construction is independent of the set $A$.

\end{document}